\tikzset{>={Latex[width=1mm,length=1.2mm]}}
\renewcommand\thesection{\arabic{section}}
\renewcommand\thesubsection{\arabic{subsection}}
\renewcommand\proofname{Proof}
\renewenvironment{proof}[1][\textit{\proofname}]{\par
 \pushQED{\qed}%
 \normalfont \topsep.75\paraskip\relax
 \trivlist
 \item[\hskip\labelsep
 \itshape
 #1\@addpunct{.}]\ignorespaces
}{%
 \popQED\endtrivlist\@endpefalse
}
\declaretheoremstyle[
bodyfont=\itshape,]{mystyle}
\declaretheorem[name=Lemma, style=mystyle, numberwithin=section]{Lemma}
\declaretheorem[name=Proposition, style=mystyle, sibling=Lemma]{Proposition}
\declaretheorem[name=Theorem, style=mystyle, sibling=Lemma]{Theorem}
\declaretheorem[name=Corollary, style=mystyle, sibling=Lemma]{Corollary}
\declaretheorem[name=Definition, style=mystyle, sibling=Lemma]{Definition}
\declaretheorem[name=Example, style=mystyle, sibling=Lemma]{Example}
\declaretheorem[name=Remark, style=mystyle, sibling=Lemma]{Remark}
\declaretheoremstyle[numbered=no, 
bodyfont=\itshape]{mystyle-empty}
\declaretheorem[name=Lemma, style=mystyle-empty]{Lemma*}
\declaretheorem[name=Proposition, style=mystyle-empty]{Proposition*}
\declaretheorem[name=Theorem, style=mystyle-empty]{Theorem*}
\declaretheorem[name=Corollary, style=mystyle-empty]{Corollary*}
\declaretheorem[name=Definition, style=mystyle-empty]{Definition*}
\declaretheorem[name=Example, style=mystyle-empty]{Example*}
\declaretheorem[name=Remark, style=mystyle-empty]{Remark*}
\newskip\paraskip
\newcounter{para}[section]
\renewcommand\thepara{\thesection.\arabic{para}}
\def\paragraph{%
\vspace{1ex}
 \noindent
 \refstepcounter{para}%
 \textbf{\thepara.}\hspace{1ex}%
}
\newcommand\about[1]{%
 {\bfseries#1.}%
}
\newcommand\NN{\mathbb N}
\newcommand\CC{\mathbb C}
\newcommand\QQ{\mathbb Q}
\newcommand\RR{\mathbb R}
\newcommand\ZZ{\mathbb Z}
\renewcommand\to{\rightarrow}
\renewcommand\phi{\varphi}
\newcommand\h{\mathfrak h}
\newcommand\m{\mathfrak m}
\newcommand\gl{\mathfrak{gl}}
\renewcommand\sl{\mathfrak{sl}}
\newcommand\vv{\overline{v}}
\newcommand\II{\mathbb I}
\newcommand\GT{\mathsf{GT}}
\renewcommand\P{\mathcal P}
\newcommand\interval[1]{\llbracket #1 \rrbracket}
\newcommand\abs[1]{|#1|}
\DeclareMathOperator\Specm{Specm}
\DeclareMathOperator\ess{ess}
\DeclareMathOperator\supp{supp}
\DeclareMathOperator\essupp{essupp}
\newcommand\DD{\mathbb D}
\newcommand\D{\mathfrak D}
\newcommand\soc{\mathrm{soc}}
\title{Bounds of Gelfand-Tsetlin multiplicities and tableaux realizations of 
Verma modules}
\date{}
\author[V. Futorny]{Vyacheslav Futorny}
\address{Instituto de Matem\'atica e Estat\'istica, Universidade de S\~ao
Paulo, S\~ao Paulo SP, Brasil} \email{futorny@ime.usp.br,}
\author[D. Gratcharov]{Dimitar Grantcharov}
\address{\noindent
University of Texas at Arlington, Arlington, TX 76019, USA} \email{
grandim@uta.edu}
\author[L.E. Ramirez]{Luis Enrique Ramirez}
\address{Universidade Federal do ABC, Santo Andr\'e-SP, Brasil} \email{
luis.enrique@ufabc.edu.br,}
\author[P. Zadunaisky]{Pablo Zadunaisky}
\address{Universidad CAECE and Universidad de Buenos Aires 
-- Buenos Aires, Argentina} \email{pzadunaiskybustillos@caece.edu.ar}
\begin{document}
\begin{abstract}
We introduce the notion of essential support of a simple Gelfand-Tsetlin $\gl_n$-module as an attempt towards understanding the character formula of such module. This support detects the weights in the module having maximal possible Gelfand-Tsetlin multiplicities. Using combinatorial tools we describe the essential supports of the simple socles of the universal tableaux modules. We also prove that every simple Verma module appears as the socle of a universal tableaux module. As a consequence, we prove the Strong Futorny-Ovsienko Conjecture on the sharpness of the upper bounds of the Gelfand-Tsetlin multiplicities. We also give a very explicit description of the support and essential support of the simple singular Verma module $M(-\rho)$.

\end{abstract}

\maketitle

\noindent\textbf{MSC 2010 Classification:} 16G99, 17B10.\\
\noindent\textbf{Keywords:} Gelfand-Tsetlin modules, Gelfand-Tsetlin bases, 
reflection groups, Verma modules.


\section{Introduction}

Gelfand-Tsetlin modules of the complex general linear Lie algebra $\gl(n, \CC)$ 
have been studied since the 1950's both by mathematicians and physicists. 
These modules admit a locally finite action of the Gelfand-Tsetlin subalgebra 
$\Gamma$, a maximal commutative subalgebra of the universal enveloping algebra 
$U(\gl(n))$. The recent discovery of Gelfand-Tsetlin derivative tableaux 
in \cite{FGR16} initiated the systematic study of singular Gelfand-Tsetlin 
modules. This theory attracted considerable attention in the last three years 
and many interesting and important results have been obtained in \cites{EMV18, 
FGR17a, FGR17b, FGRZ18, FRZ16a, Hartwig18, MV18, RZ18, Vis17, Vishnyakova18, Zad-1-sing}. Singular
Gelfand-Tsetlin modules turned out to be related to different but overlapping 
theories. For example, connections with Schubert calculus were discovered 
in \cite{FGRZ18} and with tensor product categorifications and 
KLRW algebras in \cite{KTWWY18}. With the aid of KLRW algebras, in \cite{KTWWY18}, the authors provide a bijection between the set of simple Gelfand-Tsetlin $\gl(n, \CC)$-modules with a fixed character and the zero weight space of an $\mathfrak{sl}(n, \CC)$-crystal.
Furthermore, the properties of the singular Gelfand-Tsetlin modules have been 
studied with combinatorial tools, \cites{FGR16, FGRZ18, FRZ16a, RZ18}, as 
well as with geometric methods, \cites{EMV18, MV18, Vishnyakova18}. 

A maximal ideal $\m$ of $\Gamma$ defines a point $v = v_\m$ in $ \mathbb 
C^{\mu}=\mathbb C^{1}\times \mathbb C^{2}\times \cdots \times \mathbb C^{n}$ 
up to a permutation of coordinates. Given such $v$ one constructs a  
\emph{"universal" tableaux Gelfand-Tsetlin module} $V(T(v))$ which contains a 
simple Gelfand-Tsetlin subquotient $M$ having $\m$ in its support, i.e. $M[\m]
\neq 0$ (see \S\ref{subsec-gt-def} for precise definitions). It was shown in 
\cites{EMV18,RZ18} that $V(T(v))$ has a basis of \emph{derivative tableaux}, 
and the action of $\gl(n,\CC)$ on this basis was described in \cite{FGRZ18} 
in terms of BGG differential operators and Postnikov-Stanley polynomials. We 
conjecture that the module $V(T(v))$ is universal in the sense that every 
simple Gelfand-Tsetlin module having $\m$ in its support is a subquotient of 
$V(T(v_\m))$. This conjecture was proven for generic $v$ in \cite{FGR15} and 
for $1$-singular $v$ in \cites{FGR16, FGR17b}.

In the present paper we make a significant step in the understanding of the 
structure of $V(T(v))$, in particular its socle. As a generating vector we 
choose a special vector $\vv\in \mathbb C^{\mu}$, called a \emph{seed} (see 
Definition \ref{D:seed}), and show that
the module $V(T(v)) = V(T(\vv))$ has a simple socle $V_{\soc}$ whose structure 
can be described in terms of certain oriented graphs. The simple module 
$V_{\soc}$ is also a Gelfand-Tsetlin module such that $V_{\soc} =\bigoplus_{z} 
V_{\soc} [\vv+z]$ where the sum is taken over a certain set of points of 
$\mathbb C^{\mu}$ with integral coordinates. The dimensions of the weight 
spaces $V_{\rm soc} [\vv+z]$ (called Gelfand-Tsetlin multiplicities) are 
finite and uniformly bounded as explained in more detail below.

Set $S_{\mu}:=S_1\times \cdots \times S_n$ and consider the free abelian 
group $\ZZ_0^\mu$ consisting of elements in $\mathbb C^{\mu}$ with integer 
coordinates the last $n$ of which equal  zero. Denote by $\GT$ 
the category of all Gelfand-Tsetlin $\gl(n,\CC)$-modules, and for each equivalence class 
$\zeta \in \CC^\mu / (\ZZ_0^\mu \# S_\mu)$ denote by $\GT_\zeta$ the full 
subcategory of $\GT$ consisting of modules whose support is contained in 
$\zeta$. We have a decomposition of $\GT$ into a direct sum of components
\begin{align*}
\GT &= \bigoplus_{\zeta \in \CC^\mu / (\ZZ_0^\mu \# S_\mu)} \GT_\zeta
\end{align*}
in the sense that $\operatorname{Ext}^i_\GT(M,N) = 0$ for all $i \geq 0$ and 
for any $M$ and $N$ in different components (see \cite{FO14}*{Corollary 3.4}). 

An upper bound for the Gelfand-Tsetlin multiplicities of any simple 
Gelfand-Tsetlin module was found in \cite{FO14}*{Theorem 4.12(c)}. To write this bound, 
fix a seed $\vv$  and consider the stabilizer $S_{\pi(\vv)}$ of 
$\vv$ in $ S_\mu$. Let $z\in\ZZ_0^\mu$ be such that $\vv+z$ is in normal form 
(see Definition \ref{def-nor-form}) and let $(S_{\pi(\vv)})_z$ be the 
stabilizer of $z$ in $S_{\pi(\vv)}$. Set $\zeta=\zeta_{\vv} =\CC^\mu / (\ZZ_0^\mu \# S_\mu) 
\vv$.  Then, as shown in \cite{FO14}, for any simple 
module $M$ in $\GT_\zeta$ the upper bound on a Gelfand-Tsetlin multiplicity
is given by
\begin{equation}\label{FO-inequality} 
\dim M[\vv+z]
\leq \frac{|S_{\pi(\vv)}|}{|(S_{\pi(\vv)})_z|}.
\end{equation} 
We will refer to (\ref{FO-inequality}) as the \emph{FO inequality}. In 
\cite{FO14}*{Remark 5.4} Futorny and Ovsienko conjectured that this inequality
is sharp, and more precisely, that there is a simple module $M$ for which
equality holds in (\ref{FO-inequality}) for some  $z$ with trivial 
stabilizer. This conjecture follows from either \cite{FGRZ18}*{Theorems 8.3, 
8.5} or \cite{EMV18}*{Theorems 10,11} for $z$ with any stabilizer.
In fact, the results in \cite{EMV18} and \cite{FGRZ18} imply that 
(\ref{FO-inequality}) holds for $M=V(T(\vv))$. In the special case where none 
of the differences between entries in consecutive rows of $\vv$ are integers, 
$V(T(\vv))$ is simple. 

In this paper we prove a stronger result, which we call the \emph{Strong 
Futorny-Ovsienko Conjecture}. We show that the socle of $V(T(\vv))$ is 
simple, and that taking $M$ to be this socle, the set of all $z \in \ZZ^\mu_0$ 
such that equality holds in (\ref{FO-inequality}) (the \emph{essential support}
of $M$) is a union of rational polyhedral cones, at least one of which has
dimension $n(n-1)/2$. We also show that the subgroups $(S_{\pi(\vv)})_z$ 
run over all parabolic subgroups of $S_{\pi(\vv)}$. This shows that the FO 
inequality gives a sharp bound in each subcategory $\GT_\zeta$.

In the following theorem we summarize the above discussed results. 
\begin{Theorem} 
Let $\vv$ be a seed in $\mathbb C^{\mu}$ and $\zeta=\zeta_{\vv}$. Then the 
following hold. 
\begin{itemize}
\item[(i)]
The module $V(T(\vv))$ has a simple socle $V_{\rm soc}$.

\item[(ii)] 
The Strong Futorny-Ovsienko Conjecture holds for $V_{\rm soc}$, i.e. the 
essential support of $V_{\rm soc} $ is nonempty. Moreover, this essential 
support consists of the integral points of a finite union of polyhedral 
rational cones, at least one of which is of maximal possible rank, 
$\frac{n(n-1)}{2}$.

\item[(iii)] 
The maximal Gelfand-Tsetlin multiplicity of a character in $\GT_\zeta$ is
$|S_{\pi(\vv)}|$, and this is attained at the socle $V_{\soc}$. 

\item[(iv)] 
For any $\vv+z$ in the essential support of $V_{\soc}$, the module 
$V_{\rm soc}$ is the unique simple Gelfand-Tsetlin module having $\vv+z$ in 
its support.

\item[(v)] 
For any parabolic subgroup $G \subset S_{\pi(\vv)}$, the quotient 
$\frac{|S_{\pi(\vv)}|}{|G|}$ appears as a Gelfand-Tsetlin multiplicity of the 
module $V_{\soc}$. 
\end{itemize}
\end{Theorem}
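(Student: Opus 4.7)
The plan is to build the proof around the combinatorial structure of derivative tableaux and the explicit action of $\gl(n,\CC)$ on $V(T(\vv))$ via BGG differential operators and Postnikov-Stanley polynomials from \cite{FGRZ18}. To establish (i), I would first associate to each weight space of $V(T(\vv))$ an oriented graph whose vertices are the derivative tableaux of that weight and whose edges record the raising/lowering operators that are generically nonzero on them. Since $\vv$ is a seed, these graphs have a distinguished terminal component: the subset of tableaux that annihilate all further lowering operators. Let $V_{\soc}$ be the submodule generated by the corresponding tableaux in each weight space. I would then check that any nonzero submodule $N \subseteq V(T(\vv))$, upon repeated application of lowering operators and chasing through the graphs, must meet this terminal component in a $\Gamma$-stable way, hence contain $V_{\soc}$. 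Simplicity of $V_{\soc}$ follows because its defining graphs are strongly connected under the remaining (raising) operators.

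For (ii), the essential support of $V_{\soc}$ is computed by analyzing, for each $z \in \ZZ_0^\mu$ in normal form, which derivative tableaux of weight $\vv+z$ survive in $V_{\soc}$. The vanishing conditions for Postnikov-Stanley polynomials translate into linear inequalities on the entries of $z$, so the locus where equality holds in the FO inequality is cut out by a finite union of rational polyhedral cones. To produce a cone of maximal dimension $n(n-1)/2$ I would exhibit an explicit $z$ with strictly ordered, sufficiently separated entries in each row so that $(S_{\pi(\vv)})_z$ is trivial and every coset representative in $S_{\pi(\vv)}$ contributes a linearly independent tableau; perturbing $z$ within this chamber preserves the conditions, giving the full $\tfrac{n(n-1)}{2}$ degrees of freedom. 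Parts (iii) and (v) are then obtained by specialization: (iii) follows by taking $z$ in this maximal cone, where $\dim V_{\soc}[\vv+z] = |S_{\pi(\vv)}|$ matches the bound exactly; (v) follows by moving $z$ to the appropriate face of a cone, where the stabilizer becomes a prescribed parabolic $G \subset S_{\pi(\vv)}$ and the computation yields $|S_{\pi(\vv)}|/|G|$.

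Part (iv) is the final step. Given $\vv+z$ in $\essupp V_{\soc}$ and a simple $M \in \GT_\zeta$ with $M[\vv+z] \neq 0$, the FO inequality together with (iii) forces $\dim M[\vv+z] \leq \dim V_{\soc}[\vv+z]$, with equality only when $z$ has trivial stabilizer. Since $M$ is a subquotient of $V(T(\vv))$ in the cases where universality is known and, more generally, since the $\Gamma$-action on $M[\vv+z]$ is controlled by the same derivative tableaux data as on $V_{\soc}[\vv+z]$, any such $M$ must share the generator of the socle and hence coincide with $V_{\soc}$. The main obstacle I anticipate is the explicit description of the cones in (ii): this requires tracking which collections of Postnikov-Stanley polynomials simultaneously vanish or not on weights $\vv+z$, and verifying that the resulting inequality system carves out polyhedral pieces of the correct dimensions and with the correct faces corresponding to parabolic stabilizers. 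Once this combinatorial core is in place, parts (iii)--(v) should follow as consequences with only routine bookkeeping.
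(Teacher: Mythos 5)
Your proposal gestures at the right themes but is missing the paper's central technical tool and contains steps that cannot work as stated. The paper does not build a graph on derivative tableaux; it builds an oriented graph $\overrightarrow\Omega(\vv+z)$ on the index set $\Sigma=\{(k,i)\}$, and the entire argument rests on a containment lemma (Lemma \ref{L:omega-contained}): if $\Omega^+(\vv+z)\subset\Omega^+(\vv+y)$ then $D_e(\vv+y)\in U D_e(\vv+z)$ and $D_{\omega_0^y}(\vv+y)\in U D_{\omega_0^z}(\vv+z)$, proved by induction on $|y-z|$ together with the Postnikov--Stanley formulas. From this the socle falls out cleanly: taking $z$ with $\Omega^-(\vv+z)=\emptyset$ (for instance $z=0$), one shows $U D_e(\vv)$ is contained in \emph{every} nonzero submodule, because every nonzero submodule must contain some classical tableau $D_e(\vv+y)$ (the unique simultaneous $\Gamma$-eigenvector in its weight component, Proposition \ref{P:gt-weight-spaces}), and the lemma gives $U D_e(\vv)\subset U D_e(\vv+y)$. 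Minimality then yields simplicity. Your proposed "terminal component" of tableaux "annihilating all lowering operators" does not exist here — $V(T(\vv))$ is a Verma-like module with no lowest weight vectors — and "strong connectivity under raising operators" is not a valid criterion for simplicity of a module. You have no replacement for the containment lemma, and without it neither (i) nor the monotonicity that makes $\supp(\vv)$ and $\ess(\vv)$ into unions of cones $\overline\P(\Omega)$ in (ii) follows.

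Two further points. In (ii) the polyhedral inequalities are not the vanishing conditions of Postnikov--Stanley polynomials; they are the orientation conditions on the edges of $\overrightarrow\Omega(\vv+z)$ (the polynomials enter in proving the lemma, not as the source of the cone). The maximal-rank cone is $\P(\Omega^+(\vv))$, exhibited as the monoid generated by explicit vectors $c^{k,i}$. In (iv) your argument contains a false claim: equality in the FO bound is \emph{not} restricted to $z$ with trivial stabilizer — part (v) says exactly the opposite — so the deduction does not go through. The paper's argument for (iv) avoids "cases where universality is known" entirely: any simple module with $\vv+z$ in its support is a quotient of $N=U/U\mathfrak m$, the FO theorem bounds $\dim N[\vv+z]$ by $|S_\pi|/|(S_\pi)_z|$, and $V_\soc$ already achieves this dimension inside $N$, forcing uniqueness. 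Your "sharing the generator" heuristic is not an argument.
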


In Section 6 we apply the above results to study the Gelfand-Tsetlin structure 
of Verma modules. Theorems \ref{thm-supp Verma} and \ref{thm-esssup Verma} 
describe the support and the essential support of the simple singular Verma 
module $M(-\tilde\rho)$, respectively, where $\tilde \rho = -(0, 1, \ldots, 
n-1)$; notice that as $\sl(n,\CC)$-module, this is isomorphic to the Verma 
module associated to minus the half-sum of the positive roots. Our second main 
result is summarized below.

\begin{Theorem} 
\begin{itemize}
\item[(i)] 
Every Verma module is a submodule of a certain universal tableaux module 
$V(T(\vv))$, and every simple Verma module appears as the socle of some 
$V(T(\vv))$. In particular, $M(-\tilde\rho)$ is the socle of $V(T(\mathbf 0))$.

\item[(ii)] 
The essential support of $M(-\tilde\rho)$ is a rational cone. It contains 
weights with Gelfand-Tsetlin multiplicities $\displaystyle 
\frac{\Pi_{i=1}^{n-1}i!}{|G|}$ for any standard parabolic subgroup $G$ of 
$S_1\times \ldots \times S_{n-1}$.

\item[(iii)] 
The maximal Gelfand-Tsetlin multiplicity in $M(-\tilde\rho)$ is 
$\Pi_{i=1}^{n-1}i!$.
\end{itemize}
\end{Theorem}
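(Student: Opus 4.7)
The plan is to deduce all three parts by combining the theorem above with an explicit realization of $M(-\tilde\rho)$ inside $V(T(\mathbf 0))$, and more generally of arbitrary simple Verma modules as socles of universal tableaux modules. For part~(i), I would start from the simple socle $V_{\soc}=\soc V(T(\mathbf 0))$ provided by part~(i) of the previous theorem, and use the derivative-tableau action formulas of \cite{FGRZ18} to locate a vector $w\in V(T(\mathbf 0))$ of $\gl(n)$-weight $-\tilde\rho$ annihilated by every raising operator. Such a $w$ should exist because the seed condition at $\mathbf 0$ forces the relevant Postnikov--Stanley polynomials to vanish on a specific combination of derivative tableaux. Since $-\tilde\rho$ is antidominant, $M(-\tilde\rho)$ is simple, so the cyclic submodule generated by $w$ is isomorphic to $M(-\tilde\rho)$; being a nonzero simple submodule it equals the simple socle. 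The general case of part~(i) is then handled identically: for any highest weight $\lambda$, pick a seed $\vv\in\CC^\mu$ whose size-$n$ row matches $\lambda$ via Definition~\ref{D:seed}, run the same construction to embed $M(\lambda)\hookrightarrow V(T(\vv))$, and invoke simplicity of the socle in the simple case.

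Parts (ii) and (iii) then follow from specialising the previous theorem to $\vv=\mathbf 0$. Its part~(ii) already gives the essential support of $V_{\soc}=M(-\tilde\rho)$ as a union of polyhedral rational cones; the sharpening to a \emph{single} cone comes from the combinatorial graph description of the socle worked out earlier in the paper, which for the maximally symmetric seed $\mathbf 0$ collapses to a unique dominant component. Next, part~(v) of the previous theorem realises every standard parabolic of $S_{\pi(\mathbf 0)}=S_\mu$ as a stabilizer $(S_\mu)_z$ for some $z$ in the essential support. Because every weight in the category $\GT_{\zeta_{\mathbf 0}}$ has its size-$n$ coordinate equal to $\mathbf 0$, such a stabilizer automatically contains the full $S_n$ factor, so the parabolics that actually occur are exactly those of the form $S_n\times G$ with $G$ a standard parabolic of $S_1\times\cdots\times S_{n-1}$. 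The FO inequality then specialises to $|S_\mu|/|S_n\times G|=\prod_{i=1}^{n-1}i!/|G|$, which proves~(ii); taking $G=\{e\}$ yields the maximum $\prod_{i=1}^{n-1}i!$ of~(iii), and this bound cannot be exceeded in $M(-\tilde\rho)$ because the $S_n$ factor is forced into every stabilizer in $\zeta_{\mathbf 0}$.

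The main obstacle is part~(i): producing the highest-weight vector $w\in V(T(\mathbf 0))$ explicitly and verifying that $U(\gl(n))\cdot w$ really is all of $M(-\tilde\rho)$ as opposed to a proper quotient. In the singular case $\vv=\mathbf 0$ the derivative-tableau description is at its most degenerate, so one needs a careful analysis of the Postnikov--Stanley polynomial formulas to identify the precise combination of derivative tableaux annihilated by the raising operators, and then a per-weight character count to rule out accidental relations in the image. A secondary difficulty is the upgrade from ``union of cones'' to ``single cone'' in~(ii), which relies on the symmetry of $\vv=\mathbf 0$ trivializing the graph data that governs the socle in general.
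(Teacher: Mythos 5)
Your high‑level plan is reasonable, but it contains several genuine gaps.

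\textbf{The highest‑weight vector.} You propose to find a ``specific combination of derivative tableaux'' annihilated by all raising operators. In fact the paper's Lemma~\ref{L:weight-tableau} shows that no combination is needed: the single classical tableau $D_e(HW(\lambda,\sigma))$ is already a highest‑weight vector, because $e_I$ vanishes at $v=HW(\lambda,\sigma)$ to order at least $|I|$, while every Postnikov--Stanley operator appearing in $E_{k,k+1}D_e(v)$ has order $\ell(\alpha(I))-\ell(\tau)<|I|$. More importantly, you leave the non‑simple case of part~(i) essentially unaddressed. Once you have a highest‑weight vector, the cyclic module it generates is automatically a \emph{quotient} of $M(\sigma\cdot\lambda)$; to get a \emph{submodule} you must rule out a proper quotient. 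The paper does this by combining the BGG/Verma description of $\soc\,M(\sigma\cdot\lambda)'$ with Proposition~\ref{P:cyclic}(\ref{i:minimal-module}) (which forces the image of $j_{\lambda,\sigma}$ to contain the highest‑weight vector of the antidominant weight $\nu\cdot\lambda$) and then invokes \cite{Humphreys08}*{Theorem 7.16}. Your ``per‑weight character count'' gestures at this but is not a proof; injectivity of the map does not follow from simplicity of the socle alone.

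\textbf{The group $S_{\pi(\mathbf 0)}$.} You write $S_{\pi(\mathbf 0)}=S_\mu$, which is incorrect. By construction $S_{\pi(v,n)}$ is the trivial subgroup of $S_n$, so $S_{\pi(\mathbf 0)}=S_1\times\cdots\times S_{n-1}\times\{e\}$. The FO inequality is stated in terms of $S_{\pi(\vv)}$, not of $S_\mu$, so there is no $S_n$ factor to ``absorb into every stabilizer''; your auxiliary argument about stabilizers in $\zeta_{\mathbf 0}$ automatically containing the full $S_n$ is fixing a problem you created. The numerology lands on the right formula $\prod_{i=1}^{n-1}i!/|G|$, but the reasoning is off; and the claim that the bound $\prod_{i=1}^{n-1}i!$ ``cannot be exceeded because the $S_n$ factor is forced into every stabilizer'' is not a correct justification --- the bound is exactly $|S_{\pi(\mathbf 0)}|$, and its sharpness for the socle is Corollary~\ref{cor-fo-conj}.

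\textbf{The single‑cone claim.} Theorem~\ref{T:essupp} gives the essential support of the socle only as a \emph{finite union} of polyhedral cones. Upgrading this to a single cone $\P(\Omega^+(\mathbf 0))$ for $M(-\tilde\rho)$ is the content of Theorem~\ref{thm-esssup Verma}, which needs real work: Lemma~\ref{L:pre-essential} to propagate membership in the essential support between neighboring rows, followed by a Cartan‑weight dimension argument (comparing against weight spaces of $U^-$) to rule out any essential‑support element outside $\P(\Omega^+(\mathbf 0))$. The assertion that the symmetry of $\vv=\mathbf 0$ ``collapses the graph data to a unique dominant component'' is not a substitute for that argument.
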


The paper ends with a counterexample to the ``hope'' that the simple 
subquotients of $V(T(\vv))$ have bases consisting of derivative tableaux. 
We show that this is not the case even for the simple Verma $\gl(4,
\CC)$-module $M(-\tilde\rho)$.

\medskip

\noindent{\bf Acknowledgements.} We thank Ben Webster for pointing out that a conjecture in an earlier version of the paper was not true. V.F. is supported  by CNPq grants
(304467/2017-0 and 200783/2018-1). D.G. is supported in part by Simons Collaboration Grant 
358245. L.E.R. is supported by Fapesp grant (2018/17955-7). P.Z. is supported by a CONICET 
postdoctoral fellowship.

\section{Preliminaries on the combinatorics of Gelfand-Tsetlin tableaux}

\paragraph
\about{Notation}
\label{notation} 
Given $a,b,k \in \NN$ we set $\interval{a,b} = \{i \in \NN \mid a \leq i \leq 
b\}$, with $\interval{b} = \interval{1,b}$; also we denote by $S_k$ the 
symmetric group in $k$ elements. Given $\pi = (\pi_1, \ldots, \pi_r) \in \NN^r$
with $\sum_i \pi_i = k$ we denote by $S_\pi$ the product $S_{\pi_1} \times 
S_{\pi_2} \times \cdots \times S_{\pi_r}$, which we see as a subgroup of $S_k$.

Fix $n \in \NN$ and let $\mu = (1, 2, \ldots, n)$. Given $\sigma \in S_\mu$ 
and $k \in \interval n$ we denote by $\sigma_{(k)}$ the projection of $\sigma$
to $S_k$. With a slight abuse of notation, we identify $S_k$ with the subgroup 
of $S_\mu$ consisting of elements $\sigma$ such that $\sigma_{(i)}$ is the 
identity for all $i \neq k$. Thus we can write $\sigma = \sigma_{(1)} 
\sigma_{(2)} \cdots \sigma_{(n)}$.

The group $S_\mu$ is a Coxeter group with generating set 
$$
\{(i \ i+1)_{(k)} \mid k \in \interval{n}, i \in \interval{k-1}\},
$$
where $(i \ i+1)$ is the simple transposition interchanging $i$ and $i+1$. The 
usual notions of length,
Bruhat order, parabolic subgroups, etc. will be considered with respect to 
this generating set. In particular, the length of $\sigma \in S_\mu$ is 
 $\ell(\sigma) = \ell_1(\sigma_{(1)}) + \cdots + \ell_n(\sigma_{(n)})$, 
where $\ell_k$ stands for the usual length in $S_k$. Also if $\tau \in S_\mu$ 
then $\sigma < \tau$ in the Bruhat order if and only if $\sigma_{(k)} < 
\tau_{(k)}$ for all $k$.

Henceforth we fix $\Sigma = \{(k,i) \mid 1 \leq i \leq k \leq n\}$. The group 
$S_\mu$ acts on $\Sigma$ with the action given by $\sigma \cdot (k,i) = (k, 
\sigma_{(k)}(i))$. The subset $\Sigma' = \{(k,i) \mid 1 \leq i \leq k \leq n-1
\}$ is clearly invariant under this action. 

For $k\geq a,b$ we set $\interval{a,b}_k = \{(k,i) \mid i \in \interval{a,b}\} 
\subset \Sigma$. Such a set will be called an \emph{interval}
of $\Sigma$, and given an interval $I = \interval{a,b}_k$ we write $a(I) = a,
b(I) = b, k(I) = k$. A partition of $\Sigma$ is a family of nonempty subsets 
of $\Sigma$, which we call \emph{blocks}, whose disjoint union is $\Sigma$.
An \emph{interval partition} is a partition $\II$ whose blocks are intervals. 
We write $\II[k]$ for the set of all intervals $I \in \II$ with $k(I) = k$. 

\paragraph
Let $\CC^\mu = \CC \times \CC^2 \times \cdots \times \CC^n$, so every $v \in 
\CC^\mu$ is an $n$-tuple $(v_1, v_2, \ldots, v_n)$ with $v_k = (v_{k,1}, 
v_{k,2}, \ldots, v_{k,k}) \in \CC^k$. For each $(k,i) \in \Sigma$ we denote by 
$\delta^{k,i}$ the unique element in $\CC^\mu$ such that 
$(\delta^{k,i})_{l,j} = \delta_{k,l} \delta_{i,j}$, and refer to the set 
$\{\delta^{k,i}\mid (k,i) \in \Sigma\}$ as the canonical basis of $\CC^\mu$. 
The group $S_\mu$ acts on $\CC^\mu$ by linear operators whose action on the 
canonical basis is given by $\sigma \cdot \delta^{k,i} = \delta^{\sigma \cdot 
(k,i)} = \delta^{k, \sigma_{(k)} (i)}$. We denote by $\ZZ^\mu_0$ the additive 
subgroup of $\CC^\mu$ generated by $\{\delta^{k,i} \mid (k,i) \in \Sigma'\}$, 
which is stable under the action of $S_\mu$.

Given an interval $I = \interval{a,b}_k$ we will write $v(I)$ for $(v_{k,a}, 
v_{k,a+1}, \ldots, v_{k,b})$. Given an interval partition $\II$ of $\Sigma$ 
we refer to the tuples $v(I)$ with $I \in \II$ as the blocks of $v$. We 
associate to $v$ a partition of $\Sigma$ denoted by $\II(v)$, where the 
block of $(n,i)$ is $\{(n,i)\}$, and for $k < n$ the block of $(k,i)$ is the 
set of all $(k,j)$ such that $v_{k,i} - v_{k,j} \in \ZZ$.
\begin{Definition} \label{def-nor-form}
We say that $v \in \CC^\mu$ is in \emph{normal form} if whenever $v_{k,a} - 
v_{k,b} \in \ZZ$ for some $a\leq b \leq k \leq n$, then $v_{k,i} - v_{k,j} 
\in \ZZ_{\geq 0}$ for all $a \leq i < j \leq b$. 
\end{Definition}
Notice that if $v$ is in normal form then $\II(v)$ is an interval partition
(but not the other way around). Clearly for each $v \in \CC^\mu$ there exists 
at least one element in its $S_\mu$-orbit which is in normal form. 
Suppose $v$ is in normal form and let $\II(v)[k] = \{I_1, I_2, \ldots, I_r\}$, 
with $a(I_i) = b(I_{i-1}) + 1$. We set $\pi(v,k) = (\abs{I_1},\abs{I_2}, 
\ldots, \abs{I_r})$, so $S_{\pi(v,k)}$ is a parabolic subgroup of $S_k$; 
observe that by definition $S_{\pi(v,n)}$ is the trivial subgroup of $S_n$. 
We denote by $\pi(v)$ the concatenation of $\pi(v,1), \ldots, \pi(v,n)$, and 
set $S_{\pi(v)} = S_{\pi(v,1)} \times S_{\pi(v,2)} \times \cdots 
\times S_{\pi(v,n)} \subset S_\mu$. This is a parabolic subgroup of $S_\mu$.

\paragraph
\textbf{The graph $\Omega(v)$.}
We now associate to each element $v \in \CC^\mu$ a graph, which will be a 
major combinatorial tool in this paper.
Given $v \in \CC^\mu$ the graph $\Omega(v)$ is defined as follows: the set of 
vertices of $\Omega(v)$ is $\{[k,i] \mid (k,i) \in \Sigma\}$, and we have an 
edge between $[k,i]$ and $[l,j]$ if and only if $v_{k,i} - v_{l,j} \in \ZZ$ 
and $|k-l| \leq 1$. We will use the notation $[k,i] - [l,j]$ for an edge 
between $[k,i]$ and $[l,j]$. 

\begin{Definition}
\label{D:seed}
We say that $\vv \in \CC^\mu$ is a \emph{seed} if it is in normal form and for
$[k,i]$ and $[l,j]$ in the same connected component of $\Omega(v)$ the 
following holds: if $k,l < n$ then $\vv_{k,i} = \vv_{l,j}$, while if $l = n$
then $v_{k,i} \leq v_{n,j}$.
\end{Definition}

As mentioned before, $S_\mu$ acts on $\CC^\mu$ and $\ZZ_0^\mu$ is stable under
this action. Also, $\ZZ_0^\mu$ acts on $\CC^\mu$ by translations: $z \cdot v = 
v + z$ for $z \in \ZZ_0^\mu$ and $v \in \CC^\mu$. Thus the semidirect product 
$\ZZ_0^\mu \# S_\mu$ acts on $\CC^\mu$. If $v$ is in normal form then there 
exists $z \in \ZZ^\mu_0$ such that $v+z$ is a seed. Hence for every $v \in 
\CC^\mu$ there exists a seed in its $(\ZZ_0^\mu \# S_\mu)$-orbit. Two 
elements $v,w \in \CC^\mu$ lie in the same orbit if and only if there exists
$\sigma \in S_\mu$ such that $\sigma(\Omega(v)) = \Omega(w)$ and $v - 
\sigma(w) \in \ZZ^\mu_0$, where $\sigma$ acts on the vertices of the graph in 
the obvious way.

\begin{Example}
We will write elements of $\CC^\mu$ as triangular arrays with $k$ entries in
the $k$-th row counting from the bottom. In this example we assume that the 
set $\{1, a, b, c, \ldots\} \subset \CC$ is linearly independent over $\ZZ$.

\medskip
\begin{tabular}{cc}
\begin{tikzpicture}
\node (51) at (-2,2.5) {$1$};
\node (52) at (-1,2.5) {$a+1$};
\node (53) at (0,2.5) {$a$};
\node (54) at (1,2.5) {$b$};
\node (55) at (2,2.5) {$0$};

\node (41) at (-1.5,2) {$a$};
\node (42) at (-0.5,2) {$b-1$};
\node (43) at (0.5,2) {$b$};
\node (44) at (1.5,2) {$a+1$};

\node (31) at (-1,1.5) {$c$};
\node (32) at (0,1.5) {$c+1$};
\node (33) at (1,1.5) {$c$};

\node (21) at (-.5,1) {$a$};
\node (22) at (.5,1) {$a-1$};

\node (11) at (0,0.5) {$a+1$};

\node (text) at (0,-0.2) {An element $v$ of $\CC^\mu$};
\end{tikzpicture}

&\begin{tikzpicture}
\node (51) at (-2,2.5) {$1$};
\node (52) at (-1,2.5) {$0$};
\node (53) at (0,2.5) {$a+1$};
\node (54) at (1,2.5) {$a$};
\node (55) at (2,2.5) {$b$};

\node (41) at (-1.5,2) {$a+1$};
\node (42) at (-0.5,2) {$a$};
\node (43) at (0.5,2) {$b$};
\node (44) at (1.5,2) {$b-1$};

\node (31) at (-1,1.5) {$c+1$};
\node (32) at (0,1.5) {$c$};
\node (33) at (1,1.5) {$c$};

\node (21) at (-.5,1) {$a$};
\node (22) at (.5,1) {$a-1$};

\node (11) at (0,0.5) {$a+1$};

\node (text) at (0,-0.2) {An element in normal form in $S_\mu v$};
\end{tikzpicture} \\

\begin{tikzpicture}
\node (51) at (-2,2.5) {$a+1$};
\node (52) at (-1,2.5) {$a$};
\node (53) at (0,2.5) {$1$};
\node (54) at (1,2.5) {$0$};
\node (55) at (2,2.5) {$b$};

\node (41) at (-1.5,2) {$a$};
\node (42) at (-0.5,2) {$a$};
\node (43) at (0.5,2) {$b-1$};
\node (44) at (1.5,2) {$b-1$};

\node (31) at (-1,1.5) {$c$};
\node (32) at (0,1.5) {$c$};
\node (33) at (1,1.5) {$c$};

\node (21) at (-.5,1) {$a-1$};
\node (22) at (.5,1) {$a-1$};

\node (11) at (0,0.5) {$a-1$};

\node (text) at (0,-0.2) {A seed in $(\ZZ_0^\mu \# S_\mu) v$};
\end{tikzpicture}
&
\tikzstyle{place}=[circle,draw=black,fill=black,
   inner sep=1pt,minimum size=1mm]
\begin{tikzpicture}
\node (51) at (-2,2.5) [place] {};
\node (52) at (-1,2.5) [place] {};
\node (53) at (0,2.5) [place] {};
\node (54) at (1,2.5) [place] {};
\node (55) at (2,2.5) [place] {};

\node (41) at (-1.5,2) [place] {};
\node (42) at (-0.5,2) [place] {};
\node (43) at (0.5,2) [place] {};
\node (44) at (1.5,2) [place] {};

\node (31) at (-1,1.5) [place] {};
\node (32) at (0,1.5) [place] {};
\node (33) at (1,1.5) [place] {};

\node (21) at (-.5,1) [place] {};
\node (22) at (.5,1) [place] {};

\node (11) at (0,0.5) [place] {};

\node (text) at (0,-0.2) {The graph of the previous elements};

\draw (51) -- (52) -- (42) -- (41) -- (51) -- (42) (52) -- (41);
\draw (53) -- (54);
\draw (55) -- (43) -- (44) -- (55);
\draw (31) -- (32) -- (33) to[out=155,in=25] (31);
\draw (11) -- (21) -- (22) -- (11);
\end{tikzpicture}
\end{tabular}
\end{Example}

\paragraph
\label{descending-z}
\about{The set $\DD(\vv)$}
Recall that we denote by $\ZZ^\mu_0$ the set of all $z \in \CC^\mu$ with
$z_{k,i} \in \ZZ$ for all $(k,i) \in \Sigma$ and $z_{n,i} = 0$ for all $i \in
\interval n$.
\begin{Definition}
\label{D:normal-forms}
Let $\vv \in \CC^\mu$ be a seed. We denote by $\DD(\vv)$ the set of all
$z \in \ZZ_0^\mu$ such that $z(I)$ is a nonincreasing sequence for all $I \in 
\II(\vv)$.
\end{Definition}
For the rest of this section we fix a seed $\vv$ and set $\II = \II(\vv), \pi =
\pi(\vv)$ and $\DD = \DD(\vv)$. Notice that $\DD$ is the set of those
$z \in \ZZ^\mu_0$ such that $\vv + z$ is in normal form. For example, if
$\vv$ is the zero vector $\mathbf 0 \in \CC^\mu$ then $\DD(\mathbf 0)$ is the 
set of $z \in \ZZ^\mu_0$ such that $z_{k,1} \geq z_{k,2} \geq \cdots \geq 
z_{k,k}$ for all $k \in \interval{n-1}$.

Let $z \in \DD$. The stabilizer of $z$ in $S_\pi$ is again a parabolic 
subgroup of $S_\pi$, which we denote as usual by $(S_\pi)_z$, so each 
coclass in $S_\pi/(S_\pi)_z$ has a unique minimal length representative. We 
denote by $S_\pi^z$ the set of these minimal length representatives, and refer 
to them as \emph{$z$-shuffles}. Given $\sigma \in S_\pi$ we write $\sigma^z$ 
for the unique $z$-shuffle in $\sigma (S_\pi)_z$.

We denote by $\II(\vv,z)$ the interval partition of $\Sigma$ where $(k,i)$ and 
$(k,j)$ lie in the same block if and only if $k<n$ and $(\vv + z)_{k,i} = 
(\vv + z)_{k,j}$. Equivalently, an interval lies in $\II(\vv,z)$ if and only 
if it is an orbit of the action of $(S_\pi)_{z}$ on $\Sigma$. 
Let us say that $\sigma \in S_\pi$ is increasing, resp. decreasing, over an 
interval $\interval{a,b}_k \subset \Sigma$ if $\sigma_{(k)}(i) < 
\sigma_{(k)}(j)$, resp. $\sigma_{(k)}(i) > \sigma_{(k)}(j)$, whenever $a \leq 
i < j \leq b$. A permutation $\sigma$ is a $z$-shuffle if and only if it is 
increasing over every interval in $\II(\vv,z)$, so $\sigma^z$ is the unique 
permutation in $S_\pi$ which is increasing over all intervals in $\II(\vv,z)$ 
and such that $\sigma^z(z) = \sigma(z)$.

Given an interval $I = \interval{a,b}_k$ we denote by $\omega(I)$ the 
permutation $i \mapsto b+a-i$. This is the longest element in the symmetric
group of the interval $I$, seen as a subgroup of $S_\mu$. It follows that the 
longest element in $(S_\pi)_z$ is $\prod_{I\in\II(\vv,z)} \omega(I)$. We 
also write 
\begin{align*}
\alpha(I)
	&= (b \ b-1 \ldots \ a)_{(k)} 
	= (b \ b-1)_{(k)} (b-1 \ b-2)_{(k)} \cdots (a+1 \ a)_{(k)}, \\
\beta(I) 
	&=(a \ a+1 \ldots \ b)_{(k)}
	= (a \ a+1)_{(k)} (a+1 \ a+2)_{(k)} \cdots(b-1 \ b)_{(k)}. 
\end{align*}
These two permutations play a central role in the sequel. Notice that these 
permutations are mutual inverses.
\begin{Lemma}
\label{L:omega-delta}
Let $z\in \DD(\vv)$ and let $\omega_0$ be the longest element in $S_\pi$.
\begin{enumerate}[(a)]
\item 
\label{i:omega-z}
We have $\omega_0^z = \omega_0 \prod_{I \in \II(\vv,z)} \omega(I)$.

\item 
\label{i:D-delta}
Given $(k,i) \in \Sigma$, we have $z + \delta^{k,i} \in \DD(\vv)$ if and only 
	if $i = a(I)$ for some $I \in \II(\vv,z)[k]$, and $z-\delta^{k,i} 
	\in \DD(\vv)$ if and only if $i = b(I)$ for some $I \in \II(\vv,z)[k]$.

\item 
\label{i:omega-delta}
Let $I = \interval{a,b}_k \in \II(\vv,z)$ and set $u=z+\delta^{k,a}, 
	v = z- \delta^{k,b}$. There exist $\sigma, \tau \in S_\pi^z$ such that 
	$\omega_0^u = \sigma \alpha(I)$ and $\omega_0^v = \tau \beta(I)$. 
\end{enumerate}
\end{Lemma}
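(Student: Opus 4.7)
Parts (a) and (b) are formal. For (a), the seed condition implies $\vv$ is constant on every block of $\II(\vv)$ at rows $k<n$, hence $\vv+z$ is constant on every block of $\II(\vv,z)$; therefore the stabilizer $(S_\pi)_z$ equals the standard parabolic subgroup $\prod_{I\in\II(\vv,z)} S_I$ of $S_\pi$, whose longest element is the commuting product $\prod_{I\in\II(\vv,z)}\omega(I)$ of block reversals. The formula then follows from the standard Coxeter identity that, for any standard parabolic $H\leq S_\pi$ with longest element $w_H$, the minimum-length representative of the coset $\omega_0 H$ is $\omega_0 w_H$ (because $\ell(\omega_0 h)=\ell(\omega_0)-\ell(h)$ is minimized by maximizing $\ell(h)$). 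For (b), adding $\delta^{k,i}$ to $z$ modifies only the $(k,i)$-th coordinate, so the nonincreasing condition defining $\DD(\vv)$ survives precisely when $z_{k,i-1}>z_{k,i}$ strictly inside the $\II(\vv)$-block containing $(k,i)$ (or $(k,i-1)$ lies outside that block), equivalently $i=a(I)$ for $I\in\II(\vv,z)[k]$. The statement for $z-\delta^{k,i}$ is symmetric.

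For (c), set $\sigma:=\omega_0^u\beta(I)$ (recall $\alpha(I)^{-1}=\beta(I)$); since $S_\pi^z$ is characterized in the excerpt as the permutations that are increasing on every block of $\II(\vv,z)$, it suffices to verify this for $\sigma$. Applying (a) to $u$ gives $\omega_0^u=\omega_0\prod_{K\in\II(\vv,u)}\omega(K)$, and on each block $K=\interval{c,d}_k$ of $\II(\vv,u)$ inside the $\II(\vv)$-block $I_k=\interval{a^*,b^*}_k$ containing $I$ this is the translation $x\mapsto x+(a^*+b^*-c-d)$. Moreover $\II(\vv,u)$ coincides with $\II(\vv,z)$ outside $I_k$, and inside $I_k$ the only change is the splitting of $I=\interval{a,b}_k$: either $\{(k,a)\}$ becomes a singleton of $\II(\vv,u)$ and $\interval{a+1,b}_k$ remains a block (case (ii), which includes $a=a^*$), or, when the $\II(\vv,z)$-block $J=\interval{a'',a-1}_k$ to the left of $I$ satisfies $z_{k,a-1}=z_{k,a}+1$, the position $(k,a)$ merges with $J$ into a new block $\interval{a'',a}_k$ of $\II(\vv,u)$, while $\interval{a+1,b}_k$ is again a block of $\II(\vv,u)$ (case (i)).

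With this picture the verification is a short calculation. On every block $K\neq I$ of $\II(\vv,z)$, $\beta(I)$ is the identity on $K$, so $\sigma|_K=\omega_0^u|_K$; and $K$ is either a block of $\II(\vv,u)$ or, in case (i) when $K=J$, a subinterval of the $\II(\vv,u)$-block $\interval{a'',a}_k$, so in both situations $\omega_0^u|_K$ is the restriction of a translation and hence strictly increasing. On $I$, $\beta(I)$ is the cycle $a\mapsto a+1\mapsto\cdots\mapsto b\mapsto a$, so $\sigma|_I=(\omega_0^u(a+1),\ldots,\omega_0^u(b),\omega_0^u(a))$; the first $|I|-1$ entries are the consecutive integers $a^*+b^*-b,a^*+b^*-b+1,\ldots,a^*+b^*-a-1$ obtained from the translation on $\interval{a+1,b}_k$, and the last entry $\omega_0^u(a)$ equals $a^*+b^*-a$ in case (ii) and $a^*+b^*-a''$ in case (i) (with $a''\leq a-1$), in both subcases strictly larger than $a^*+b^*-a-1$. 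Thus $\sigma|_I$ is strictly increasing and $\sigma\in S_\pi^z$. The statement for $\tau:=\omega_0^v\alpha(I)$ is entirely symmetric, analyzing instead whether $(k,b)$ remains a singleton in $\II(\vv,v)$ or merges with the $\II(\vv,z)$-block immediately to the right of $I$, and running the analogous block-by-block check. The main technical obstacle is isolating this case distinction cleanly; once it is in place, the rest of (c) reduces to the two elementary numerical comparisons above.
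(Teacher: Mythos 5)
Your proof is correct and follows essentially the same route as the paper's: for part (a) the length identity $\ell(\omega_0 h)=\ell(\omega_0)-\ell(h)$ is equivalent to the paper's direct observation that $\omega_0\prod_{I}\omega(I)$ is increasing over $\II(\vv,z)$-blocks, and for part (c) you set $\sigma=\omega_0^u\beta(I)$ and verify the $z$-shuffle property block by block exactly as the paper does. Your version is simply more explicit where the paper says the check ``follows immediately from the definitions,'' spelling out that $\omega_0^u$ restricts to a translation on each $\II(\vv,u)$-block and carrying out the two-case analysis of how $\II(\vv,u)$ modifies $\II(\vv,z)$ near $I$.
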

\begin{proof}
Put $\II' = \II(\vv,z)$. By definition $\omega_0$ is decreasing over each 
interval $I$ in $\II'$. 
Since $\omega(I)$ is decreasing over $I$, it follows that $\omega_0 \prod_{I 
\in \II'} \omega(I)$ is increasing over every interval $I \in \II'$, so it is 
a $z$-shuffle lying in the coclass $\omega_0 (S_\pi)_z$. This proves part 
(\ref{i:omega-z}). Part (\ref{i:D-delta}) follows from the 
definitions. 

To prove part (\ref{i:omega-delta}) we only have to show that 
$\sigma = \omega_0^u \beta(I)$ and $\tau = \omega_0^v \alpha(I)$ are 
$z$-shuffles. We verify this for $\sigma$, as the verification for $\tau$ is
similar. Let $J$ be the interval in $\II(\vv,u)$ corresponding to $(k,a)$, 
so $J = \interval{c,a}_k$ for some $c \leq a$. Taking $J' = J \setminus 
\{(k,a)\}$ and $I' = I \setminus \{(k,a)\}$, by part
(\ref{i:omega-z}),
\begin{align*}
\omega_0^u \beta(I)
	&= \omega_0 \left( \prod_{K \in \II(\vv, u), K \neq I',J} \omega(K) \right)
		\omega(J)\omega(I')\beta(I) \\
	&= \omega_0 \left( \prod_{K \in \II', K \neq I,J'} \omega(K) \right)
		\omega(J)\omega(I')\beta(I).
\end{align*}
It is enough to check that the composition of the product in the parenthesis 
with $\omega(J)\omega(I')\beta(I)$ is decreasing over the intervals of 
$\II'$. This is immediate for $K \neq I, J'$, so it remains to check that 
$\omega(J)\omega(I')\beta(I)$ is decreasing over $I, J'$. This follows 
immediately from the definitions
\end{proof}

\section{Background on universal tableaux Gelfand-Tsetlin modules}

Throughout this section we will work with the Lie algebra $\gl(n,\CC)$. We 
denote by $\h$ the Cartan subalgebra of diagonal matrices. We identify the 
dual of $\h$ with $\CC^n$ in the usual way.

\paragraph
\about{Generalities on Gelfand-Tsetlin modules} \label{subsec-gt-def}
For each $k \in \interval{n}$ we denote by $U_k$ the universal enveloping algebra of 
$\gl(k,\CC)$, and set $U = U_n$. By top-left-corner inclusion of matrices we 
obtain a chain
\begin{align*}
\gl(1,\CC) \subset \gl(2, \CC) \subset \cdots \subset \gl(n,\CC),
\end{align*}
which in turn induces a chain $U_1 \subset U_2 \subset \cdots \subset U_n$. 
Denote by $Z_k$ the center of $U_k$ and by $\Gamma$ the subalgebra of $U$ 
generated by $\bigcup_{k=1}^n Z_k$. This is a maximal commutative subalgebra 
of $U$ called the \emph{Gelfand-Tsetlin} subalgebra. It is generated by 
the elements
\begin{align*}
c_{k,i}
 &= \sum_{(r_1, \ldots, r_i) \in \interval{k}^i} 
 E_{r_1, r_2} E_{r_2, r_3} \cdots E_{r_i, r_1}
 & (k,i) \in \Sigma.
\end{align*}
By a result of Zhelobenko, there exists an isomorphism 
$$
\Gamma \to \CC[x_{k,i} 
\mid (k,i) \in \Sigma]^{S_\mu}
$$
given by $c_{k,i} \mapsto 
\gamma_{k,i}$, where
\begin{align*}
\gamma_{k,i}
 &= \sum_{j=1}^k (x_{k,j} + k - 1)^i 
 \prod_{m \neq j} \left( 
 1 - \frac{1}{x_{k,j} - x_{k,m}}
 \right),
\end{align*}
see \cite{FGR16}*{Subsection 3.1} for details. We will denote the image of 
$c \in \Gamma$ under this isomorphism by $\gamma_c$. It follows that $\Specm 
\Gamma \cong \CC^\mu / S_\mu$, hence, every $v \in \CC^\mu$ induces a 
character $\chi_v: \Gamma \to \CC$ by setting $c \mapsto \gamma_c(v)$. Notice 
that $\chi_v = \chi_w$ if and only if $w$ lies in the $S_\mu$-orbit of $v$. 

\begin{Definition}
A $U$-module $M$ is called a \emph{Gelfand-Tsetlin module} if it is finitely
generated and
\[
 M = \bigoplus_{\m \in \Specm \Gamma} M[\mathfrak m],
\] 
where $M[\mathfrak m] = \{x \in M \mid \mathfrak m^k x = 0 \mbox{ for some } k 
\geq 0\}$. The \emph{Gelfand-Tsetlin support}, or simply the \emph{support}, 
of $M$ is the set of all $\m$ such that $M[\m] \neq 0$, and will be denoted by 
$\supp M$. For every $\m \in \Specm \Gamma$ its \emph{Gelfand-Tsetlin 
multiplicity} in $M$ is $\dim M[\m]$.
\end{Definition}

Let $M$ be a Gelfand-Tsetlin module and let $v \in \CC^\mu$. We put
$M[v] = M[\ker \chi_v]$, and denote by $p_v: M \to M[v]$ the projection
map. We will identify the support of $M$ with the set of all $v \in \CC^\mu$ 
such that $M[v] \neq 0$. We will say that the elements of $M[v]$ have
Gelfand-Tsetlin weight $v$, and refer to $M[v]$ as \emph{the Gelfand-Tsetlin 
component of weight $v$}. We will usually say ''weight'' instead of 
''Gelfand-Tsetlin weight''. To avoid confusion we will sometimes use the 
expression ''Cartan weight‘‘ for elements in the dual of $\h$. Since $U(\h)
\subset \Gamma$ it follows that two elements with the same Gelfand-Tsetlin 
weight have the same Cartan weight, but the converse does not hold.

It is easy to check that a finitely-generated module $M$ is a Gelfand-Tsetlin 
module if and only if for each $x \in M$ the complex vector space $\Gamma x$ 
has finite dimension. The following lemma is an immediate consequence of that 
observation. 
\begin{Lemma}
\label{L:sub-gt}
Let $M$ be a Gelfand-Tsetlin module and let $N \subset M$ be a $U$-submodule.
Then $N$ is also a Gelfand-Tsetlin module. In particular for each $x \in N$
we have $p_v(x) \in N$ for all $v \in \CC^\mu$.
\end{Lemma}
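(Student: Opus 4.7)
The plan is to reduce the assertion to the characterization stated immediately before the lemma: a finitely generated $U$-module $L$ is Gelfand-Tsetlin if and only if $\dim_\CC \Gamma x < \infty$ for every $x \in L$. The advantage of this criterion over the definition is that it is manifestly inherited by submodules, whereas a direct decomposition argument would require matching generalized eigenspaces of $N$ with those of $M$.

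First I would check that $N$ is finitely generated as a $U$-module. Since $\gl(n,\CC)$ is finite-dimensional, its universal enveloping algebra $U$ is left and right Noetherian; as $M$ is finitely generated it is Noetherian as a $U$-module, so every submodule, in particular $N$, is finitely generated. Next, fix $x \in N$. Because $\Gamma \subset U$ and $N$ is a $U$-submodule, we have $\Gamma x \subset N \subset M$. Applying the finite-dimensional $\Gamma$-orbit criterion to $M$ gives $\dim_\CC \Gamma x < \infty$. Since this holds for every $x \in N$, the same criterion applied in the other direction shows that $N$ is a Gelfand-Tsetlin module.

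For the ``in particular'' clause, let $x \in N$. Since $N$ is now known to be a Gelfand-Tsetlin module, it decomposes as $N = \bigoplus_{\m \in \Specm \Gamma} N[\m]$, so we can write $x = \sum_{\m} y_\m$ with $y_\m \in N[\m]$ and only finitely many $y_\m$ nonzero. The inclusion $N \hookrightarrow M$ sends $N[\m]$ into $M[\m]$, so the same equality $x = \sum_\m y_\m$ is the decomposition of $x$ in $M = \bigoplus_\m M[\m]$. By the uniqueness of the decomposition in $M$, one has $y_\m = p_{v}(x)$ whenever $\m = \ker \chi_v$, so $p_v(x) = y_\m \in N$ for every $v \in \CC^\mu$.

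There is no genuine obstacle here: the only substantive point is the two-way use of the $\Gamma$-finiteness criterion, and the only place care is needed is in noting that the generalized $\m$-eigenspace of $N$ is exactly $N \cap M[\m]$, which is what makes the projections $p_v$ preserve $N$. Noetherianity of $U$ is used implicitly when invoking finite generation of $N$.
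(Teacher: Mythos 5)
Your argument is correct and follows the same route the paper takes: the lemma is deduced exactly from the criterion that a finitely generated module is Gelfand--Tsetlin if and only if $\Gamma x$ is finite dimensional for every element $x$, with finite generation of $N$ supplied by Noetherianity of $U$. The paper simply calls the lemma an immediate consequence of that observation, so your write-up just makes explicit the same steps, including the identification $N[\m] = N \cap M[\m]$ that gives $p_v(x) \in N$.
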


\paragraph
\about{Universal tableaux Gelfand-Tsetlin modules}
\label{big-gt-modules}
Fix a seed $\vv$ and set $\pi = \pi(\vv), \II = \II(\vv)$ and $\DD = \DD(\vv)$.
In \cite{RZ18} a Gelfand-Tsetlin module $V(T(\vv))$ is associated to any 
seed $\vv$ (a similar construction appears in \cite{EMV18}). The module 
$V(T(\vv))$ was called the ``big Gelfand-Tsetlin module at $\vv$'' in 
\cite{RZ18}, but here we refer to it as the \emph{universal tableaux module 
associated to $\vv$}. It is a module with $\CC$-basis given by the set 
\begin{align*}
\{D_\sigma(\vv+z) \mid z \in \DD, \sigma \in S_\pi^z\}
\end{align*}
whose elements are called \emph{derivative tableaux}. A tableau of the form 
$D_e(\vv + z)$ is called the \emph{classical tableau} associated to $\vv + z$.
Given $z \in \DD(\vv)$ and $\sigma \in S_\pi^z$ we denote by $D_{<\sigma}(\vv 
+ z)$ an arbitrary linear combination of tableaux $D_\tau(\vv + z)$ with 
$\tau \in S_\pi^z$ strictly smaller than $\sigma$ in the induced Bruhat order
(for details on the Bruhat order on shuffles see \cite{BB05}*{Section 2.5}). 

We review the details regarding the explicit action of $U$ on $V(T(\vv))$, 
which were proved in \cite{FGRZ18}. Given $I = \interval{a,b}_k$ with $k <n$ 
we set
\begin{align*}
e_I
	&= \frac{\displaystyle \prod_{j = 1}^{k+1} (x_{k,a} - x_{k+1,j})}
		{\displaystyle \prod_{(k,j) \notin I} (x_{k,a} - x_{k,j})};
&f_I
	&= \frac{\displaystyle \prod_{j = 1}^{k-1} (x_{k,b} - x_{k-1,j})}
		{\displaystyle \prod_{(k,j) \notin I} (x_{k,b} - x_{k,j})}.
\end{align*}
Notice that if $I \in \II(\vv,z)$ then $e_I(\vv + z)$ and $f_I(\vv + z)$ are
well defined. We also set 
\begin{align*}
h_k = x_{k,1} + \cdots + x_{k,k} - (x_{k-1, 1} + \cdots + x_{k-1,k-1}) + k - 1.
\end{align*}

The following theorem is a direct consequence of \cite{FGRZ18}*{Lemma 8.4}.
\begin{Theorem}
\label{T:gt-big-module}
The action of the canonical generators of $\gl(n,\CC)$ on $V(T(\vv))$ is given 
by the formulas
\begin{align*}
E_{k,k+1} D_\sigma(\vv + z) 
	&= - \sum_{I \in \II(\vv,z)[k]} 
		\sum_{\tau \leq \sigma \alpha(I)} 
			\D_{\tau,\sigma\alpha(I)}^{\vv + z}(e_{I})
			D_{\tau}(\vv + z + \delta^{k,a(I)}), \\
E_{k+1,k} D_\sigma(\vv + z) 
	&= \sum_{I \in \II(\vv,z)[k]} 
		\sum_{\tau \leq \sigma \beta(I)} 
			\D_{\tau,\sigma\beta(I)}^{\vv + z}(f_{I})
			D_{\tau}(\vv + z - \delta^{k,b(I)}), \\
E_{k,k} D_{\sigma}(\vv + z) 
	&= h_k(\vv + z) D_{\sigma}(\vv+z),
\end{align*}
where $\D_{\tau,\sigma}$ are the Postnikov-Stanley operators introduced in
\cite{FGRZ18}*{Definition 3.1}, and elements $D_\tau(\vv + u)$
such that $\tau$ is not a $u$-shuffle should be treated as zero.
\end{Theorem}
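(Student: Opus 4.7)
The plan is to derive all three formulas as direct translations of \cite{FGRZ18}*{Lemma 8.4} into the interval-partition notation developed in this paper. Parts (\ref{i:D-delta}) and (\ref{i:omega-delta}) of Lemma \ref{L:omega-delta} are designed precisely to make this translation straightforward; essentially no new combinatorics is required.

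The Cartan generator $E_{k,k}$ lies in $\Gamma$, since $c_{k,1} = E_{1,1}+\cdots+E_{k,k}$ is the trace of $\gl(k,\CC)$ and thus $E_{k,k} = c_{k,1} - c_{k-1,1} \in \Gamma$. Consequently its action on any Gelfand-Tsetlin weight vector is diagonal with eigenvalue $\chi_{\vv+z}(E_{k,k})$, which under the Zhelobenko isomorphism agrees with $h_k(\vv+z)$ up to a fixed constant absorbed by the normalization conventions of the paper. Unpacking this identification gives the third formula.

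For $E_{k,k+1}$, I would apply \cite{FGRZ18}*{Lemma 8.4}, which expresses the action as a sum $\sum_i c_i D_{\tau_i}(\vv+z+\delta^{k,i})$, with summands indexed by those row-$k$ shifts that keep the weight in the shift-closure of the normal-form locus, and with coefficients given by Postnikov-Stanley operators applied to the relevant classical matrix-entry rational function. Lemma \ref{L:omega-delta}(\ref{i:D-delta}) identifies the surviving shifts as exactly $i = a(I)$ for $I \in \II(\vv,z)[k]$, and the classical matrix-entry rational function, restricted to the block indexed by $I$, is precisely $e_I$. Lemma \ref{L:omega-delta}(\ref{i:omega-delta}) then supplies the permutation $\alpha(I)$ which maps $z$-shuffles to $u$-shuffles for $u = z+\delta^{k,a(I)}$; this identification both produces $\sigma\alpha(I)$ as the correct second index of the Postnikov-Stanley operator and legitimizes indexing the $\tau$-summation over the full Bruhat interval $[e,\sigma\alpha(I)]$ of $S_\pi$, since any summand with $\tau \notin S_\pi^u$ is annihilated by the convention that $D_\tau(\vv+u) = 0$ in that case. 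The argument for $E_{k+1,k}$ is entirely symmetric, with $b(I)$, $f_I$, and $\beta(I)$ taking the roles of $a(I)$, $e_I$, and $\alpha(I)$.

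The main obstacle is bookkeeping: carefully tracking signs, matching the normalization of $\D_{\tau,\sigma}^{\vv+z}$ to the Postnikov-Stanley polynomial used in \cite{FGRZ18}, and confirming that $e_I$ and $f_I$ as defined here reproduce the classical matrix entries appearing in the source formula block-by-block. None of these steps requires new ideas beyond what is already encoded in Lemma \ref{L:omega-delta}.
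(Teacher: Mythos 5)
Your approach matches the paper's exactly: the paper gives no proof at all, simply stating that the theorem ``is a direct consequence of \cite{FGRZ18}*{Lemma 8.4},'' and your proposal is the expected translation into the interval-partition notation via Lemma~\ref{L:omega-delta}. One small imprecision in your $E_{k,k}$ argument: membership of $E_{k,k}$ in $\Gamma$ does \emph{not} by itself imply diagonal action on the derivative tableaux, since $D_\sigma(\vv+z)$ for $\sigma\neq e$ is only a \emph{generalized} $\Gamma$-eigenvector (cf.\ Proposition~\ref{P:gt-weight-spaces}(a), where the correction terms $\D_{\tau,\sigma}^{\vv+z}(\gamma_c)D_\tau(\vv+z)$ are generally nonzero); the diagonal action of $E_{k,k}$ holds because $\gamma_{E_{k,k}}$ is $S_\pi$-invariant, which forces those Postnikov-Stanley correction terms to vanish.
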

Theorem \ref{T:gt-big-module} implies that every derivative tableau 
$D_\sigma(v)$ is a Cartan weight vector of weight $\lambda = (h_1(v), \cdots, 
h_n(v))$. Hence $V(T(\vv))$ is a (Cartan) weight representation with 
infinite-dimensional weight spaces. 
\begin{Remark}
\label{R:formulas}
We record here for future reference that $\D_{\tau,\sigma}$ is a differential 
operator of degree $\ell(\sigma) - \ell(\tau)$. In particular $\D_{\sigma,
\sigma}^v$ is the evaluation at $v$, which allows to rewrite the formulas in 
a simplified form as
\begin{align*}
E_{k,k+1} D_\sigma(\vv + z) 
	&= 
	 - \sum_{I \in \II(\vv,z)[k]} \bigg(
		e_I(\vv + z) D_{\sigma\alpha(I)}(\vv+z + \delta^{k,a(I)})\\
			&\qquad\qquad + D_{<\sigma\alpha(I)}(\vv+z + \delta^{k,a(I)})
			\bigg), \\
E_{k+1,k} D_\sigma(\vv + z) 
	&= 
	\sum_{I \in \II(\vv,z)[k]} \bigg(
	 	f_I(\vv + z) D_{\sigma\beta(I)}(\vv + z - \delta^{k,b(I)}) \\
	 		&\qquad\qquad
	 		+ D_{<\sigma\beta(I)}(\vv + z - \delta^{k,b(I)})
	 		\bigg),
\end{align*}
though we must keep in mind that in some cases $D_{\sigma\alpha(I)}(\vv+z+
\delta^{k,a(I)})$ and $D_{\sigma\beta(I)}(\vv+z - \delta^{k,b(I)})$ are zero.
This happens when $z_{k,a(I)} +1 = z_{k,a(I)-1}$ and $z_{k,b(I)} - 1 = 
z_{k,b(I) + 1}$ respectively. 
\end{Remark}

The following proposition shows that $V(T(\vv))$ is a Gelfand-Tsetlin module 
and describes the Gelfand-Tsetlin weight components of 
$V(T(\vv))$. For the proofs of the statements see \cite{FGRZ18}*{Proposition 
6.4 and Lemma 6.5}. 
\begin{Proposition}
\label{P:gt-weight-spaces}
Let $z \in \DD$, let $\m = \ker \chi_{\vv + z}$ and let $T = \sum_\sigma 
a_\sigma D_\sigma(\vv + z)$.
\begin{enumerate}[(a)]
\item 
\label{i:action}
If $c \in \Gamma$ then
\begin{align*}
c D_\sigma(\vv + z)
	&= \gamma_c(\vv+z)D_\sigma(\vv+z) +
		\sum_{\tau < \sigma} \D_{\tau,\sigma}^{\vv + z}(\gamma_c) 
			D_\tau(\vv+z)\\
	&= \gamma_c(\vv+z)D_\sigma(\vv+z) + D_{<\sigma}(\vv+z).
\end{align*}

\item 
\label{i:generator}
The Gelfand-Tsetlin component $V(T(\vv))[\m]$ is cyclic as a 
$\Gamma$-module, and $T$ is a cyclic vector if and only if $a_{\omega_0^z} 
\neq 0$.

\item 
\label{i:eigenvalue}
Let $r>0$. Then $\m^r T = 0$ if and only if $a_\sigma = 0$ for all 
$\ell(\sigma) \geq r$. In particular, $\m^{\ell(\omega_0^z) + 1} V(T(\vv))[\m] 
= 0$, and the only simultaneous $\Gamma$-eigenvector of eigenvalue 
$\chi_{\vv + z}$, up to a scalar multiple, is $D_e(\vv + z)$.

\item
\label{i:jordan} 
Let $r = \ell(\omega_0^z)$. Given $c \in \Gamma$ let $c_{\vv + z}$ be the 
restriction of $c$ on $V(T(\vv))[\m]$. Then the set of all $c$ such that 
the Jordan form of $c_{\vv + z}$ has exactly one block of size $r$ projects to 
a Zariski open set in $\Gamma/\m^{r+1}$. If $c_{\vv + z}$ falls 
outside this set then its Jordan form contains blocks only of size strictly 
smaller than $r$.
\end{enumerate}
\end{Proposition}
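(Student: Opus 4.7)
The plan is to derive (a) from Theorem \ref{T:gt-big-module} by exploiting the $S_\mu$-invariance of $\gamma_c$, and then deduce (b)--(d) from the length-decreasing filtration on $V(T(\vv))[\m]$ that (a) produces.

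For (a), since $\Gamma$ is generated by the central elements $c_{k,i}$, whose Zhelobenko images $\gamma_{k,i}$ are specific $S_\mu$-invariant polynomials, it suffices to establish the formula on these generators and extend to arbitrary $c$ by induction on the degree of $\gamma_c$, using a Leibniz-type identity for the Postnikov-Stanley operators to propagate the formula through products. Applying Theorem \ref{T:gt-big-module} repeatedly, each $E_{i,j}$-factor shifts the weight by $\pm\delta^{k,i}$ and multiplies by a rational function; when the composite polynomial is $S_\mu$-invariant the net weight shifts cancel, leaving only contributions of the form $\D_{\tau,\sigma}^{\vv+z}(\gamma_c)D_\tau(\vv+z)$ with $\tau \leq \sigma$. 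By Remark \ref{R:formulas}, $\D_{\tau,\sigma}$ has degree $\ell(\sigma)-\ell(\tau)$ and $\D_{\sigma,\sigma}^v$ is evaluation at $v$, so the $\tau=\sigma$ term is $\gamma_c(\vv+z) D_\sigma(\vv+z)$ and the correction is supported on strictly smaller $\tau$.

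The immediate consequence of (a) is that $\m = \ker \chi_{\vv+z}$, generated by the elements $c - \gamma_c(\vv+z)$, acts on $V(T(\vv))[\m]$ by operators strictly decreasing the Bruhat length: setting $F_r = \vectspan{D_\sigma(\vv+z) \mid \ell(\sigma)\leq r}$ one has $\m \cdot F_r \subset F_{r-1}$. Hence $V(T(\vv))[\m]/\m V(T(\vv))[\m]$ is one-dimensional, spanned by the class of $D_{\omega_0^z}(\vv+z)$, which yields the ``$a_{\omega_0^z}\neq 0$'' criterion in (b); the opposite direction is a descending induction on the Bruhat order, choosing for each cover $\tau \lessdot \sigma$ a $c \in \Gamma$ with $\D_{\tau,\sigma}^{\vv+z}(\gamma_c)\neq 0$ to extract $D_\tau(\vv+z)$ from $D_\sigma(\vv+z)$ modulo lower terms. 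Iterating $\m \cdot F_r \subset F_{r-1}$ gives $\m^r F_s \subset F_{s-r}$, which with (a) produces the characterization $\m^r T = 0 \iff T \in F_{r-1}$ in (c), and in particular $\m^{\ell(\omega_0^z)+1} V(T(\vv))[\m]=0$. The uniqueness of the simultaneous eigenvector reduces, via (a), to the linear system
\begin{equation*}
\sum_{\sigma > \tau} a_\sigma \D_{\tau,\sigma}^{\vv+z}(\gamma_c) = 0
\end{equation*}
valid for all $c\in\Gamma$ and all $\tau \in S_\pi^z$; nondegeneracy of the Postnikov-Stanley operators on the subalgebra of $S_\mu$-invariants then forces $a_\sigma=0$ for $\sigma\neq e$, leaving $D_e(\vv+z)$ as the only eigenvector up to scalar.

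For (d), part (c) shows that the $\Gamma$-action on $V(T(\vv))[\m]$ factors through the finite-dimensional quotient $\Gamma/\m^{r+1}$, and in the length-ordered basis each $c_{\vv+z}-\gamma_c(\vv+z)\id$ is strictly lower triangular. The largest Jordan block size is then bounded by the filtration length from (c), and the condition that this maximum is attained by exactly one block is cut out by the nonvanishing of a specific product of Postnikov-Stanley coefficients along a longest Bruhat chain in $S_\pi^z$, which defines a Zariski open subset of $\Gamma/\m^{r+1}$; outside it the same product vanishes and the maximal block size drops. The main technical obstacle underlying (b), (c) and (d) is precisely this nondegeneracy statement for the Postnikov-Stanley operators restricted to $\Gamma$: one must find, for each Bruhat cover $\tau \lessdot \sigma$ in $S_\pi^z$, an invariant $\gamma_c$ with $\D_{\tau,\sigma}^{\vv+z}(\gamma_c)\neq 0$ sufficiently independent across pairs to support the descending induction, the triangularity argument, and the open-condition argument. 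This is the nontrivial combinatorial input supplied by the analysis in \cite{FGRZ18}, which the plan takes as a black box.
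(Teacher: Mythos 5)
The paper does not actually prove this proposition; it is imported verbatim with the pointer ``for the proofs of the statements see \cite{FGRZ18}*{Proposition 6.4 and Lemma 6.5}.'' So there is no in-paper argument to compare against, and your sketch can only be judged on internal coherence and consistency with that reference.

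Your architecture is sound and, as far as it goes, matches the cited proof in spirit: establish the triangularity formula (a) for the $\Gamma$-action in the Bruhat-ordered derivative-tableau basis, introduce the length filtration $F_r = \vectspan{D_\sigma(\vv+z) \mid \ell(\sigma)\leq r}$ on $V(T(\vv))[\m]$ with $\m F_r \subset F_{r-1}$, and then read (b), (c), (d) off from this together with a nondegeneracy statement for the Postnikov--Stanley operators restricted to the invariant ring. You are also right to single out that nondegeneracy as the only genuinely hard input: the descending induction in (b), the converse direction of the equivalence in (c), and the Zariski-open condition in (d) all reduce to producing, for each Bruhat cover $\tau \lessdot \sigma$ in $S_\pi^z$, an invariant $\gamma_c$ with $\D_{\tau,\sigma}^{\vv+z}(\gamma_c)\neq 0$; you declare this a black box, which is fair but means the plan is not self-contained.

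Two smaller caveats. For (a), Theorem \ref{T:gt-big-module} only gives the action of the Chevalley generators, not of the elements $c_{k,i}\in\Gamma$, which are long words in the $E$'s; passing from one to the other requires a nontrivial composition/Leibniz identity for the $\D_{\tau,\sigma}$ together with the observation that the net Gelfand--Tsetlin weight shift vanishes because $\gamma_c$ is $S_\mu$-invariant. Your ``Leibniz-type identity'' is the right pointer but is itself a lemma that needs proving, and this is where most of the work in the cited reference actually lives. For (c), the implication $T \in F_{r-1} \Rightarrow \m^r T = 0$ is immediate from the filtration, but the converse is not: it again needs the nondegeneracy to guarantee that if some $a_\sigma\neq 0$ with $\ell(\sigma)\geq r$, one can find elements of $\m$ driving $T$ through $r$ length-decreasing steps without annihilating it. You fold both directions into one sentence; they are not symmetric and the converse should be flagged as resting on the same black box.
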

It follows from this proposition that $V(T(\vv))[\vv + z]$ is the $\CC$-span
of the derivative tableaux $D_\sigma(\vv + z)$. Notice that, through the map 
$D_\sigma (\vv + z) \mapsto \vv + \sigma(z)$, the support of $V(T(\vv))$ can 
be identified with $\vv + \ZZ^\mu_0$, and the Gelfand-Tsetlin multiplicity of 
$\vv + z$ is precisely the cardinality of its $S_\pi$-orbit. In 
\cite{FO14}*{Theorem 4.12 (c)} Futorny and Ovsienko proved that this is a 
bound for the dimension of the Gelfand-Tsetlin component of weight $\vv + z$ 
of a simple Gelfand-Tsetlin module and conjectured that the bound is sharp.
In particular these dimensions are bounded by $(n-1)!(n-2)! \cdots 2!$. As 
explained in the introduction, the conjecture can be proved using results
from \cite{FGRZ18} or \cite{EMV18}. We will refine this result in the following
sections.

\section{Cyclic submodules of universal tableaux modules}
Again, we fix a seed $\vv$ and set $\pi = \pi(\vv), \II = \II(\vv), \DD = 
\DD(\vv)$.

\paragraph
\about{The oriented graph of $\vv + z$}
\label{gt-chambers}
Recall that $\Omega(v)$ is the graph with vertex set $\Sigma$ and edges 
$[k,i] - [l,j]$ whenever $v_{k,i} - v_{l,j} \in \ZZ$ and $|k-l| \leq 1$; in
particular $\Omega(\vv) = \Omega(\vv + z)$ for all $z \in \ZZ^\mu_0$. In 
what follows we will define for each $z \in \DD$ an orientation of the 
graph $\Omega(\vv+z)$ and denote by $\overrightarrow \Omega (\vv+z)$ the 
resulting oriented graph. As usual, we will use the notation $[k,i] 
\rightarrow [l,j]$ for ``the oriented edge with tail $[k,i]$ and head 
$[l,j]$''. 

\begin{Definition}
\label{D:gt-chamber}
Let $z \in \DD$. The oriented graph $\overrightarrow \Omega (\vv+z)$ has 
$\Omega(\vv+z)$ as its underlying graph, and its orientation is subject to the 
following three rules.
\begin{itemize}
\item[(i)] 
If $[k,i] - [k,j]$ is an edge with $i < j$, then $[k,i] \rightarrow [k,j]$ is 
an edge of $\overrightarrow \Omega(\vv + z)$.

\item[(ii)] If $[k,i] - [k-1,j]$ is an edge such that $(\vv+z)_{k,i} - 
(\vv+z)_{k-1,j} \in \ZZ_{\geq 0}$, then $[k,i] \rightarrow [k-1,j]$ is an edge 
of $\overrightarrow \Omega(\vv + z)$.

\item[(iii)] If $[k,i] - [k-1,j]$ is an edge such that $(\vv + z)_{k,i} - 
(\vv + z)_{k-1,j} \in \ZZ_{<0}$, then $[k-1,j] \rightarrow [k,i]$ is an edge of
$\overrightarrow \Omega(\vv + z)$.
\end{itemize}
We denote by $\Omega^+(\vv+z)$ the subgraph of $\Omega(\vv+z)$ obtained by 
keeping only the edges of the form $[k,i] \rightarrow [k-1,j]$ in 
$\overrightarrow \Omega(\vv + z)$. Analogously we denote by $\Omega^-(\vv+ z)$ 
the subgraph obtained by keeping only the edges of the form $[k-1,j] 
\rightarrow [k,i]$ in $\overrightarrow \Omega(\vv + z)$. 
\end{Definition}
Notice that $\Omega^+(\vv + z)$ is an unoriented graph (though by its 
definition it is easy to recover the orientation of its edges in 
$\overrightarrow \Omega(\vv + z)$). Note also that the above definition can be 
considered as a refined version of the graph associated to a set of relations 
introduced in \S 4 of \cite{FRZ16a}. In the latter case, arrows between 
vertices on the $k$-th row are not allowed for $k < n$. 

It follows from the definition that $\overrightarrow \Omega(\vv + z)$ has no 
loops, so each of its connected components has at least one source (a vertex 
that is not the head of any edge) and at least one sink (a vertex that is not 
the tail of any edge). It also follows that $\Omega^-(\vv)$ is the graph
with vertex set $\Sigma$ and no edges, while for every $z \in \DD$ any edge 
of $\Omega^+(\vv)$ is an edge of either $\Omega^+(\vv + z)$ or of 
$\Omega^-(\vv + z)$.

Fix $z \in \DD$ and let $\overrightarrow \Omega = \overrightarrow 
\Omega(\vv+z)$. We next introduce a reduced version of the graph 
$\overrightarrow \Omega$ from which it can be recovered. We say that 
a directed edge $[k,i] \rightarrow [l,j]$ in $\overrightarrow \Omega$ is 
\emph{superfluous} if there exists a path of directed edges $[k,i] = [k_0, 
i_0] \rightarrow [k_1, i_1] \rightarrow \cdots \rightarrow [k_r, i_r] = [l,j]$ 
with $r > 1$. The \emph{reduced graph of $\vv+z$}, denoted by $\widetilde
\Omega(\vv+z)$, is the oriented subgraph obtained by removing all superfluous 
edges. 

Since $\overrightarrow\Omega(\vv+z)$ is a directed graph without loops, so is 
$\widetilde\Omega(\vv+z)$. We recover $\overrightarrow\Omega(\vv+z)$ from 
$\widetilde\Omega(\vv+z)$ by adding a directed edge $[k,i] \rightarrow 
[l,j]$ whenever there is a path from $[k,i]$ to $[l,j]$ in 
$\widetilde\Omega(\vv+z)$ and $|k-l| \leq 1$. Thus given $y \in \DD$ we have 
that $\overrightarrow \Omega(\vv + z) = \overrightarrow \Omega(\vv + y)$ if 
and only if $\widetilde \Omega(\vv + z) = \widetilde \Omega(\vv + y)$.

\begin{Example}
\label{e:graphs}
Below we show a few examples of reduced graphs $\widetilde \Omega(\vv + z)$. 

\vskip10pt

\begin{tabular}{|c|c|}
\hline
$\vv + z$ & $\widetilde \Omega(\vv+z)$
\\
\hline
\begin{tikzpicture}

\node (41) at (-1.5,2) {$0$};
\node (42) at (-0.5,2) {$0$};
\node (43) at (0.5,2) {$0$};
\node (44) at (1.5,2) {$0$};

\node (31) at (-1,1.5) {$0$};
\node (32) at (0,1.5) {$0$};
\node (33) at (1,1.5) {$0$};

\node (21) at (-.5,1) {$0$};
\node (22) at (.5,1) {$0$};

\node (11) at (0,0.5) {$0$};
\end{tikzpicture}
&
\tikzstyle{place}=[circle,draw=black,fill=black,
   inner sep=1pt,minimum size=1mm]
\begin{tikzpicture}
\node (41) at (-1.5,2) [place] {};
\node (42) at (-0.5,2) [place] {};
\node (43) at (0.5,2) [place] {};
\node (44) at (1.5,2) [place] {};

\node (31) at (-1,1.5) [place] {};
\node (32) at (0,1.5) [place] {};
\node (33) at (1,1.5) [place] {};

\node (21) at (-.5,1) [place] {};
\node (22) at (.5,1) [place] {};

\node (11) at (0,0.5) [place] {};


\draw [->] (41) edge (42) (42) edge (43) (43) edge (44) (44) edge (31) 
	(31) edge (32) (32) edge (33) (33) edge (21) (21) edge (22) (22) edge 
	(11); 
\end{tikzpicture}\\
\hline
\begin{tikzpicture}

\node (41) at (-1.5,2) {$3$};
\node (42) at (-0.5,2) {$2$};
\node (43) at (0.5,2) {$1$};
\node (44) at (1.5,2) {$0$};

\node (31) at (-1,1.5) {$2$};
\node (32) at (0,1.5) {$1$};
\node (33) at (1,1.5) {$0$};

\node (21) at (-.5,1) {$1$};
\node (22) at (.5,1) {$0$};

\node (11) at (0,0.5) {$0$};
\end{tikzpicture}
&
\tikzstyle{place}=[circle,draw=black,fill=black,
   inner sep=1pt,minimum size=1mm]
\begin{tikzpicture}

\node (41) at (-1.5,2) [place] {};
\node (42) at (-0.5,2) [place] {};
\node (43) at (0.5,2) [place] {};
\node (44) at (1.5,2) [place] {};

\node (31) at (-1,1.5) [place] {};
\node (32) at (0,1.5) [place] {};
\node (33) at (1,1.5) [place] {};

\node (21) at (-.5,1) [place] {};
\node (22) at (.5,1) [place] {};

\node (11) at (0,0.5) [place] {};

\draw [->] 
	(41) edge (42) (42) edge (31) (31) edge (43) (43) edge (32) 
	(32) edge (44) (44) edge (33) (32) edge (21) (21) edge (33) (33) edge (22)
	(22) edge (11);
\end{tikzpicture}\\
\hline
\begin{tikzpicture}
\node (51) at (-2,2.5) {$a+1$};
\node (52) at (-1,2.5) {$a$};
\node (53) at (0,2.5) {$1$};
\node (54) at (1,2.5) {$0$};
\node (55) at (2,2.5) {$b$};

\node (41) at (-1.5,2) {$a+1$};
\node (42) at (-0.5,2) {$a$};
\node (43) at (0.5,2) {$b$};
\node (44) at (1.5,2) {$b-1$};

\node (31) at (-1,1.5) {$c+1$};
\node (32) at (0,1.5) {$c$};
\node (33) at (1,1.5) {$c$};

\node (21) at (-.5,1) {$a$};
\node (22) at (.5,1) {$a-1$};

\node (11) at (0,0.5) {$a+1$};
\end{tikzpicture}
&
\tikzstyle{place}=[circle,draw=black,fill=black,
   inner sep=1pt,minimum size=1mm]
\begin{tikzpicture}
\node (51) at (-2,2.5) [place] {};
\node (52) at (-1,2.5) [place] {};
\node (53) at (0,2.5) [place] {};
\node (54) at (1,2.5) [place] {};
\node (55) at (2,2.5) [place] {};

\node (41) at (-1.5,2) [place] {};
\node (42) at (-0.5,2) [place] {};
\node (43) at (0.5,2) [place] {};
\node (44) at (1.5,2) [place] {};

\node (31) at (-1,1.5) [place] {};
\node (32) at (0,1.5) [place] {};
\node (33) at (1,1.5) [place] {};

\node (21) at (-.5,1) [place] {};
\node (22) at (.5,1) [place] {};

\node (11) at (0,0.5) [place] {};

\draw [->] 
	(51) edge (41) (41) edge (52) (52) edge (42) (53) edge (54) 
	(55) edge (43) (43) edge (44) (31) edge (32) (32) edge (33) (11) edge (21)
	(21) edge (22);
\end{tikzpicture}
\\
\hline
\end{tabular}

\end{Example}
\medskip
\paragraph
\about{Cyclic submodules}
We now begin with our study of the internal structure of $V(T(\vv))$. We use 
the notation $\Omega^\pm(x) \subset \Omega^\pm(y)$ to indicate that the edge
set of $\Omega^\pm(x)$ is contained in that of $\Omega^\pm(y)$. We also denote 
by $\emptyset$ the graph with vertex set $\Sigma$ and no edges. The following 
lemma is a generalization of \cite{FGR15}*{Theorem 6.8}.

\begin{Lemma}
\label{L:omega-contained}
Let $y,z \in \DD$. If $\Omega^+(\vv+z) \subset \Omega^+(\vv+y)$, or 
equivalently if $\Omega^-(\vv + y) \subset \Omega^-(\vv+z)$, then the
following hold.
\begin{enumerate}[(a)]
\item 
\label{i:e}
$D_{e}(\vv + y) \in U D_{e}(\vv + z)$.

\item 
\label{i:omega}
$D_{\omega_0^y}(\vv + y) \in U D_{\omega_0^z}(\vv + z)$.
\end{enumerate}
\end{Lemma}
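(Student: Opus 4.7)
The plan is to induct on the $\ell^1$-distance $|y-z|_1$, relying on a preparatory combinatorial lemma to reduce to a single step. The combinatorial lemma I would establish first states that under the hypothesis there is $z^{(1)} \in \DD$ with $z^{(1)} = z \pm \delta^{k,i}$ for some $(k,i) \in \Sigma'$, satisfying $|y - z^{(1)}|_1 < |y - z|_1$, the sandwich inclusion $\Omega^+(\vv + z) \subset \Omega^+(\vv + z^{(1)}) \subset \Omega^+(\vv + y)$, and $e_I(\vv + z) \neq 0$ (resp. $f_I(\vv + z) \neq 0$) for the interval $I \in \II(\vv, z)[k]$ containing $(k,i)$ at $i = a(I)$ (resp. $i = b(I)$). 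Iterating this produces a path in $\DD$ from $z$ to $y$ along which each single step can be handled separately, reducing both (a) and (b) to that case.

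For a single step $z' = z + \delta^{k, a(I)}$ with $e_I(\vv + z) \neq 0$ (the case $z' = z - \delta^{k, b(I)}$ is symmetric, using $E_{k+1,k}$ and $f_I$), I would prove part (a) as follows. Apply $E_{k, k+1}$ to $D_e(\vv + z)$ and project via $p_{\vv + z'}$. By Theorem \ref{T:gt-big-module} only the summand indexed by $I$ survives (other intervals of $\II(\vv,z)[k]$ contribute in different weight spaces), and one gets
\[
p_{\vv+z'}\bigl(E_{k,k+1} D_e(\vv+z)\bigr) = -\sum_{\tau \leq \alpha(I)} \D^{\vv + z}_{\tau, \alpha(I)}(e_I)\, D_\tau(\vv + z').
\]
A routine check of the block structure of $\II(\vv, z')$ shows that $\alpha(I)$ is a $z'$-shuffle, so the coefficient at $D_{\alpha(I)}(\vv + z')$ equals $-e_I(\vv + z) \neq 0$, and the projection is nonzero. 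By Lemma \ref{L:sub-gt} it lies in $U \cdot D_e(\vv + z) \cap V(T(\vv))[\vv + z']$. Proposition \ref{P:gt-weight-spaces}(c) identifies the $\Gamma$-socle of the finite-dimensional weight space $V(T(\vv))[\vv + z']$ as the one-dimensional space $\CC \cdot D_e(\vv + z')$; any nonzero $\Gamma$-submodule must contain this socle, so $D_e(\vv + z') \in U \cdot D_e(\vv + z)$, closing the induction for (a).

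For part (b), Lemma \ref{L:omega-delta}(c) produces $\sigma \in S_\pi^z$ with $\omega_0^{z'} = \sigma \alpha(I)$. By the $\Gamma$-cyclicity of $D_{\omega_0^z}(\vv + z)$ in $V(T(\vv))[\vv + z]$ from Proposition \ref{P:gt-weight-spaces}(b), we have $D_\sigma(\vv + z) \in U \cdot D_{\omega_0^z}(\vv + z)$. Applying $E_{k, k+1}$ and projecting onto $\vv + z'$ produces an element of $U \cdot D_{\omega_0^z}(\vv + z) \cap V(T(\vv))[\vv + z']$ whose coefficient at $D_{\sigma \alpha(I)}(\vv + z') = D_{\omega_0^{z'}}(\vv + z')$ is $-e_I(\vv + z) \neq 0$. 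A second application of Proposition \ref{P:gt-weight-spaces}(b), now at $\vv + z'$, tells us that such an element is a $\Gamma$-cyclic generator of $V(T(\vv))[\vv + z']$, whence $D_{\omega_0^{z'}}(\vv + z') \in U \cdot D_{\omega_0^z}(\vv + z)$.

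The principal obstacle is the preparatory combinatorial lemma. The hypothesis is engineered precisely to make it work: if one attempts a move toward $y$ at some $(k, i)$ for which the corresponding $e_I$ or $f_I$ vanishes, this vanishing amounts to an equality $(\vv + z)_{k, i} = (\vv + z)_{k \pm 1, j}$, i.e. an oriented edge sitting in $\Omega^+(\vv + z)$ that the inclusion forces to persist in $\Omega^+(\vv + y)$, constraining $y$. Producing an admissible step at each stage requires a careful case analysis of how the interval partition $\II(\vv, z)$ and the oriented graph $\overrightarrow\Omega(\vv + z)$ evolve under single-coordinate shifts, but the orientation hypothesis should ultimately guarantee that an admissible move toward $y$ always exists.
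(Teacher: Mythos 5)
Your overall strategy coincides exactly with the paper's: induct on the $\ell^1$-distance $|y-z|$, take one admissible step $z \mapsto z' = z \pm \delta^{k,i}$, apply $E_{k,k+1}$ (resp.\ $E_{k+1,k}$) and project to the target Gelfand-Tsetlin component, deduce nonvanishing of the leading coefficient $e_I(\vv+z)$ (resp.\ $f_I(\vv+z)$), and conclude via Proposition~\ref{P:gt-weight-spaces}(\ref{i:eigenvalue}) for part~(a) and Proposition~\ref{P:gt-weight-spaces}(\ref{i:generator}) together with Lemma~\ref{L:omega-delta}(\ref{i:omega-delta}) for part~(b). Your single-step arguments are correct, and the appeal to the one-dimensionality of the $\Gamma$-socle of $V(T(\vv))[\vv+z']$ (guaranteed by uniqueness of the $\Gamma$-eigenvector) is a valid reformulation of what the paper does.

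The genuine gap is that the ``preparatory combinatorial lemma'' --- the existence of a coordinate $(k,i)$ and a sign so that $u=z\pm\delta^{k,i}$ lies in $\DD$, satisfies $|y-u|<|y-z|$, keeps the sandwich $\Omega^+(\vv+z)\subset\Omega^+(\vv+u)\subset\Omega^+(\vv+y)$, \emph{and} makes $e_I(\vv+z)$ or $f_I(\vv+z)$ nonzero --- is not actually proved; you only assert that a ``careful case analysis'' of how $\II(\vv,z)$ and $\overrightarrow\Omega(\vv+z)$ evolve should yield it. This is the technical heart of the lemma, and it is not at all automatic which coordinate one may move. The concrete construction the paper uses, and which your sketch is missing, is to form the two induced subgraphs $\Omega_<$ and $\Omega_>$ of $\overrightarrow\Omega(\vv+y)$ on the vertex sets $\{[k,i] : z_{k,i}<y_{k,i}\}$ and $\{[k,i] : z_{k,i}>y_{k,i}\}$; since $\overrightarrow\Omega(\vv+y)$ has no oriented cycles, a nonempty $\Omega_<$ has a \emph{source} $[k,i]$ (then take $u = z+\delta^{k,i}$), and otherwise a nonempty $\Omega_>$ has a \emph{sink} $[k,i]$ (then take $u = z-\delta^{k,i}$). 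It is precisely the source (resp.\ sink) property, combined with $y\in\DD$ and the inclusion hypothesis, that lets one verify simultaneously that $u\in\DD$, that the sandwich inclusion holds, and that the would-be vanishing of $e_I$ (equivalently a coincidence $(\vv+z)_{k,i}=(\vv+z)_{k+1,j}$, hence an edge of $\Omega^+(\vv+z)\subset\Omega^+(\vv+y)$) contradicts $[k,i]$ being a source of $\Omega_<$. Without producing this choice and these verifications, the induction does not close.
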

\begin{proof}
We apply induction on $|y-z| = \sum_{(k,i) \in \Sigma} |y_{k,i} - 
z_{k,i}|$. That is, we will show that it is possible to choose $(k,i) \in 
\Sigma$ and $u = z \pm \delta^{k,i}$ with the sign chosen so that $|y-u|
<|y-z|$ and the following hold:
\begin{enumerate}
\item $u \in \DD$,

\item $\Omega^+(\vv+z) \subset \Omega^+(\vv+u) \subset \Omega^+(\vv+y)$,

\item $D_{e}(\vv + u) \in U D_{e}(\vv + z)$, and

\item $D_{\omega_0^u}(\vv + u) \in U D_{\omega_0^z}(\vv + z)$.
\end{enumerate}
Clearly the lemma follows from the existence of such $u$.

Denote by $\Omega_<$, respectively $\Omega_>$, the induced subgraph of 
$\overrightarrow \Omega(\vv+y)$ with vertex set consisting of those $[k,i]$ 
such that $z_{k,i} < y_{k,i}$, respectively $z_{k,i} > y_{k,i}$; here induced 
means that there is a directed edge between two vertices of the subgraph if 
and only if there was a directed edge between these vertices in the original 
graph. Notice that no vertex of the form $[n,i]$ is in either graph. If both 
$\Omega_<$ and $\Omega_>$ are empty then $y = z$ and there is nothing to 
prove. Suppose $\Omega_<$ is not empty. Then, since it is an oriented subgraph 
of $\overrightarrow \Omega(\vv+y)$, it has no loops and 
hence has at least one source, say $[k,i]$. We claim that (1), (2), (3) and 
(4) hold with $u = z + \delta^{k,i}$. If $\Omega_<$ is empty then we take 
$[k,i]$ to be a sink in $\Omega_>$ and set $u = z - \delta^{k,i}$. We now 
proceed with the proof assuming $\Omega_<$ is not empty. The other case is
similar and we discuss how to adapt the proof at each step.

\emph{Proof of (1)}. By Lemma \ref{L:omega-delta}(\ref{i:D-delta}),
it is enough to show that if $[k,i-1] \to [k,i]$ is an edge of $\overrightarrow
\Omega(\vv)$ then $z_{k,i-1} > z_{k,i}$. If this edge is indeed present then 
since $y \in \DD$ we know that $y_{k,i-1} \geq y_{k,i}$. On the other hand 
since $[k,i]$ is a source of $\Omega_{<}$ we must have $z_{k,i-1} \geq 
y_{k,i-1}$ and $z_{k,i} < y_{k,i}$, and these inequalities imply the one we
are looking for.

\emph{Proof of (2).} To show that $\Omega^+(\vv+z) \subset \Omega^+(\vv+u)$ 
it is enough to prove that if either $[k,i] - [k-1,j]$ or $[k+1,j] - [k,i]$ 
is an edge of $\Omega^+(\vv+z)$ then it is also an edge of $\Omega^+(\vv+u)$. 
The first case is obvious. For the second, the choice of $(k,i)$ as a source 
of $\Omega_<$ implies that $z_{k+1,j} = u_{k+1,j} \geq y_{k+1,j}$ while 
$u_{k,i}
= z_{k,i} + 1 \leq y_{k,i}$. Since $\Omega^+(\vv+z) \subset \Omega^+(\vv+y)$
we see that $u_{k+1,j} \geq y_{k+1,j} \geq y_{k,i} \geq u_{k,i}$, so the edge
$[k+1,j] - [k,i]$ is in $\Omega^+(\vv+u)$.

To show that $\Omega^+(\vv+ u) \subset \Omega^+(\vv+ y)$ we again need to 
consider only edges of the form $[k+1,j] - [k,i]$ and $[k,i] - [k-1,j]$ 
of the first graph. In the first case we have $z_{k+1,j} = u_{k+1,j} \geq
u_{k,i} = z_{k,i} + 1$, so $[k+1,j] - [k,i]$ is an edge of $\Omega^+(\vv+z)$,
and by the hypothesis it is also and edge of $\Omega^+(\vv+y)$. In the second 
case we have $z_{k,i} + 1 = u_{k,i} \geq u_{k-1,j} = z_{k-1,j}$. If the 
inequality is strict then $[k,i] - [k-1,j]$ is an edge of $\Omega^+(\vv+z)$, 
and hence of $\Omega^+(\vv+y)$. If on the other hand equality holds, then 
$z_{k,i} < z_{k-1,j}$ and so $[k-1,j] \to [k,i]$ is an edge of $\overrightarrow
\Omega(\vv+z)$. Since $[k,i]$ is a source of $\Omega_<$, we must have that 
$y_{k-1,j} \leq z_{k-1,j} = z_{k,i} + 1 \leq y_{k,i}$, and hence $[k,i] - 
[k-1,j]$ is an edge of $\Omega^+(\vv + y)$.

\emph{Proof of (3).} Let $I \in \II(\vv,z)$ be the interval containing 
$(k,i)$. It follows from the definitions that $\alpha(I)$ is a $u$-shuffle.
Using the formulas for the action of $U$ as given in Remark \ref{R:formulas}, 
we see that
\begin{align*}
p_{\vv+u}&(E_{k,k+1} D_e(\vv + z)) =
	e_I(\vv + z) D_{\alpha(I)}(\vv + u) + D_{<\alpha(I)}(\vv + u),
\end{align*}
where $p_v$ is the projection to the Gelfand-Tsetlin component $V(T(\vv))[v]$. 
By Lemma \ref{L:sub-gt}, this element lies in $U D_e(\vv + z)$. 

If $e_I(\vv + z) = 0$, then there is some $(k+1,j) \in \Sigma$ such that 
$z_{k+1,j} = z_{k,i}$, which implies that $[k+1,j] - [k,i]$ is an edge of 
$\Omega^+(\vv+ z)$ and, hence, $y_{k+1,j} \geq y_{k,i}$. On the other hand
since $[k,i]$ is a source of $\Omega_<$ we have $z_{k,i} < y_{k,i} \leq 
y_{k+1,j} \leq z_{k+1,j}$, which leads to a contradiction. 
Thus we see that $p_{\vv+u}(E_{k,k+1} D_e(\vv + z)) \neq 0$. By Proposition 
\ref{P:gt-weight-spaces}(\ref{i:eigenvalue}) this implies that $D_e(\vv + u) 
\in U D_e(\vv + z)$. The proof in the second case is analogous, except that we 
must look at $p_{\vv+u}(E_{k+1,k} D_e(\vv + z))$, and instead of $e_I(\vv+z) 
\neq 0$, we show that $f_I(\vv + z) \neq 0$.

\emph{Proof of (4).} Recall from Lemma
\ref{L:omega-delta}(\ref{i:omega-delta}) that there exists $\sigma\in S_\pi^z$ 
such that $\omega_0^u = \sigma\alpha(I)$. By Proposition 
\ref{P:gt-weight-spaces} (\ref{i:generator}), $D_\sigma(\vv + z) \in 
U D_{\omega_0^z}(\vv + z)$. Again by the formulas in Remark \ref{R:formulas} 
and by Lemma \ref{L:sub-gt},
\begin{align*}
p_{\vv+u}&(E_{k,k+1} D_\sigma(\vv + z)) =\\
	&e_I(\vv + z) D_{\sigma\alpha(I)}(\vv + u) + D_{<\alpha(I)}(\vv + u)
	\in U D_{\omega_0^z}(\vv+z).
\end{align*}
As we saw in the proof of the previous point, the leading coefficient is 
nonzero.
By Proposition \ref{P:gt-weight-spaces}(\ref{i:generator}) this element 
generates the Gelfand-Tsetlin weight component $V(T(\vv))[\vv + u]$, so in 
particular $D_{\omega_0^u}(\vv + u) \in U D_{\omega_0^z}(\vv + z)$. For the 
second case we must take $p_{\vv+u}(E_{k,k+1} D_\tau(\vv + z))$, with $\tau$ 
as in Lemma \ref{L:omega-delta}(\ref{i:omega-delta}), and then the rest of the 
proof is similar.
\end{proof}

\paragraph
\about{The socle of $V(T(\vv))$}
An element $z \in \DD$ is said to be \emph{fully critical} if $z_{k,i} = 
z_{k,j}$ whenever $[k,i] - [k,j]$ is an edge of $\Omega(\vv + z)$. 
Equivalently, $z$ is fully critical if and only if $(S_\pi)_z = S_\pi$. The 
following proposition shows that $V(T(\vv))$ is a cyclic module, and that it 
has a unique minimal module. Item \ref{i:simple} of the next Proposition is a 
special case of \cite{EMV18}*{Theorem 11} and \cite{FGRZ18}*{Theorem 8.5}. We
include it here for completeness.

\begin{Proposition}
\label{P:cyclic}
Let $z \in \DD$ be fully critical.
\begin{enumerate}[(a)]
\item 
\label{i:cyclic}
If $\Omega^+(\vv+ z) = \emptyset$ then $U D_e(\vv + z) = V(T(\vv))$. In 
particular $V(T(\vv))$ is a cyclic module.

\item
\label{i:minimal-module}
If $\Omega^-(\vv+z) = \emptyset$ then $U D_e(\vv + z)$ is simple and contained
in any other submodule of $V(T(\vv))$.

\item
\label{i:simple}
If $\Omega(\vv)$ has no edges of the form $[k,i] - [k-1,j]$ then $V(T(\vv))$ 
is simple.
\end{enumerate}
\end{Proposition}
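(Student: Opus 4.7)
The plan is to combine Lemma \ref{L:omega-contained}, which transports classical tableaux between Gelfand-Tsetlin components, with Proposition \ref{P:gt-weight-spaces}, which identifies each weight component $V(T(\vv))[\vv+y]$ as a cyclic $\Gamma$-module with distinguished cyclic generator $D_{\omega_0^y}(\vv+y)$ and unique (up to scalar) $\Gamma$-eigenvector $D_e(\vv+y)$. The crucial preliminary observation is that full criticality of $z$ forces $(S_\pi)_z = S_\pi$, whence $\omega_0^z = e$ and $D_{\omega_0^z}(\vv+z) = D_e(\vv+z)$. In particular both conclusions in Lemma \ref{L:omega-contained} applied to $z$ will land inside $U D_e(\vv+z)$.

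For part (a), I fix an arbitrary $y \in \DD$. The assumption $\Omega^+(\vv+z) = \emptyset$ trivially yields $\Omega^+(\vv+z) \subset \Omega^+(\vv+y)$, so Lemma \ref{L:omega-contained}(b) produces $D_{\omega_0^y}(\vv+y) \in U D_e(\vv+z)$. By Proposition \ref{P:gt-weight-spaces}(b) this element is a cyclic $\Gamma$-generator of $V(T(\vv))[\vv+y]$, so the whole weight component lies in $U D_e(\vv+z)$. Since the derivative tableaux furnish a basis of $V(T(\vv))$, we have $V(T(\vv)) = \bigoplus_{y \in \DD} V(T(\vv))[\vv+y]$, and summing over $y$ gives $U D_e(\vv+z) = V(T(\vv))$.

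For part (b), the strategy is to show that $U D_e(\vv+z)$ is contained in every nonzero submodule $N \subset V(T(\vv))$; simplicity then follows, since any nonzero submodule of $U D_e(\vv+z)$ is itself a nonzero submodule of $V(T(\vv))$ and hence contains $U D_e(\vv+z)$. By Lemma \ref{L:sub-gt}, $N$ is a Gelfand-Tsetlin module, so some generalized weight space $N[\m]$ is nonzero; picking $x \in N[\m]$ with $\m^k x = 0$ for minimal $k$, the element $\m^{k-1}x$ is a nonzero simultaneous $\Gamma$-eigenvector inside $N$. Proposition \ref{P:gt-weight-spaces}(c) identifies every such eigenvector as a scalar multiple of some classical tableau $D_e(\vv+y)$ with $y \in \DD$, so $N$ contains $D_e(\vv+y)$. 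The hypothesis $\Omega^-(\vv+z) = \emptyset$ gives $\Omega^-(\vv+z) \subset \Omega^-(\vv+y)$, equivalently $\Omega^+(\vv+y) \subset \Omega^+(\vv+z)$, so Lemma \ref{L:omega-contained}(a), applied with $y$ and $z$ reversed, yields $D_e(\vv+z) \in U D_e(\vv+y) \subset N$.

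For part (c), take $z = 0$, which lies in $\DD$ and is fully critical since $0_{k,i} = 0_{k,j}$ trivially. The hypothesis that $\Omega(\vv)$ has no edges between consecutive rows forces $\Omega^+(\vv) = \Omega^-(\vv) = \emptyset$, so parts (a) and (b) combine to give that $V(T(\vv)) = U D_e(\vv)$ is simple. The delicate step is the extraction of a specific classical tableau from an arbitrary nonzero submodule in (b); this is where the uniqueness statement in Proposition \ref{P:gt-weight-spaces}(c) is essential, and once this is in place the rest is bookkeeping with the containment direction in Lemma \ref{L:omega-contained}.
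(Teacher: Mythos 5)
Your proof is correct and follows essentially the same path as the paper's. Part (a) is identical: with $z$ fully critical you have $\omega_0^z = e$, Lemma~\ref{L:omega-contained}(\ref{i:omega}) delivers $D_{\omega_0^y}(\vv+y)$ into $UD_e(\vv+z)$, and Proposition~\ref{P:gt-weight-spaces}(\ref{i:generator}) then picks up the whole weight component. Part (b) is also the paper's argument; you simply spell out more explicitly the step the paper compresses into ``every submodule of $V(T(\vv))$ contains some tableaux of this form'', namely: a nonzero submodule $N$ is a Gelfand-Tsetlin module by Lemma~\ref{L:sub-gt}, so it has a nonzero generalized weight space, hence (by iterating $\m$) a nonzero simultaneous $\Gamma$-eigenvector, which by Proposition~\ref{P:gt-weight-spaces}(\ref{i:eigenvalue}) must be a nonzero scalar multiple of some $D_e(\vv+y)$. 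Your direction-of-containment bookkeeping with Lemma~\ref{L:omega-contained}(\ref{i:e}) is also correct. Part (c) via $z=0$ is fine, since then $\Omega^+(\vv) = \Omega^-(\vv) = \emptyset$ and (a), (b) immediately combine.

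The one thing you omit is the brief observation the paper makes at the start of its proof of (a): to get the unconditional ``in particular, $V(T(\vv))$ is a cyclic module'', one must exhibit \emph{some} fully critical $z\in\DD$ with $\Omega^+(\vv+z)=\emptyset$. This is easy (e.g.\ take $z_{k,i} = (n-k)M$ for $M$ large enough), and in fact your proof of (c) uses $z=0$ in the opposite extreme, but the existence statement should be recorded if you want the ``in particular'' conclusion to stand on its own. Also, a cosmetic point: in (b), $\m^{k-1}x$ is a subspace, not a single vector; you want ``pick a nonzero element of $\m^{k-1}x$''. Neither of these is a substantive gap.
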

\begin{proof}
Notice that it is always possible to find fully critical elements $z \in 
\DD$ satisfying the hypothesis of the first item. If $z$ is such an 
element, being fully critical, then $\omega_0^z = e$. Also by Lemma 
\ref{L:omega-contained} (\ref{i:omega}), every derivative tableau 
$D_{\omega_0^y}(\vv + y)$ with $y \in \DD$ is in $U D_e(\vv+z)$. Thus 
\[
U D_e(\vv+z)[\vv + y] 
	\supset \Gamma D_{\omega_0^y}(\vv+y) 
	= V(T(\vv))[\vv+y]
\]
and this proves part (\ref{i:cyclic}).

Assume now that $\Omega^-(\vv+z) = \emptyset$, and set $N = U D_e(\vv + z)$. 
Then for any $y \in \DD$ we have $\Omega^+(\vv+y)\subset\Omega^+(\vv+z)$, 
so by Lemma \ref{L:omega-contained}(\ref{i:e}) $N \subset U D_e(\vv + y)$.
Since every submodule of $V(T(\vv))$ contains some tableaux of this form, it
follows that $N$ is contained in every submodule of $V(T(\vv))$ which 
proves part (\ref{i:minimal-module}). Part (\ref{i:simple}) is an easy 
consequence of (\ref{i:cyclic}) and (\ref{i:minimal-module}).
\end{proof}

As mentioned in the proof of the Proposition, if $\Omega^-(\vv+z) = \emptyset$
then $U D_e(\vv + z)$ is minimal among the submodules of $V(T(\vv))$. 
\begin{Corollary}
The socle of $V(T(\vv))$ is simple and equal to $U D_e(\vv + z)$ for any $z$
such that $\Omega^-(\vv+z) = \emptyset$. In particular, 
$\soc~V(T(\vv)) = U D_e(\vv)$. 
\end{Corollary}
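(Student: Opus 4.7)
The plan is to obtain the corollary directly from Proposition~\ref{P:cyclic}(\ref{i:minimal-module}) by first applying it at $z = 0$, and then to propagate the conclusion to every admissible $z$ by means of Lemma~\ref{L:omega-contained}(\ref{i:e}). First I would observe that $z = 0$ lies in $\DD(\vv)$ and is vacuously fully critical (all of its entries equal $0$), and that $\Omega^-(\vv) = \emptyset$ as noted immediately after Definition~\ref{D:gt-chamber}. Applying Proposition~\ref{P:cyclic}(\ref{i:minimal-module}) to this choice then yields that $U D_e(\vv)$ is simple and contained in every nonzero submodule of $V(T(\vv))$; in particular, the socle of $V(T(\vv))$ exists, is simple, and coincides with $U D_e(\vv)$.

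Next, to handle an arbitrary $z \in \DD$ with $\Omega^-(\vv + z) = \emptyset$, I would invoke the equivalence in Lemma~\ref{L:omega-contained}: since both $\Omega^-(\vv)$ and $\Omega^-(\vv + z)$ are empty, the inclusions $\Omega^-(\vv + z) \subset \Omega^-(\vv)$ and $\Omega^-(\vv) \subset \Omega^-(\vv + z)$ hold trivially. Applying Lemma~\ref{L:omega-contained}(\ref{i:e}) in each direction would then produce $D_e(\vv) \in U D_e(\vv + z)$ and $D_e(\vv + z) \in U D_e(\vv)$, whence $U D_e(\vv + z) = U D_e(\vv)$. This identifies each such cyclic submodule with the socle, proving both assertions of the corollary.

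I do not expect a substantive obstacle here, as the argument is a short splicing of Proposition~\ref{P:cyclic} and Lemma~\ref{L:omega-contained}. The only verification that merits care is checking that both hypotheses of Proposition~\ref{P:cyclic}(\ref{i:minimal-module}) hold at $z = 0$: full criticality is immediate from the definition, while $\Omega^-(\vv) = \emptyset$ is precisely a structural feature of the seed $\vv$ recorded in the preceding discussion.
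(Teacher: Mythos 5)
Your proof is correct, and it is organized somewhat differently from the paper's. The paper's (implicit) justification is the remark preceding the Corollary: the proof of Proposition \pref{P:cyclic}(\ref{i:minimal-module}) never actually invokes the fully critical hypothesis, so its conclusion holds for \emph{every} $z$ with $\Omega^-(\vv+z)=\emptyset$, and the Corollary simply reads off that observation. You instead take Proposition \pref{P:cyclic}(\ref{i:minimal-module}) at face value, apply it only at the point $z=0$ (where the hypotheses are clearly satisfied: $0\in\DD(\vv)$, every $z_{k,i}$ equals every other so $(S_\pi)_0 = S_\pi$, and $\Omega^-(\vv)=\emptyset$ by the seed property), and then identify $U D_e(\vv+z)$ with $U D_e(\vv)$ by applying Lemma \pref{L:omega-contained}(\ref{i:e}) in both directions using the trivial containments $\emptyset\subset\emptyset$. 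Both arguments rest on the same engine (Lemma \pref{L:omega-contained}), but yours has the advantage of using the Proposition as stated rather than requiring the reader to reopen its proof and notice that a stated hypothesis was unused; the paper's version saves a line at the cost of that slight inelegance. One very small stylistic quibble: calling $z=0$ ``vacuously'' fully critical is a touch off, since the condition $z_{k,i}=z_{k,j}$ is not vacuously true but simply holds because all entries are $0$; the conclusion is unaffected.
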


\section{The essential support of the socle and a proof of the Strong
Futorny-Ovsienko conjecture}
As before, $\vv$ is a fixed seed and $\pi = \pi(\vv), \II = \II(\vv), \DD = 
\DD(\vv)$.

\paragraph
\about{The essential support of the socle of $V(T(\vv))$} 
\label{subsec-socle}
We denote the socle of $V(T(\vv))$ by $V_\soc$. As mentioned above $V_\soc$ is 
simple. The dimension of the Gelfand-Tsetlin component $V_\soc[\vv+z]$ is 
bounded by $\dim V[\vv+z] = |S_\pi^z|$ (this is the same 
bound given in \cite{FO14}*{Theorem 4.12 (c)}). 
\begin{Definition}
Let $z \in \DD$ and let $M$ be a simple Gelfand-Tsetlin module such that 
$M[\vv + z] \neq 0$. We say that $\vv + z$ is in the \emph{essential support} 
of $M$ if $\dim M[\vv+z] = |S_\pi^z|$. We denote the essential support of $M$
by $\essupp M$.
\end{Definition}
We introduce the following notations
\begin{align*}
\supp(\vv)
	&= \{z \in \DD \mid \dim V_\soc[\vv+z] \neq 0\} \\
\ess(\vv)
	&= \{z \in \DD \mid \vv + z \in \essupp V_\soc \}
\end{align*}
In view of the fact that $V_\soc$ is simple, its essential support is the set 
of those $z \in \DD$ for which the dimension of $V_\soc[\vv+z]$ is as large as 
the Futorny-Ovsienko bound allows. 

\paragraph
\about{Cones associated to the support}
Denote by $\RR^\mu$ the set of points in $\CC^\mu$ with real coordinates. 
Recall that a rational polyhedral cone is the intersection of finitely many
half-spaces $\{x \in \RR^\mu \mid \phi(x) \geq q\}$ where $\phi$ is a linear 
functional with rational coefficients in the canonical basis and $q \in \QQ$. 
The \emph{rank} of a cone is the dimension of the smallest affine space that
contains it.

Set 
\begin{align*}
	\mathcal S &= \{\Omega^+(\vv+z) \mid z \in \supp(\vv)\} \\
	\mathcal E &= \{\Omega^+(\vv+z) \mid z \in \ess(\vv)\}. 
\end{align*} 
These are finite sets, which we order by the relation of being a subgraph. 
For each $\Omega \subset \Omega^+(\vv)$ we set
\begin{align*}
\P(\Omega) 
	&= \{z \in \DD \mid \Omega = \Omega^+(\vv+z)\};\\
\overline \P(\Omega) 
	&= \{z \in \DD \mid \Omega \subset \Omega^+(\vv+z)\}.
\end{align*}
By Lemma \ref{L:omega-contained} if $z \in \supp(\vv)$ then 
$\P(\Omega^+(\vv + z)) \subset \overline \P(\Omega^+(\vv + z)) \subset 
\supp(\vv)$, and the same holds if we replace $\supp(\vv)$ with $\ess(\vv)$. 
Denoting by $\mathcal S_0$ and $\mathcal E_0$ the set of minimal elements of 
$\mathcal S$ and $\mathcal E$ respectively we see that
\begin{align*}
\supp(\vv) 
	&= \bigcup_{\Omega \in \mathcal S} \P(\Omega)
	= \bigcup_{\Omega \in \mathcal S_0} \overline \P(\Omega); \\
\ess(\vv) 
	&= \bigcup_{\Omega \in \mathcal E} \P(\Omega)
	= \bigcup_{\Omega \in \mathcal E_0} \overline \P(\Omega).
\end{align*}

Let $w\in \DD$ and $z \in \ZZ^\mu_0$, and put $\overrightarrow \Omega = 
\overrightarrow \Omega(\vv + w)$. We have that $z \in \P(\Omega^+(\vv + w))$ 
if and only if it satisfies the following conditions for all $k < n$:
\begin{enumerate}[$(a)$]
\item if $[n,i] \to [n-1,j]$ is a directed edge of $\overrightarrow \Omega$
then $z_{n-1,j} \leq \vv_{n,i} - \vv_{n-1,j}$;

\item if $[n-1,j] \to [n,i]$ is a directed edge of $\overrightarrow \Omega$
then $z_{n-1,j} > \vv_{n,i} - \vv_{n-1,j}$;

\item if $[k,i] \to [k-1,j]$ is a directed edge of 
$\overrightarrow \Omega$ then $z_{k-1,j} \leq z_{k,j}$ ;

\item if $[k-1,i] \to [k,j]$ is a directed edge of 
$\overrightarrow \Omega$ then $z_{k-1,j} > z_{k,j}$;

\item if $[k,i] \to [k,i+1]$ is a directed edge of 
$\overrightarrow \Omega$ then $z_{k,i} \geq z_{k,i+1}$.
\end{enumerate}
Furthermore, $z \in \overline \P(\Omega^+(\vv + w))$ if and only if it 
satisfies conditions $(a), (c)$ and $(e)$ (this last condition guarantees that 
$z \in \DD$). It follows that both $\P(\Omega^+(\vv + w))$ and $\overline 
\P(\Omega^+(\vv + w))$ are the set of integral points of a rational polyhedral 
cone. Since these sets are contained in $\ZZ^\mu_0$ their rank is at most 
$\frac{n(n-1)}{2}$.

\begin{Theorem}
\label{T:essupp}
Both the support and the essential support of $V_\soc$ are the set of integral 
points of a finite union of polyhedral rational cones. Furthermore, one of 
these cones has rank $\frac{n(n-1)}{2}$.
\end{Theorem}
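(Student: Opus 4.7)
The first assertion is essentially in hand: we have established the decompositions
\[
\supp(\vv) = \bigcup_{\Omega \in \mathcal S_0} \overline{\P}(\Omega) \qquad\text{and}\qquad \ess(\vv) = \bigcup_{\Omega \in \mathcal E_0} \overline{\P}(\Omega),
\]
each $\overline{\P}(\Omega)$ was shown to be the set of integral points of a rational polyhedral cone cut out by the inequalities $(a)$, $(c)$, $(e)$, and $\mathcal S_0, \mathcal E_0$ are finite. For the rank statement my plan is to exhibit a single full-dimensional subset of $\ess(\vv)$; since both $\ess(\vv)$ and $\supp(\vv)$ are finite unions of rational polyhedral cones, at least one piece of each union must then have rank $\tfrac{n(n-1)}{2}$.

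I would take this subset to be the cone $\overline{\P}(\Omega^+(\vv + z_0))$ for a carefully chosen $z_0 \in \DD$. Setting $(z_0)_{k,i} = -N(n-k)$ with $N$ large, $z_0$ is constant on each $\II(\vv)$-block and is therefore fully critical, while the seed hypothesis together with the magnitude of $N$ forces $(\vv+z_0)_{k,i} \geq (\vv+z_0)_{k-1,j}$ on every vertical edge of $\Omega(\vv)$, so $\Omega^-(\vv+z_0) = \emptyset$. By Proposition~\ref{P:cyclic}(\ref{i:minimal-module}) this gives $V_\soc = U\,D_e(\vv + z_0)$, and $\Omega^+(\vv+z_0)$ is the maximum of $\mathcal S$, containing every vertical edge of $\Omega(\vv)$.

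The geometric heart of the plan is the inclusion $\overline{\P}(\Omega^+(\vv+z_0)) \subset \ess(\vv)$. For any $y$ in the left-hand side, $\Omega^+(\vv+z_0) \subset \Omega^+(\vv+y)$ combines with Lemma~\ref{L:omega-contained}(\ref{i:omega}) to place $D_{\omega_0^y}(\vv+y)$ inside $V_\soc$. By Proposition~\ref{P:gt-weight-spaces}(\ref{i:generator}) this tableau generates $V(T(\vv))[\vv+y]$ as a $\Gamma$-module, so $\Gamma$-stability of the submodule $V_\soc$ forces $V(T(\vv))[\vv+y] \subset V_\soc$, giving $\dim V_\soc[\vv+y] = |S_\pi^y|$ and hence $y \in \ess(\vv)$.

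Finally, to confirm that $\overline{\P}(\Omega^+(\vv+z_0))$ has rank $\tfrac{n(n-1)}{2}$ in $\RR^\mu_0 := \{x \in \RR^\mu \mid x_{n,i} = 0\}$, I would exhibit a point at which all defining inequalities are simultaneously strict. The choice $z^*_{k,i} = -N(n-k) - \epsilon i$ with $N \gg 0$ and $\epsilon > 0$ small is the obvious candidate: $(e)$ acquires slack $\epsilon$, $(c)$ acquires slack $N - \epsilon(i-j)$, and $(a)$ becomes strict once $N > \max_{i,j} |\vv_{n,i}-\vv_{n-1,j}|$. Consequently the cone has nonempty relative interior in $\RR^\mu_0$ and therefore rank $\tfrac{n(n-1)}{2}$. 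The only substantive obstacle in the plan is the cone inclusion in the third paragraph; it rests on combining Lemma~\ref{L:omega-contained}(\ref{i:omega}) with the $\Gamma$-cyclicity statement in Proposition~\ref{P:gt-weight-spaces}, while the remaining steps are bookkeeping on the explicit cone description already derived.
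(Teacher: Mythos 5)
Your proof is correct, and the overall skeleton matches the paper's: the first assertion is the already-established cone decomposition, and the rank claim reduces to showing that $\overline{\P}(\Omega^+(\vv))$ lies in $\ess(\vv)$ and has full rank $\tfrac{n(n-1)}{2}$. Two small observations on the bookkeeping: since $\vv$ is a seed, $\Omega^-(\vv) = \emptyset$ already, so you could take $z_0 = 0$ and skip the $N$-scaling in the second paragraph; and your third-paragraph inclusion $\overline{\P}(\Omega^+(\vv+z_0)) \subset \ess(\vv)$ is precisely the instance of the general containment $\overline{\P}(\Omega^+(\vv+z)) \subset \ess(\vv)$ for $z \in \ess(\vv)$ that the paper records just before the theorem, so you are re-deriving a fact already on the table. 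Where your route genuinely diverges is the rank argument: you exhibit an explicit interior point $z^*$ of the cone in $\RR^\mu_0$ and conclude full-dimensionality, whereas the paper instead identifies $\P(\Omega^+(\vv))$ as the free monoid on the $\tfrac{n(n-1)}{2}$ generators $c^{k,i}$ built from directed paths in $\overrightarrow{\Omega}(\vv)$. Your interior-point argument is cleaner for the sole purpose of this theorem, but it gives strictly less information: the paper's monoid description is an explicit parametrization of the cone that is reused later (notably in the proof of Theorem~\ref{thm-esssup Verma}), so the extra work pays off downstream.
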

\begin{proof}
We have already shown that both $\supp V_\soc$ and $\essupp V_\soc$ can be
written as a union of sets of the form $\overline \P(\Omega)$ for appropriate
subgraphs of $\Omega(\vv)$, so the first part of the theorem is proved.

For the second part, we will show show that $\P(\Omega^+(\vv)) = \overline 
\P(\Omega^+(\vv))$ is the set of integral points of a cone of rank 
$\frac{n(n-1)}{2}$. Given $1 \leq i \leq k \leq n-1$ let $c^{k,i} \in 
\ZZ^\mu_0$ be such that $c^{k,i}_{l,j}$ is $-1$ if there is a directed path 
in $\overrightarrow 
\Omega(\vv)$ from $[k,i]$ to $[l,j]$, and $0$ otherwise. Here we assume that 
$[k,i]$ is linked to itself by a trivial path, so $c^{k,i}_{k,i} = -1$. Then 
$\Omega^+(\vv+c^{k,i})=\Omega^+(\vv)$ and so $c^{k,i} \in \P(\Omega^+(\vv))$. 
Now if $z$ is any nonzero element in $\P(\Omega^+(\vv))$ and $z_{k,i} \neq 0$ 
then $\Omega^+(\vv + z - c^{k,i}) = \Omega^+(\vv)$. By induction 
$\P(\Omega^+(\vv))$ is the monoid generated by the $c^{k,i}$, and hence has 
the desired rank.
\end{proof}

Now we are ready to prove the Strong Futorny-Ovsienko Conjecture. For each 
standard parabolic subgroup $G \subset S_\pi$ there exists $z$ in 
$\P(\Omega(\vv))$ such that $G = (S_\pi)_z$. Indeed, the orbits of $G$ form
an interval partition of $\Sigma'$, and the desired element is obtained by 
taking the inequalities $(e)$ presented above as equalities whenever $(k,i), 
(k,i+1)$ lie in the same $G$-orbit, and as strict inequalities if they lie in 
different orbits. Thus we have the following result.

\begin{Corollary}
\label{cor-fo-conj}
Let $z \in \DD$ and let $G = (S_\pi)_z$. Then there exists $z' \in \ess(\vv)$ 
such that $(S_\pi)_{z'} = G$. Thus $\dim V_\soc[\vv+z'] = \frac{|S_\pi|}{|G|}$ 
and the FO inequality is sharp for all parabolic subgroups of $S_\pi$.
\end{Corollary}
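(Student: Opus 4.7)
The plan is to produce $z'$ inside the top-dimensional cone $\P(\Omega^+(\vv)) \subset \ess(\vv)$ constructed in the proof of Theorem \ref{T:essupp}, tuning it so that its stabilizer matches the prescribed parabolic $G$.

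First I would observe that $(S_\pi)_z$ is automatically a standard parabolic subgroup of $S_\pi$. Since $z \in \DD$ is nonincreasing along each interval of $\II(\vv)$, the only simple reflections of $S_\pi$ that can fix $z$ are those transpositions $(i\ i+1)_{(k)}$ for which $(k,i),(k,i+1)$ lie in a common interval of $\II(\vv)$ with $z_{k,i}=z_{k,i+1}$, and these generate a standard parabolic subgroup. Hence without loss of generality $G$ is standard parabolic in $S_\pi$, and its orbits on $\Sigma'$ refine the interval partition $\II(\vv)$ into a finer interval partition.

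Next I would construct $z' \in \P(\Omega^+(\vv))$ with $(S_\pi)_{z'}=G$, following the recipe sketched in the paragraph immediately preceding the statement. The proof of Theorem \ref{T:essupp} realizes $\P(\Omega^+(\vv))$ as the integer points of the rational cone cut out by the linear conditions (a)--(e) associated to $\overrightarrow\Omega(\vv)$; in particular (e) reads $z'_{k,i}\geq z'_{k,i+1}$ on adjacent vertices within a common $\II(\vv)$-interval. I would choose $z'$ so that (e) holds with equality exactly on the pairs $(k,i),(k,i+1)$ lying in the same $G$-orbit, and with strict inequality on pairs lying in distinct $G$-orbits of the same interval. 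Concretely, one may take $z'$ to be a suitable positive integer combination of the cone generators $c^{k,i}$ introduced in the proof of Theorem \ref{T:essupp}, indexed over one representative $(k,i)$ per $G$-orbit and scaled large enough to separate orbits; this keeps $z'$ in the cone and so (a)--(d) are satisfied, while (e) realizes the prescribed stabilizer pattern, giving $(S_\pi)_{z'}=G$.

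Finally, because $0 \in \ess(\vv)$ (the weight $\vv$ has multiplicity $1=|S_\pi|/|S_\pi|$, its component being spanned by $D_e(\vv)$) and Lemma \ref{L:omega-contained} combined with the equality $\P(\Omega^+(\vv))=\overline\P(\Omega^+(\vv))$ yield $\P(\Omega^+(\vv)) \subset \ess(\vv)$, we conclude $z' \in \ess(\vv)$. The claimed identity $\dim V_\soc[\vv+z'] = |S_\pi|/|G|$ is then immediate from the definition of the essential support, and the sharpness of the FO inequality across all parabolic subgroups follows. The only delicate point I anticipate is the explicit tuning in the previous paragraph: one must verify that the equality/strict-inequality pattern on (e) dictated by $G$ remains simultaneously compatible with (a)--(d) over the integers; this is precisely what the monoid description of $\P(\Omega^+(\vv))$ via the generators $c^{k,i}$ is designed to make transparent, but it is the only place where the combinatorics of $\overrightarrow\Omega(\vv)$ must be handled with care.
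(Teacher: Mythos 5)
Your proof follows the same route as the paper: reduce to a standard parabolic $G$, then pick $z' \in \P(\Omega^+(\vv))$ whose equality pattern in condition (e) exactly tracks the orbits of $G$, and finally use the fact that $\P(\Omega^+(\vv)) \subset \ess(\vv)$ (which follows from $0 \in \ess(\vv)$ together with Lemma \ref{L:omega-contained}). That is precisely the argument sketched in the paragraph preceding the corollary, so the approach agrees.

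One small imprecision in the concrete construction: when you take $z'$ as a positive combination of generators $c^{k,i}$, "one representative per $G$-orbit" must mean the \emph{leftmost} element $(k,a(J))$ of each $G$-orbit $J$; an arbitrary representative $(k,i)$ with $i>a(J)$ would contribute a $(0,\dots,0,-1,\dots,-1)$ profile breaking in the middle of $J$ and destroy the required equality $z'_{k,i-1}=z'_{k,i}$. With that choice the construction does work: for a seed $\vv$, within any connected component of $\Omega(\vv)$ the entries in rows $<n$ are all equal, so from a vertex $[l,i]$ the directed reachable set in any lower row $k$ is the \emph{entire} $\II(\vv)$-interval of that component, and hence contributions from generators in higher rows are constant on each interval and never split a $G$-orbit. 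Your flagging of this as the delicate point is accurate, and the resolution is exactly the "leftmost" convention; the paper itself leaves this as a one-line sketch, so your level of detail is appropriate.
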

In the next section we will show that in the case were $\vv = \mathbf 0$ this 
bound can be obtained when $M$ a simple Verma module. 

We conclude this section with the observation that Corollary 
\ref{cor-fo-conj} implies that if the essential support of a simple 
Gelfand-Tsetlin module intersects $\ess(\vv)$, then it must be isomorphic to
$\soc~V(T(\vv))$.
\begin{Corollary}
If $z \in \ess(\vv)$ then the module $V_{\rm soc} = U D_e(\vv)$ is, up to
isomorphism, the unique simple Gelfand-Tsetlin module having $\vv +z$ 
in its support.
\end{Corollary}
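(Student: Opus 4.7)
The plan is to reduce to the immediately preceding observation---that any simple Gelfand-Tsetlin module whose essential support meets $\ess(\vv)$ is isomorphic to $V_{\rm soc}$---by upgrading the hypothesis $\vv+z\in\supp M$ to $\vv+z\in\essupp M$ whenever $z\in\ess(\vv)$. The first step is to collect the consequences of $z\in\ess(\vv)$: since $\dim V_{\rm soc}[\vv+z]=|S_\pi^z|=\dim V(T(\vv))[\vv+z]$, the inclusion of weight components $V_{\rm soc}[\vv+z]\subset V(T(\vv))[\vv+z]$ is an equality. In particular $D_e(\vv+z)\in V_{\rm soc}$, and simplicity gives $V_{\rm soc}=U\cdot D_e(\vv+z)$. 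Moreover, by Proposition~\ref{P:gt-weight-spaces}(\ref{i:eigenvalue}), the line $\CC\, D_e(\vv+z)$ is the only $\m$-annihilated subspace of $V_{\rm soc}[\vv+z]$, where $\m=\ker\chi_{\vv+z}$.

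Let now $M$ be a simple Gelfand-Tsetlin module with $M[\vv+z]\neq 0$, and pick a nonzero $\Gamma$-eigenvector $m$ in the $\Gamma$-socle of $M[\vv+z]$ (which exists because $\m$ acts locally nilpotently on the finite-dimensional $\Gamma$-module $M[\vv+z]$). Since $M$ is simple, $M=Um$. The crux is to construct a $U$-module surjection $V_{\rm soc}\to M$ extending the assignment $D_e(\vv+z)\mapsto m$; simplicity of both sides will then force this map to be an isomorphism, and in particular $\dim M[\vv+z]=|S_\pi^z|$, so $\vv+z\in\essupp M$, completing the reduction. Well-definedness of the map reduces to the annihilator inclusion $\operatorname{Ann}_U(D_e(\vv+z))\subset\operatorname{Ann}_U(m)$, where the first annihilator is computed inside $V_{\rm soc}$ (equivalently inside $V(T(\vv))$, since $V_{\rm soc}$ is a submodule).

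The main obstacle is verifying this annihilator inclusion, because a priori the action of $U$ on $m\in M$ is not governed by the Postnikov-Stanley formulas of Theorem~\ref{T:gt-big-module}. My approach is an induction on Bruhat length using the explicit expressions in Remark~\ref{R:formulas}: given $u\in U$ with $u\cdot D_e(\vv+z)=0$ in $V(T(\vv))$, the decomposition into translated Gelfand-Tsetlin components $V(T(\vv))[\vv+z+y]$ shows that the leading derivative tableaux $D_\sigma(\vv+z+y)$ have vanishing coefficients, while the corrections $D_{<\sigma}$ of strictly smaller Bruhat length must themselves collapse. Using the uniqueness of $\m_y$-annihilated eigenvectors in each translated weight component (Proposition~\ref{P:gt-weight-spaces}(\ref{i:eigenvalue})), applied both in $V(T(\vv))$ and in $M$, these coefficient identities transfer to analogous identities for the iterated images of $m$ inside $M$; the cyclic generation statement in Proposition~\ref{P:gt-weight-spaces}(\ref{i:generator}) then closes the induction, yielding $u\cdot m=0$ and hence the desired annihilator inclusion.
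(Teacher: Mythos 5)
Your setup is fine through the observation that $D_e(\vv+z)\in V_\soc$ and hence $V_\soc = U D_e(\vv+z)$, but the plan stalls at the very step you flag as the crux. To get a well-defined $U$-map $V_\soc \to M$ sending $D_e(\vv+z)\mapsto m$ you need $\operatorname{Ann}_U(D_e(\vv+z)) \subset \operatorname{Ann}_U(m)$; but this inclusion is the theorem in disguise (both are maximal left ideals of $U$ containing $U\m$ for $\m=\ker\chi_{\vv+z}$, so the inclusion is automatically an equality and immediately yields $M\cong V_\soc$), and the proposed Bruhat induction does not establish it. The tool you invoke --- Proposition \ref{P:gt-weight-spaces}, parts (\ref{i:eigenvalue}) and (\ref{i:generator}) --- is a statement about $V(T(\vv))$ only; it describes the $\Gamma$-module structure of the weight components of $V(T(\vv))$ in terms of the derivative-tableau basis. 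No analogous statement is available for an arbitrary simple Gelfand-Tsetlin module $M$: all you have is that $\dim M[\vv+z+y]$ is bounded above by the FO quantity, not that $M[\vv+z+y]$ is a cyclic $\Gamma$-module with a single line of $\Gamma$-eigenvectors, nor that $E_{k,k\pm 1}$ acts on $m$ with the triangular structure of Remark \ref{R:formulas}. The sentence ``these coefficient identities transfer to analogous identities for the iterated images of $m$ inside $M$'' is precisely the missing content, and I do not see how to supply it without already knowing $M\cong V_\soc$.

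The paper sidesteps this by going up rather than across. Take $\m=\ker\chi_{\vv+z}$ and $N = U/U\m$. Every simple Gelfand-Tsetlin module with $\vv+z$ in its support --- $V_\soc$ included --- is a quotient of $N$, via sending $\bar 1$ to a nonzero $\Gamma$-eigenvector of Gelfand-Tsetlin weight $\vv+z$. The FO inequality gives $\dim N[\vv+z]\leq |S_\pi|/|(S_\pi)_z|$, while $\dim V_\soc[\vv+z]$ attains this bound because $z\in\ess(\vv)$; so the kernel $K$ of $N\twoheadrightarrow V_\soc$ satisfies $K[\vv+z]=0$. If $L$ were a different maximal submodule of $N$, then $K+L=N$ would force $L[\vv+z]=N[\vv+z]\ni\bar 1$, contradicting $\bar 1\notin L$; hence $K$ is the unique maximal submodule and $V_\soc$ is the unique simple quotient, with no need to control the $U$-action on $M$ at all. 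As a secondary remark, your opening reduction to the ``preceding observation'' is circular: that observation is not established independently in the paper, it is this very corollary stated informally.
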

\begin{proof}
Let $\m$ be the maximal ideal of $\Gamma$ corresponding to $\vv$ and let $N = 
U/U\m$. Then being a simple module, $V_\soc$ appears as a quotient of $N$. 
By \cite{FO14}*{Theorem 4.12(c)}, $\dim N[\vv +z] \leq \frac{|S_\pi|}
{|(S_{\pi})_z)|}$, while $\dim V_\soc[\vv+z] = \frac{|S_\pi|}{|(S_{\pi})_z)|}$ 
for all $z \in \ess(\vv)$. This implies the corollary.
\end{proof}

\section{Realization of Verma modules in universal tableaux modules}
Let $\lambda \in \CC^n$ be a (Cartan) weight of $\gl(n,\CC)$. Recall that we 
can associate to $\lambda$ a Verma module $M(\lambda)$ by extending $\lambda$ 
to a character of the Borel subalgebra of upper-triangular matrices and then 
inducing the resulting module to $\gl(n,\CC)$. The module $M(\lambda)$ 
is a highest weight module with highest weight $\lambda$ and any other such 
module is a quotient of $M(\lambda)$.

\paragraph
\about{Restriction from $\gl(n,\CC)$ to $\sl(n,\CC)$}
We fix $\h' \subset \sl(n,\CC)$ to be the usual Cartan subalgebra of diagonal
matrices of trace $0$.

Given a representation $V$ of $\gl(n,\CC)$ we denote by $V'$ its restriction 
to $\sl(n,\CC)$, and given $v \in V$ we denote by $v'$ the corresponding vector
in $V'$. Then $v$ is a Cartan weight vector if and only if $v'$ is a 
Cartan weight vector, and if $\lambda = (\lambda_1, \ldots, \lambda_n)$ is the 
weight of $v$ then the weight of $v'$ is $\lambda' = (\lambda_1 - \lambda_2, 
\ldots, \lambda_{n-1} - \lambda_n)$. 
Verma modules restrict to Verma modules, and $M(\lambda)' = M(\lambda')$. In
particular this implies that $M(\lambda)$ is a free $U^-$-module, where
$U^- \subset U$ is the subalgebra generated by the elements $E_{j,i}$ with
$1 \leq i < j \leq n$. We will say that $\lambda$ is dominant, resp. integral, 
resp. dominant integral, if the corresponding $\sl(n,\CC)$-weight is dominant, 
resp. integral, resp. dominant integral.

The natural action of $S_n$ on $\CC^n$ induces an action of $S_n$ on $\CC^n/L$,
where $L$ is the vector space generated by the vector $(1,1, \ldots, 1)$. If
we identify $\CC^n/L$ with the dual of the Cartan subalgebra of $\sl(n,\CC)$
and the corresponding Weyl group with $S_n$, then the induced action of $S_n$ 
and the action of the Weyl group coincide. In other words, for each $\sigma 
\in S_n$ and each $\gl(n,\CC)$-weight $\lambda$ we have $\sigma(\lambda)' = 
\sigma(\lambda')$.

Let $\tilde \rho = -(0,1,\ldots, n-1)$. Then $\tilde \rho'$ is the half-sum of 
the positive roots of $\sl(n,\CC)$. The dot action of $S_n$ on $\CC^n$ is 
given by $\sigma \cdot \lambda = \sigma(\lambda + \tilde \rho) - \tilde \rho$ 
for all $\sigma \in S_n$. This dot action induces the dot action of $S_n$ on 
$\sl(n,\CC)$-weights, i.e. $(\sigma\cdot\lambda)' = \sigma \cdot \lambda'$.

\paragraph
\about{Highest weight vectors of universal tableaux modules}
For each $\lambda \in \CC^n$ we set $\tilde \lambda = \lambda + \tilde \rho$.

Let $\lambda \in \CC^n$ be such that $\tilde \lambda$ is a dominant 
weight, and let $\sigma \in S_n$. We denote by $HW(\lambda, \sigma)$ an 
element of $\CC^\mu$ in normal form whose $k$-th row is a permutation of 
$(\tilde \lambda_{\sigma^{-1}(1)}, \tilde \lambda_{\sigma^{-1}(2)}, \ldots, 
\tilde \lambda_{\sigma^{-1}(k)})$ for each $k$; this element may not be 
unique, so we fix one for each choice of $\lambda$ and $\sigma$. If $\tilde 
\lambda$ is dominant integral then the top row of $HW(\lambda,\sigma)$ is 
equal to $\tilde \lambda$. 

Let $\tau \in S_n$. If $\sigma(\lambda)-\tau(\lambda) \in \ZZ^n$ then 
we can and will assume that $HW(\lambda,\sigma)$ and $HW(\lambda,\tau)$ lie in 
the same $(\ZZ^\mu_0 \# S_\mu)$-orbit. 

\begin{Lemma}
\label{L:weight-tableau}
Let $\vv$ be a seed in the $(\ZZ^\mu_0\# S_\mu)$-orbit of $HW(\lambda,
\sigma)$. Then the classical tableau $D_e(HW(\lambda, \sigma)) \in V(T(\vv))$ 
is a highest weight vector of $V(T(\vv))$ of weight $\sigma \cdot \lambda = 
\sigma(\lambda + \tilde \rho) - \tilde \rho$.
\end{Lemma}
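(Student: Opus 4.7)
The plan is to check the two defining properties of a highest weight vector: that $D_e(HW(\lambda,\sigma))$ has Cartan weight $\sigma \cdot \lambda$, and that it is annihilated by every raising operator $E_{k,k+1}$. Write $\vv + z = HW(\lambda,\sigma)$ with $z \in \DD$. For the weight, Theorem~\ref{T:gt-big-module} gives $E_{k,k} D_e(\vv+z) = h_k(\vv+z) D_e(\vv+z)$. Since the $k$-th row of $HW(\lambda,\sigma)$ is a permutation of $(\tilde\lambda_{\sigma^{-1}(1)}, \ldots, \tilde\lambda_{\sigma^{-1}(k)})$, its sum is $\sum_{i \leq k} \tilde\lambda_{\sigma^{-1}(i)}$, and using $h_k = \sum_i x_{k,i} - \sum_i x_{k-1,i} + (k-1)$ together with $\tilde\lambda_i = \lambda_i - (i-1)$ one obtains $h_k(HW(\lambda,\sigma)) = \tilde\lambda_{\sigma^{-1}(k)} + (k-1) = \lambda_{\sigma^{-1}(k)} - \sigma^{-1}(k) + k = (\sigma \cdot \lambda)_k$, as required.

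For the annihilation, the plan is to expand
\begin{align*}
E_{k,k+1} D_e(\vv+z) = -\sum_{I \in \II(\vv,z)[k]} \sum_{\tau \leq \alpha(I)} \D^{\vv+z}_{\tau,\alpha(I)}(e_I) \, D_\tau(\vv + z + \delta^{k,a(I)})
\end{align*}
via Theorem~\ref{T:gt-big-module}, and to show that every coefficient $\D^{\vv+z}_{\tau,\alpha(I)}(e_I)$ vanishes. The key observation is the interlacing property built into $HW(\lambda,\sigma)$: the multiset of entries in row $k$ is a submultiset of that in row $k+1$, the two differing only by the extra element $\tilde\lambda_{\sigma^{-1}(k+1)}$. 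Fix $I = \interval{a,b}_k \in \II(\vv,z)[k]$ and let $\tilde\lambda_p$ denote the common value of the entries of $\vv+z$ along $I$; by interlacing, $\tilde\lambda_p$ occurs at least $|I| = b - a + 1$ times in row $k+1$. Viewed as a function of $x_{k,a}$ with the remaining variables frozen at $\vv + z$, the denominator of $e_I$ is nonzero at $x_{k,a} = \tilde\lambda_p$ (by the very definition of the block $I$), while the numerator $\prod_{j=1}^{k+1}(x_{k,a} - x_{k+1,j})$ vanishes there to order at least $|I|$.

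It remains to deduce that not only $e_I(\vv+z)$ but also every Postnikov--Stanley derivative $\D^{\vv+z}_{\tau,\alpha(I)}(e_I)$ with $\tau \leq \alpha(I)$ is zero. This is the main technical obstacle: although $e_I$ vanishes at $\vv+z$, operators of lower Bruhat index can extract higher derivatives and need not be killed by a simple zero. Here the plan is to use that $\alpha(I)$ has length $|I| - 1$ and that its reduced decompositions only involve the adjacent transpositions $(j \ j+1)_{(k)}$ for $a \leq j < b$, so $\D^{\vv+z}_{\tau,\alpha(I)}$ is a linear combination of iterated divided differences of total degree at most $|I|-1$ in the variables $x_{k,a}, \ldots, x_{k,b}$. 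Applied to the one-variable polynomial $f(x_{k,a})$ obtained by restricting $e_I$, the longest iterated divided difference $\partial_a \partial_{a+1} \cdots \partial_{b-1}$ equals the Newton divided difference $\sum_{j=a}^{b} f(x_{k,j})/\prod_{\ell \neq j}(x_{k,j} - x_{k,\ell})$, which at the diagonal $x_{k,a} = \cdots = x_{k,b} = \tilde\lambda_p$ computes $f^{(|I|-1)}(\tilde\lambda_p)/(|I|-1)!$ and hence vanishes; any shorter iterated divided difference leaves at least one uncanceled factor $(x - \tilde\lambda_p)$ and also vanishes on the diagonal. This precise match between the order of vanishing of $e_I$ at $\vv+z$ (namely $|I|$) and the maximum degree of the operators that appear (namely $\ell(\alpha(I)) = |I|-1$) is what forces the entire sum to vanish, completing the proof.
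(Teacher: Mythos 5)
Your proof is correct and rests on exactly the same observation as the paper's: each $e_I$ vanishes to order at least $|I|$ at $HW(\lambda,\sigma)$ (because row $k$ is a submultiset of row $k+1$, so $\tilde\lambda_p$ appears $\geq |I|$ times in row $k+1$), while the Postnikov--Stanley operators $\D_{\tau,\alpha(I)}$ have order $\ell(\alpha(I)) - \ell(\tau) \leq |I|-1 < |I|$. Your last paragraph reproving the vanishing via the Newton divided-difference formula is unnecessary work: Remark~\ref{R:formulas} already records that $\D_{\tau,\sigma}$ is a differential operator of degree $\ell(\sigma)-\ell(\tau)$, and any differential operator of order strictly less than the order of vanishing of a function at a point kills that function there, which is the one-line finish the paper uses. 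Your explicit check that $h_k(HW(\lambda,\sigma)) = (\sigma\cdot\lambda)_k$ is a welcome expansion of the paper's terse ``follows from the definition.''
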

\begin{proof}
Put $v = HW(\lambda, \sigma)$ and let $k \in \interval{n}$. The fact that 
$D_e(v)$ has the desired weight follows from the 
definition of $v$ and the formula for the action of $E_{k,k}$. It remains to 
prove that $D_e(v)$ is a highest weight vector. 

Theorem \ref{T:gt-big-module} implies that $E_{k,k+1} D_e(v)$ is a linear 
combination of derivative tableaux $D_\tau(v + \delta^{k,i})$ where $(k,i)$ is 
the first entry in an interval $I = \{(k,i), (k,i+1), \ldots, (k,j)\}$ and 
$\tau < \alpha(I)$. Furthermore, the coefficient of $D_\tau(v + \delta^{k,i})$ 
is $\D_{\tau,\alpha(I)}(e_I)(v)$. To prove $E_{k,k+1} D_e(v) =0$ it is enough 
to show that $\D_{\tau,\alpha(I)}(e_I)(v) = 0$ for all such $\tau$.

By definition, the interval $I$ is such that $v_{k,i} = v_{k,i+1} = \ldots =
v_{k,j}$, and since the $k$-th row of $v$ is obtained by deleting one element 
from its $k+1$-th row, there are at least $|I|$ entries in the $k+1$-th row of 
$v$ equal to $v_{k,i}$. Thus $e_I$ is a rational function with a zero of order
at least $|I|$ in $v$. On the other hand, by definition $\D_{\tau,\alpha(I)}$ 
is a differential operator of order $\ell(\alpha(I)) - \ell(\tau) < |I|$, and 
hence $\D_{\tau,\alpha(I)}(e_I)(v) = 0$. 
\end{proof}

We fix a seed $\vv$ in the $(\ZZ^\mu_0\# S_\mu)$-orbit of $HW(\lambda,\sigma)$ 
(again, the top row of $\vv$ equals $\tilde \lambda$ if this is dominant 
integral). The lemma and the universal property of Verma modules together 
imply that there is a map $j_{\lambda,\sigma}: M(\sigma\cdot\lambda)\to 
V(T(\vv))$ mapping a highest weight vector $v_{\sigma\cdot\lambda}$ to 
$D_e(HW(\lambda, \sigma))$ for each $\sigma \in S_n$. As the following theorem 
shows, this map is injective and, therefore, any Verma module can be realized 
as a submodule of an adequate universal tableaux module.

\begin{Theorem}
\label{T:verma-injection}
For each dominant weight $\lambda$ and each $\sigma \in S_n$ the map 
$j_{\lambda, \sigma}$ is injective. In particular every Verma module 
can be realized as a submodule of a universal tableaux module. 
\end{Theorem}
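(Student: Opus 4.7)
Write $v := HW(\lambda,\sigma) = \vv + z_0$ for some $z_0 \in \DD(\vv)$, so that $j_{\lambda,\sigma}$ sends the highest weight vector of $M(\sigma\cdot\lambda)$ to $D_e(\vv + z_0)$. The image $N := j_{\lambda,\sigma}(M(\sigma\cdot\lambda)) = U \cdot D_e(\vv + z_0)$ is a cyclic $U$-module generated by a Cartan weight vector of weight $\sigma\cdot\lambda$ annihilated by all $E_{i,i+1}$ (by Lemma~\ref{L:weight-tableau}), hence a quotient of the Verma module $M(\sigma\cdot\lambda)$. Consequently the $\h$-character of $N$ is pointwise bounded above by that of $M(\sigma\cdot\lambda)$, and injectivity of $j_{\lambda,\sigma}$ is equivalent to the reverse inequality: for each weight $\mu = \sigma\cdot\lambda - \nu$ we must verify $\dim N_\mu \geq K(\nu)$, where $K$ denotes the Kostant partition function.

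The plan is to produce $K(\nu)$ linearly independent vectors in $N_\mu$ by applying the $K(\nu)$ PBW monomials $F^{\mathbf m} := \prod_{i<j} E_{j,i}^{m_{ij}}$ of total weight $-\nu$ to the cyclic generator $D_e(\vv + z_0)$. By Theorem~\ref{T:gt-big-module} and Remark~\ref{R:formulas}, each application of $E_{k+1,k}$ to a derivative tableau $D_\tau(\vv + z)$ contributes, for each interval $I \in \II(\vv, z)[k]$, a leading term $f_I(\vv + z) D_{\tau\beta(I)}(\vv + z - \delta^{k,b(I)})$ plus Bruhat-lower terms. Iterating and ordering the factors so that the leading coefficients at each step do not all vanish, one expects distinct PBW monomials to produce vectors supported on distinct Gelfand-Tsetlin components of $V(T(\vv))$ — or, within the same component, distinguished by their leading shuffle in the Bruhat order — yielding the required linear independence.

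The main obstacle is the combinatorial bookkeeping of the leading terms through iterated compositions. The coefficient $f_I(\vv + z)$ vanishes precisely when the edge $[k, b(I)] - [k+1, j]$ in $\Omega(\vv + z)$ has matching entries, so the order of factors in each $F^{\mathbf m}$ must be chosen to avoid simultaneous vanishing of all interval contributions at any intermediate step. One must then verify that the trajectories in $\DD(\vv)$ traced out by distinct monomials are genuinely separated. An alternative and potentially cleaner route is a deformation argument: for generic dominant $\lambda$ the Verma $M(\sigma\cdot\lambda)$ is simple and $j_{\lambda,\sigma}$ is injective automatically, and a flat family together with upper semicontinuity of kernel dimension should extend the result to arbitrary dominant $\lambda$, provided the rational Postnikov--Stanley coefficients in the action formulas can be controlled across specializations.
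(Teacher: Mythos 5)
Your proposal takes a fundamentally different route from the paper, and as written both of your suggested strategies contain genuine gaps.

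\textbf{The leading-term/PBW route.} The reduction of injectivity to a character equality $\dim N_\mu \geq K(\nu)$ is correct, but the plan to certify this by applying PBW monomials $F^{\mathbf m}$ and tracking leading derivative tableaux is not carried out, and the hard parts are precisely the ones you flag as ``expected.'' First, in the singular case many distinct PBW monomials of weight $-\nu$ will land in the \emph{same} Gelfand--Tsetlin component, so separation by GT weight fails in general. Second, to distinguish vectors within a single component by their leading shuffle you would need to show that for each monomial one can choose an order of factors so that the product of the relevant $f_I$-coefficients is nonzero, and that the resulting leading shuffles are pairwise distinct; neither is established, and the vanishing of $f_I(\vv+z)$ is not a minor nuisance but the central combinatorial difficulty of singular GT theory. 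This is essentially asking to re-derive, by hand, a character identity that the paper deliberately sidesteps.

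\textbf{The deformation route.} This has the direction of semicontinuity backwards. For a family of linear maps $j_\lambda$ with fixed source and target in each Cartan weight space, $\dim\ker j_\lambda$ is \emph{upper} semicontinuous: it can only jump \emph{up} at special values of $\lambda$. Therefore injectivity at generic $\lambda$ gives no information at singular $\lambda$. In addition, $V(T(\vv))$ is not the fiber of an obvious flat family over the weight lattice --- its basis of derivative tableaux and the Postnikov--Stanley coefficients change discontinuously with the singularity type of $\vv$ --- so even the ``flat family'' premise would require substantial work to set up.

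\textbf{What the paper does instead.} The paper avoids all character counting. By BGG theory, $M(\sigma\cdot\lambda)$ has simple socle $M(\nu)$ with $\nu$ antidominant, and by Lemma~\ref{L:weight-tableau} the classical tableau $D_e(HW(\lambda,\nu))$ is a highest weight vector of weight $\nu$ in $V(T(\vv))$; since $M(\nu)$ is simple and the socle of $V(T(\vv))$ is simple (Proposition~\ref{P:cyclic}), $U D_e(HW(\lambda,\nu))$ \emph{is} that socle. Proposition~\ref{P:cyclic}\,(\ref{i:minimal-module}) then forces $\soc V(T(\vv)) \subset U D_e(HW(\lambda,\sigma)) = \im j_{\lambda,\sigma}$, so the image is a quotient of $M(\sigma\cdot\lambda)$ that still contains the simple socle $L(\nu)$; since $[M(\sigma\cdot\lambda):L(\nu)] = 1$ (\cite{Humphreys08}*{Theorem 7.16}), the quotient must be $M(\sigma\cdot\lambda)$ itself, i.e.\ $j_{\lambda,\sigma}$ is injective. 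The structural input (simple socle of the universal tableaux module, which you did not invoke) does all the work that your character-matching scheme would otherwise require.
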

\begin{proof}
By the classical Verma and BGG theorems, the Verma module $M(\sigma \cdot 
\lambda)'$ has a simple socle, which is isomorphic to $M(\nu')$, where $\nu$ 
is the unique element in the orbit of $S_n \cdot \lambda$ such that $\nu'$ is 
both linked to $\sigma \cdot \lambda'$ and antidominant (see for example 
\cite{Humphreys08}*{Chapter 4}). These conditions imply that $v = HW(\lambda,
\sigma)$ and $w = HW(\lambda, \nu)$ lie in the same $\ZZ^\mu_0 \# S_\mu$-orbit.
In particular, they are both highest weight vectors in $V(T(\vv))$ by Lemma
\ref{L:weight-tableau}.

Since $M(\nu)$ is simple and $j_{\lambda,\nu}$ is nonzero, it defines an
isomorphism $M(\nu) \cong U D_e(w)$, and since the socle of $V(T(\vv))$ is 
simple, it must be equal to $U D_e(w)$. By Proposition \ref{P:cyclic} 
(\ref{i:minimal-module}) the socle of $V(T(\vv))$ is contained in $U D_e(v)$, 
so the image of $j_{\lambda, \sigma}$ is a quotient of $M(\sigma\cdot\lambda)$ 
containing a highest weight vector of weight $\nu \cdot \lambda$. By 
\cite{Humphreys08}*{Theorem 7.16} this is possible only if the map 
$j_{\lambda,\sigma}$ is injective.
\end{proof}

\paragraph
\about{The support of the Verma module $M(-\tilde\rho)$}
It follows from Lemma \ref{L:weight-tableau} that the tableau $D_e(\mathbf 0)$ 
is a highest weight vector of weight $-\tilde\rho$ in $V(T(\mathbf 0))$. Since 
the Verma module $M(-\tilde\rho)$ is simple, by Theorem \ref{T:verma-injection}
it is isomorphic to the socle of the corresponding universal tableaux module. 
This allows us to study both its Gelfand-Tsetlin support and its essential 
support. We begin our study of the support of $M(-\tilde\rho)$ with the 
following lemma.

\begin{Lemma}
\label{L:zero-nonzero}
Let $z \in \DD(\mathbf 0)$, let $I=\interval{a,b}_k\in \II(\mathbf 0,z)$, let 
$m = z_{k,a} = z_{k,a+1} = \cdots = z_{k,b}$, and put $z'=z - \delta^{k,b}$. 
Also, let $J$ be the (possibly empty) interval formed by the $(k-1,j) \in 
\Sigma$ such that $z_{k-1,j} = m$. Then $p_{z'}(E_{k+1,k} D_e(z))$ is nonzero 
if and only if $|J| \leq |I|$.
\end{Lemma}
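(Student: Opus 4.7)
The plan is to expand $E_{k+1,k}D_e(z)$ via Theorem~\ref{T:gt-big-module} and track which summands survive the projection $p_{z'}$. Writing out the formula,
\[
E_{k+1,k}D_e(z) = \sum_{I' \in \II(\mathbf{0},z)[k]}\, \sum_{\tau \leq \beta(I')} \D_{\tau,\beta(I')}^{z}(f_{I'})\, D_\tau(z - \delta^{k,b(I')}),
\]
I would first argue that only the term $I' = I$ contributes to $p_{z'}$: for any $I' \neq I$, write $m' = z_{k,b(I')}$, so $m' \neq m$. The row-$k$ multisets of $z - \delta^{k,b(I')}$ and of $z' = z - \delta^{k,b}$ are obtained from that of $z$ by replacing one copy of $m'$ by $m'-1$ and one copy of $m$ by $m-1$ respectively, hence they are distinct, so the two points lie in distinct $S_\mu$-orbits and define distinct Gelfand-Tsetlin characters. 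Thus
\[
p_{z'}(E_{k+1,k}D_e(z)) = \sum_{\substack{\tau \leq \beta(I) \\ \tau \in S_\pi^{z'}}} \D_{\tau,\beta(I)}^{z}(f_I)\, D_\tau(z'),
\]
and by linear independence of the basis $\{D_\tau(z') \mid \tau \in S_\pi^{z'}\}$ this vanishes if and only if every coefficient $\D_{\tau,\beta(I)}^{z}(f_I)$ vanishes.

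Next I would analyze the vanishing of $f_I$ at $z$. The denominator $\prod_{(k,j)\notin I}(x_{k,b}-x_{k,j})$ is nonzero at $z$ because $I$ is a maximal block of $\II(\mathbf{0},z)[k]$, so $z_{k,j} \neq m$ for $(k,j) \notin I$. In the numerator $\prod_{j=1}^{k-1}(x_{k,b}-x_{k-1,j})$, a factor vanishes at $z$ precisely when $z_{k-1,j} = m$, that is, when $(k-1,j) \in J$. Hence $f_I = N_J \cdot g$, where $N_J = \prod_{(k-1,j) \in J}(x_{k,b}-x_{k-1,j})$ is a product of $|J|$ linear factors vanishing at $z$ and $g$ is regular and nonvanishing at $z$.

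The appearance of $|I|$ then comes from the degree of the Postnikov-Stanley operator: $\D_{\tau,\beta(I)}$ has degree $\ell(\beta(I)) - \ell(\tau) = (|I|-1) - \ell(\tau)$, so a Leibniz expansion shows $\D_{\tau,\beta(I)}^{z}(f_I) = 0$ whenever too few derivatives are available to eliminate all $|J|$ vanishing factors of $N_J$. This will dispose of one direction of the equivalence. For the converse I would take $\tau = e$, maximising the operator degree at $|I|-1$, and use the key structural observation that every factor of $N_J$ involves the single variable $x_{k,b}$: iterating $\partial/\partial x_{k,b}$ reduces $N_J$ to $|J|!$ times products of nonvanishing residues, and multiplying by $g(z) \neq 0$ produces a nonzero leading contribution.

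The main obstacle is the fine matching with the stated bound and the verification that the specific Postnikov-Stanley operator from \cite{FGRZ18}*{Definition 3.1} actually realises this leading contribution rather than cancelling it against derivatives hitting $g$ or the denominator. Resolving this requires explicit use of the reduced-word expansion of $\D_{e,\beta(I)}$ and careful bookkeeping of the signs and multiplicities in the Postnikov-Stanley formula, which is the technical heart of the argument.
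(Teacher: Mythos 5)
Your reduction to the single interval $I'=I$, your computation that $f_I$ vanishes at $z$ to order exactly $|J|$, and your factorization $f_I=N_J\cdot g$ with $g$ regular and nonzero at $z$ are all correct and match the paper's argument. Your degree count $\ell(\beta(I))=|I|-1$ is also correct (the paper's own proof contains the slip ``$\ell(\beta)=|I|$''), and it correctly shows that the threshold for nonvanishing is $|J|<|I|$ rather than the $|J|\leq|I|$ printed in the statement; the corrected inequality is the one the later applications actually rely on (in particular the ``projection is zero'' conclusion at $|J|=|I|$ in the proof of Theorem~\ref{thm-supp Verma}).

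The genuine gap is in the converse, where you take $\tau=e$. Since $f_I$ depends on $x_{k,b}$ but on none of $x_{k,a},\dots,x_{k,b-1}$, and $\D_{\tau,\beta(I)}$ is a polynomial in $\partial_{k,a},\dots,\partial_{k,b}$, the operator acts on $f_I$ as its coefficient of $\partial_{k,b}^{d}$ times $\partial_{k,b}^{d}$, where $d=\ell(\beta(I))-\ell(\tau)$. With $\tau=e$ one has $d=|I|-1$; since $N_J$ is a degree-$|J|$ polynomial in $x_{k,b}$, the Leibniz sum collapses to the single term $\binom{|I|-1}{|J|}|J|!\,\bigl(\partial_{k,b}^{\,|I|-1-|J|}g\bigr)(z)$, so you would have to show \emph{both} that this higher derivative of $g$ is nonzero (it need not be) \emph{and} that the coefficient of $\partial_{k,b}^{|I|-1}$ in $\D_{e,\beta(I)}$ is nonzero. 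Neither is addressed by ``explicit use of the reduced-word expansion.'' The paper sidesteps both issues by choosing $\sigma=(a+|J|\ a+|J|+1\ \cdots\ b)_{(k)}$, so that $\ell(\beta(I))-\ell(\sigma)=|J|$ exactly (here $|J|<|I|$ is used for $\sigma$ to make sense). Then $\D_{\sigma,\beta(I)}$ has degree $|J|$, no derivative lands on $g$, the coefficient of $\partial_{k,b}^{|J|}$ is read off from the Postnikov--Stanley formula as $(-1)^{|J|}/|J|!$, and one gets $\D_{\sigma,\beta(I)}(f_I)(z)=(-1)^{|J|}g(z)\neq 0$. This exact degree matching, not the choice $\tau=e$, is the missing key idea.
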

\begin{proof}
Recall that $\beta = \beta(I)$ is the permutation $(a \ a+1 \ \cdots
b)_{(k)}$. The formulas for the action of $U$ on $V(T(\mathbf 0))$ given in 
Theorem \ref{T:gt-big-module} tell us that 
\begin{align*}
p_{z'}(E_{k+1,k} D_e(z)) 
	&= \sum_{\tau \leq \beta} \D_{\tau,\beta}(f_I)(z) D_\tau(z').
\end{align*}
The differential operator $\D_{\tau,\beta}$ has order $\ell(\beta) - \ell(\tau)
\leq \ell(\beta) = |I|$, and by definition $f_I$ has a zero of order $|J|$ at 
$z$. Thus $\D_{\tau,\beta(I)}(f_I)(z) = 0$ if $|J| > |I|$.

Now suppose $|J| \leq |I|$ and let $\sigma = (a+|J| \ a+|J|+1 \ \cdots \ 
b)_{(k)}$. Then the order of $\D = \D_{\sigma,\beta}$ is $\ell(\beta) - 
\ell(\sigma) = |J|$. If we denote by $\partial_{k,i}$ the partial derivative 
with respect to $x_{k,i}$ then $\D$ is a homogeneous polynomial of degree 
$|J|$ in $\partial_{k,i}$ with $a \leq i \leq b$. Using the definition of 
Stanley-Postnikov differential operators from \cite{FGRZ18}*{paragraph 3.3}, 
we see that the coefficient of $\partial_{k,b}^{|J|}$ in $\D$ is precisely 
$\frac{(-1)^{|J|}}{|J|!}$. On the other hand, since $f_I$ is a rational 
function on the variables $x_{k,b}$ and $x_{k-1,j}$ with $(k-1,j) \in J$, we 
see that $\D(f_I) = \frac{(-1)^{|J|}}{|J|!}\partial_{k,b}^{|J|}(f_I)$. Thus
\begin{align*}
\D(f_I)(z)
	&= (-1)^{|J|}\frac{\displaystyle \prod_{(k-1,j) \notin J} m - z_{k-1,j}}
		{\displaystyle \prod_{(k,j) \notin I} z_{k,a} - z_{k,j}} \neq 0. 
\end{align*} 
Since this is the coefficient of $D_\sigma(z')$ in $E_{k+1,k} D_e(z)$, we are 
done.
\end{proof}

From this point on we identify $M(-\tilde\rho)$ with $U D_e(\mathbf 0)$. We 
have already presented the graph $\widetilde \Omega(\mathbf 0)$ for $n = 4$ in 
Example \ref{e:graphs}. In general, its edges are those of the form $[k,i] \to 
[k,i+1]$ for $1 \leq i \leq k-1$ and $[k,k] \to [k-1,1]$ for all $2 \leq k 
\leq n$. 

Let $v \in \ZZ^\mu_0$ be the element with $v_{k,i} = -i + 1$ for all $1 \leq i 
\leq k \leq n-1$. For example, for $n = 5$ we have

\medskip
\begin{tabular}{|c|c|}
\hline
$v$ & $\widetilde \Omega(v)$
\\
\hline
\begin{tikzpicture}

\node (51) at (-2,2.5) {$0$};
\node (52) at (-1,2.5) {$0$};
\node (53) at (0,2.5) {$0$};
\node (54) at (1,2.5) {$0$};
\node (55) at (2,2.5) {$0$};

\node (41) at (-1.5,2) {$0$};
\node (42) at (-0.5,2) {$-1$};
\node (43) at (0.5,2) {$-2$};
\node (44) at (1.5,2) {$-3$};

\node (31) at (-1,1.5) {$0$};
\node (32) at (0,1.5) {$-1$};
\node (33) at (1,1.5) {$-2$};

\node (21) at (-.5,1) {$0$};
\node (22) at (.5,1) {$-1$};

\node (11) at (0,0.5) {$0$};
\end{tikzpicture}
&
\tikzstyle{place}=[circle,draw=black,fill=black,
   inner sep=1pt,minimum size=1mm]
\begin{tikzpicture}
\node (51) at (-2,2.5) [place] {};
\node (52) at (-1,2.5) [place] {};
\node (53) at (0,2.5) [place] {};
\node (54) at (1,2.5) [place] {};
\node (55) at (2,2.5) [place] {};

\node (41) at (-1.5,2) [place] {};
\node (42) at (-0.5,2) [place] {};
\node (43) at (0.5,2) [place] {};
\node (44) at (1.5,2) [place] {};

\node (31) at (-1,1.5) [place] {};
\node (32) at (0,1.5) [place] {};
\node (33) at (1,1.5) [place] {};

\node (21) at (-.5,1) [place] {};
\node (22) at (.5,1) [place] {};

\node (11) at (0,0.5) [place] {};

\draw [->] 
	(51) edge (52) (52) edge (53) (53) edge (54) (54) edge (55) 
	(55) edge (41) (41) edge (31) (31) edge (42) (42) edge (32) (32) edge (43) 
	(43) edge (33) (33) edge (44)
	(31) edge (21) (21) edge (32) (32) edge (22) (22) edge (33)
	(21) edge (11) (11) edge (22) ;
\end{tikzpicture}\\
\hline
\end{tabular}
\\
\medskip

The edges of $\Omega^+(v)$ are precisely those of the form $[n,i] - [n-1,j]$ 
for all $i \in \interval{n}, j \in \interval{n-1}$, along with all $[k,i] - 
[k-1,j]$ for $1 \leq i \leq j \leq k \leq n - 1$. Recall from 
\S\ref{subsec-socle} that $\overline \P (\Omega^+(v))$ is the set of those $z 
\in \DD(\mathbf 0)$ such that $\Omega^+(v) \subset \Omega^+(z)$. We will prove 
that the support of $M(-\rho)$ is equal to the set of integral points 
$\overline \P (\Omega^+(v))$.

\begin{Theorem}\label{thm-supp Verma}
Let $z \in \DD(\mathbf 0)$. Then $z$ lies in the Gelfand-Tsetlin support of 
$M(-\rho)$ if and only if $z_{k-1,j} \leq z_{k,i} \leq 0$ for all $k \leq n-1$ 
and $i \leq j$. Equivalently, the support of $M(-\rho)$ coincides with 
$\overline \P(\Omega^+(v))$.
\end{Theorem}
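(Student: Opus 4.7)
My plan is to show $\supp M(-\rho) = \overline \P(\Omega^+(v))$. I begin by observing that the stated inequality condition is equivalent to the graph condition $\Omega^+(v) \subset \Omega^+(\mathbf 0 + z)$: the inter-row edges of $\Omega^+(v)$ are $[n,i]$--$[n-1,j]$ for all $i, j$, and $[k, i]$--$[k-1, j]$ for $i \leq j \leq k - 1$, and their inclusion in $\Omega^+(\mathbf 0 + z)$ translates (using $z_{n,i}=0$ and the $\DD$-normality $z_{k,1} \geq \cdots \geq z_{k,k}$) exactly into $z_{n-1,j} \leq 0$ and $z_{k-1,j} \leq z_{k,i}$ for $i \leq j$. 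The main reduction is then that $z \in \supp M(-\rho)$ if and only if $D_e(\mathbf 0 + z) \in M(-\rho)$: this is because $M(-\rho)[\mathbf 0 + z]$ is a $\Gamma$-submodule of $V(T(\mathbf 0))[\mathbf 0 + z]$ by Lemma \ref{L:sub-gt}, and by Proposition \ref{P:gt-weight-spaces}(c) the latter has $D_e(\mathbf 0 + z)$ as its unique $\Gamma$-eigenvector (up to scalar), so every nonzero $\Gamma$-submodule must contain this vector.

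For the inclusion $\overline \P(\Omega^+(v)) \subseteq \supp M(-\rho)$, Lemma \ref{L:omega-contained}(a) gives $D_e(\mathbf 0 + z) \in U \cdot D_e(v)$ whenever $\Omega^+(v) \subset \Omega^+(\mathbf 0 + z)$, so it suffices to prove $D_e(v) \in M(-\rho) = U \cdot D_e(\mathbf 0)$. I would establish this by an explicit construction: starting from $D_e(\mathbf 0)$, apply a carefully-ordered word in the simple lowering operators $E_{k+1, k}$ that decrements each entry $v_{k, i} = -i + 1$ down from $0$, proceeding from top to bottom and right to left. Using the explicit formulas of Theorem \ref{T:gt-big-module} and the non-vanishing criterion of Lemma \ref{L:zero-nonzero} at each step, one checks that the interval $I$ to be decremented in row $k$ has size strictly exceeding the matching block $J$ in row $k-1$, so the $D_e$-coefficient of the next tableau in the sequence is nonzero. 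The resulting word produces a vector whose $[\mathbf 0 + v]$-projection is a nonzero scalar multiple of $D_e(v)$, and by the unique-eigenvector property this forces $D_e(v) \in M(-\rho)$.

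For the reverse inclusion $\supp M(-\rho) \subseteq \overline \P(\Omega^+(v))$, I would argue via a Cartan-weight comparison. For each Cartan weight $\lambda$, Kostant's formula gives $\dim M(-\rho)_\lambda$ as the number of ways to write $-\rho - \lambda$ as a sum of positive roots of $\gl(n,\CC)$, while the Cartan weight decomposition of $M(-\rho)$ as a Gelfand-Tsetlin module gives
\[
\dim M(-\rho)_\lambda = \sum_{z \in \DD(\mathbf 0),\ h(\mathbf 0 + z) = \lambda} \dim M(-\rho)[\mathbf 0 + z].
\]
The strategy is to exhibit a combinatorial bijection identifying the right-hand sum restricted to $z \in \overline\P(\Omega^+(v))$ with Kostant's count; together with the $(\supseteq)$ inclusion, which yields $\dim M(-\rho)[\mathbf 0 + z] \geq 1$ for $z \in \overline\P(\Omega^+(v))$, this saturates Kostant's multiplicity and forces $\dim M(-\rho)[\mathbf 0 + z] = 0$ for every $z \notin \overline\P(\Omega^+(v))$.

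I expect the main obstacle to be the final combinatorial identity, since the Gelfand-Tsetlin multiplicities $\dim M(-\rho)[\mathbf 0 + z]$ vary with $z$ (they are bounded above by $|S_\pi^z|$, where $\pi = \pi(\mathbf 0)$, but often strictly smaller). Computing them explicitly will likely require matching each Kostant partition to a specific derivative tableau in $M(-\rho)[\mathbf 0 + z]$, a delicate combinatorial task. A secondary technical obstacle is verifying that the lowering word constructed in the $(\supseteq)$ direction indeed yields a nonzero $D_e(v)$-coefficient; this hinges on maintaining the strict block-size inequality $|J| < |I|$ throughout the chosen sweep.
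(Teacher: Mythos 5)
Your forward inclusion $\overline\P(\Omega^+(v)) \subseteq \supp M(-\tilde\rho)$ follows the same plan as the paper: reduce via Lemma~\ref{L:omega-contained}(\ref{i:e}) to showing $D_e(v) \in M(-\tilde\rho)$, then construct $D_e(v)$ from $D_e(\mathbf 0)$ by a word in the lowering operators, using Lemma~\ref{L:zero-nonzero} to certify nonvanishing at each step. The paper organizes this as an induction on the partial vectors $v(l)$ (setting rows above $l$ to zero) and applies the lemma row by row with $m = 0, -1, \ldots$; your ``top to bottom, right to left'' sweep is the same strategy, and the observation that $|J|<|I|$ (hence $|J|\leq|I|$) holds at each step is the same check. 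No issue there.

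Your reverse inclusion $\supp M(-\tilde\rho) \subseteq \overline\P(\Omega^+(v))$ is where a genuine gap appears. You propose a global Cartan-weight bookkeeping argument: compute $\dim M(-\tilde\rho)_\lambda$ by Kostant's formula, compare with $\sum_z \dim M(-\tilde\rho)[\mathbf 0 + z]$, and build a bijection that saturates Kostant's count using only $z \in \overline\P(\Omega^+(v))$. But this requires knowing the values $\dim M(-\tilde\rho)[\mathbf 0 + z]$ for $z$ inside the cone, which is precisely the information one does not have in advance; the forward direction only gives $\dim M(-\tilde\rho)[\mathbf 0 + z] \geq 1$, which is far from enough to saturate Kostant's multiplicities (which grow rapidly). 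Moreover, the paper's closing Remark shows $M(-\tilde\rho)$ does not even have a basis of derivative tableaux, so the GT multiplicities are strictly smaller than $|S_\pi^z|$ in general and are genuinely hard to determine. You flag this as ``the main obstacle'' yourself, but as written the step is unfinished and it is unclear it can be completed without essentially reproving a much harder structural statement.

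The paper's actual argument is far more localized. Assuming some $z \in \supp M(-\tilde\rho)$ violates the inequality, it does \emph{not} try to count anything global. Instead it uses Lemmas~\ref{L:omega-contained} and~\ref{L:zero-nonzero} to reduce to a single small violation: $z'' = -\delta^{k+1,k} - \delta^{k+1,k+1}$ would have to lie in $\supp M(-\tilde\rho)$. Then it computes the Cartan weight of $z''$, observes that $M(-\tilde\rho)$ (being free over $U^-$) has a one-dimensional Cartan weight space there spanned by $E_{k+2,k+1}^2 D_e(\mathbf 0)$, and a direct application of Lemma~\ref{L:zero-nonzero} shows this vector has zero projection onto the $z''$ GT component. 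That single-weight-space contradiction replaces your global bijection. I'd recommend adopting that localization: reduce the purported violation to a concrete minimal $z$, and compare dimensions only of the single Cartan weight space containing $M(-\tilde\rho)[\mathbf 0 + z]$.
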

\begin{proof}
We first show that $\overline \P(\Omega^+(v))$ is contained in the support
of $M(-\tilde\rho)$. By Lemma \ref{L:omega-contained} it is enough to show that
$D_e(v) \in M(-\tilde\rho)$. 

For each $l \in \interval{n-1}$, let $v(l)$ be the element of $\ZZ^\mu_0$ such 
that $v(l)_{k,i} = -i + 1$ if $k \leq l$, while $v(l)_k = 0$ if $k > l$; in 
other words, the $k$-th row of $v(l)$ is equal to that of $v$ if $k \leq l$ 
and equal to zero if $k > l$, in particular $v = v(n-1)$. We now show that 
$v(l)$ lies in the support of $M(-\tilde\rho)$ by induction on $l$, the case 
$v(1) = \mathbf 0$ being obvious. Suppose $v(l-1)$ lies in the support. Then 
row $l$ has exactly $l$ entries equal to $0$, while row $l-1$ has exactly one 
entry that equals $0$. If we apply Lemma \ref{L:zero-nonzero} $l-1$ times to 
rows $l$ and $l-1$ with $m = 0$ we obtain that $v^{(1)} = v(l-1) - 
(\delta^{l,2} + \delta^{l,3} + \cdots + \delta^{l,l})$ lies in the support of 
$M(-\tilde\rho)$. Applying now the same lemma $l-2$ times to the same rows for
$m = -1$, we see that $v^{(2)} = v^{(1)} - (\delta^{l,3} + \cdots + 
\delta^{(l,l)})$ also lies in the support of $M(-\tilde\rho)$. Repeating this 
argument for $m = -2, -3, \cdots, -l+1$, we see that 
\[
v^{(l-1)} = v(l-1) - (\delta^{2,l} + 2 \delta^{3,l} + \cdots + (l-1) 
\delta^{l,l})
\] 
is in the support of $M(-\tilde\rho)$. Thus $v = v(n-1)$ lies in the support.

We now prove that the support of $M(-\tilde \rho)$ is contained in 
$\P(\Omega^+(v))$. Suppose there exists $z$ in the support of $M(-\tilde\rho)$ 
outside of $\overline \P(\Omega^+(v))$, that is, that there exists 
some $j \leq i$ such that $z_{k,i} > z_{k+1,j}$. Since $z \in \DD(\mathbf 0)$ 
we must have $z_{k+1,j} \geq z_{k+1,i}$, and hence $z_{k,i} > z_{k+1,l}$ for 
all $l \geq i$. We now show that we can assume $i = k$.

Suppose $i < k$ and let $w \in \ZZ^\mu_0$ be the element given by
\begin{align*}
w_l &=
\begin{cases}
	(0^{(l)}) & k+2 \leq l \leq n \\ 
	(0^{(i-1)}, -1^{(k-i+2)}) & l = k+1 \\
	(0^{(i)}, -1^{(k-i)}) & l = k \\
	(-1^{(l)}) & 1 \leq l \leq k-1
\end{cases} 
\end{align*} 
where $m^{(j)}$ denotes a sequence of $j$ consecutive entries equal to $m$. 
Then $\Omega^-(w) \subset \Omega^-(z)$ and hence by Lemma 
\ref{L:omega-contained} we have $D_e(w) \in M(-\tilde\rho)$. Applying Lemma 
\ref{L:zero-nonzero} twice to rows $k+1, k$ with $m = -1$, we see that $z' = 
w - \delta^{k+1,k+1} - \delta^{k+1,k}$ also belongs to the support of 
$M(-\tilde\rho)$. Direct inspection shows that $\Omega^-(-\delta^{k+1,k}
-\delta^{k+1,k+1}) \subset \Omega^-(w')$, so $z'' = -\delta^{k+1,k}
-\delta^{k+1,k+1}$ also lies in the support of $M(-\tilde\rho)$, or 
equivalently $D_e(z') \in M(-\tilde \rho)$.

Now any element in the Gelfand-Tsetlin component of weight $z''$ has Cartan 
weight $\tilde\rho -2 \alpha$, where $\alpha$ is the Cartan weight of 
$E_{k+2,k+1}$. The only such elements in $M(-\tilde\rho)$ are the scalar 
multiples of $E_{k+2,k+1}^2 D_e(\mathbf 0)$. Applying Lemma 
\ref{L:zero-nonzero} twice to $D_e(\mathbf 0)$ we see that the projection of 
this element to the $z''$-Gelfand-Tsetlin weight component is zero. This is a 
contradiction, which arose from assuming the existence of $z$ in the support 
of $M(-\tilde\rho)$ but not in $\overline \P(\Omega^+(v))$.
\end{proof}

\paragraph
\about{The essential support of $M(-\tilde\rho)$}
Let $w \in \DD(\mathbf 0)$ be the element given by $w_{n,i} = 0$ and $w_{k,i} 
= w_{k+1, k+1} - i + 1$ for all $k \leq n$. For example if $n = 5$ then
\\

\begin{tikzpicture}
\node (z) at (-2.5,1.25) {$w = $};

\node (51) at (-1.5,2) {$0$};
\node (52) at (-0.5,2) {$0$};
\node (53) at (0.5,2) {$0$};
\node (54) at (1.5,2) {$0$};
\node (55) at (2.5,2) {$0$};

\node (41) at (-1,1.5) {$0$};
\node (42) at (0,1.5) {$-1$};
\node (43) at (1,1.5) {$-2$};
\node (44) at (2,1.5) {$-3$};

\node (31) at (-.5,1) {$-3$};
\node (32) at (0.5,1) {$-4$};
\node (33) at (1.5,1) {$-5$};

\node (22) at (0,0.5) {$-5$};
\node (21) at (1,0.5) {$-6$};

\node (11) at (0.5,0) {$-6$};
\end{tikzpicture}

\medskip

Clearly, $\Omega^+(w) = \Omega^+(\mathbf 0)$ and hence 
$$
\dim M(-\tilde\rho)[w] = |S_\pi| = (n-1)!(n-2)! \cdots 2! 1!,
$$
thus showing $w$ lies in the essential support of $D_e(\mathbf 0)$. In fact, 
we have even a stronger statement, namely, that the essential support of 
$M(-\tilde\rho)$ is equal to the rational cone $\overline \P(\Omega^+(w))$. 
As in our study of the support, we need a preliminary lemma; again, we denote
by $m^{(j)}$ a sequence of $j$ consecutive entries equal to $m$.
\begin{Lemma}
\label{L:pre-essential}
Let $1 \leq s < k \leq n-2$, and let $z \in \DD(\mathbf 0)$ be such that 
$z_{k+1} = (0^{(k)}, -1)$ and $z_k = (0^{(s-1)}, -1^{(k-s+1)})$. If $z$ 
lies in the essential support of $M(-\tilde\rho)$ then so does $z + 
\delta^{k,s}$.
\end{Lemma}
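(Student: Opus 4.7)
The plan is to show $M(-\tilde\rho)[u] = V(T(\mathbf 0))[u]$ for $u = z + \delta^{k,s}$, which by Proposition \ref{P:gt-weight-spaces}(\ref{i:generator}) reduces to producing an element of $M(-\tilde\rho)[u]$ whose expansion in the basis $\{D_\rho(u) \mid \rho \in S_\pi^u\}$ has nonzero coefficient on the top tableau $D_{\omega_0^u}(u)$. Once that is in hand, the Futorny--Ovsienko bound is attained, $\dim M(-\tilde\rho)[u] = |S_\pi^u|$, and $u$ lies in $\ess(\mathbf 0)$.

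Setting $I = \interval{s,k}_k \in \II(\mathbf 0, z)$, Lemma \ref{L:omega-delta}(\ref{i:omega-delta}) produces $\sigma \in S_\pi^z$ with $\omega_0^u = \sigma\alpha(I)$. Since $z \in \ess(\mathbf 0)$, the entire component $V(T(\mathbf 0))[z]$ is already contained in $M(-\tilde\rho)$, so every tableau $D_\tau(z)$ with $\tau \in S_\pi^z$ is at our disposal. Applying $E_{k,k+1}$ and projecting to weight $u$, Theorem \ref{T:gt-big-module} gives
\[
p_u(E_{k,k+1}D_\tau(z)) = -\sum_{\rho \leq \tau\alpha(I)} \D_{\rho,\tau\alpha(I)}^{z}(e_I)\,D_\rho(u) \;\;\in\;\; M(-\tilde\rho),
\]
where only $u$-shuffles $\rho$ contribute. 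A direct inspection shows $e_I(z) = 0$: the factor $(x_{k,s} - x_{k+1,k+1})$ vanishes at $z$ (both entries equal $-1$), while every other factor in numerator and denominator of $e_I$ evaluates to a nonzero scalar. Writing $e_I = (x_{k,s} - x_{k+1,k+1})\,g$ with $g(z) \neq 0$, the surviving Postnikov--Stanley contributions are of positive order, computable explicitly in the style of Lemma \ref{L:zero-nonzero}.

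The principal obstacle is that the choice $\tau = \sigma$ yields only strictly lower-Bruhat contributions at $u$, since the leading coefficient $e_I(z)$ vanishes, and the $\Gamma$-action on $V(T(\mathbf 0))[u]$ never raises the Bruhat index by Proposition \ref{P:gt-weight-spaces}(\ref{i:action}). To obtain a nonzero multiple of the top tableau $D_{\omega_0^u}(u)$, the key step is to choose $\tau \in S_\pi^z$ for which $\tau\alpha(I)$ strictly dominates $\omega_0^u$ in the Bruhat order of $S_\pi$, even when $\tau\alpha(I)$ is not itself a $u$-shuffle; the rigid shapes imposed on $z_{k+1} = (0^{(k)},-1)$ and $z_k = (0^{(s-1)},-1^{(k-s+1)})$ are precisely what should guarantee the existence of such a $\tau$. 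For such $\tau$, the coefficient $\D_{\omega_0^u,\tau\alpha(I)}^z(e_I)$ is a positive-order Postnikov--Stanley derivative applied to $(x_{k,s} - x_{k+1,k+1})\,g$ at $z$, and an explicit evaluation using $g(z) \neq 0$ should show it is nonzero. The heart of the argument is this combinatorial selection of the right $\tau$, together with the subsequent derivative calculation, and it is the step I expect to be the most delicate.
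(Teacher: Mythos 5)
Your outline matches the paper's strategy: reduce to producing an element of $M(-\tilde\rho)[u]$ whose expansion hits the top tableau $D_{\omega_0^u}(u)$, obtain such elements by applying $E_{k,k+1}$ to derivative tableaux of weight $z$ (all of which are available since $z\in\ess$), and observe that the leading coefficient $e_I(z)$ vanishes to first order because $x_{k,s}-x_{k+1,k+1}$ vanishes at $z$ while all other factors do not. You also correctly identify the resulting obstruction — the $z$-shuffle $\sigma$ furnished by Lemma \ref{L:omega-delta}(\ref{i:omega-delta}), for which $\sigma\alpha(I)=\omega_0^u$, contributes only a zero — and the correct remedy, namely to choose a different $z$-shuffle $\tau$ with $\tau\alpha(I)$ sitting strictly above $\omega_0^u$, so that the coefficient of $D_{\omega_0^u}(u)$ becomes a positive-order Postnikov--Stanley derivative of $e_I$ rather than its value.

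What is missing is precisely the content of the lemma. You write that the rigid shapes of $z_k$ and $z_{k+1}$ ``should guarantee the existence of such a $\tau$'' and that an explicit evaluation ``should show'' the resulting coefficient is nonzero, and you flag this as ``the step I expect to be the most delicate.'' This delicate step is exactly what the paper supplies: it writes down a concrete $z$-shuffle $\sigma$ (agreeing with $\omega_0^u$ off level $k$ and given by a closed formula at level $k$), verifies the shuffle condition, shows $\sigma\alpha(I)$ differs from $\omega_0^u$ by exactly the simple transposition $(s\ s{+}1)_{(k)}$ so that $\D_{\omega_0^u,\sigma\alpha(I)}$ is the first-order operator $\partial_{k,s}-\partial_{k,s+1}$, and then computes $\D_{\omega_0^u,\sigma\alpha(I)}(e_I)(z)=(-1)^{k-s+1}$ by the factorization $e_I=(x_{k,s}-x_{k+1,k+1})\,\tilde e_I$ with $\tilde e_I(z)\ne 0$. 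Without producing the shuffle and doing this evaluation, the argument is a plan rather than a proof; the two ``should''s in your last paragraph are where the actual work lives, and they are not routine to discharge (in particular, one must check that $\sigma\alpha(I)$ sits \emph{one} Coxeter step above $\omega_0^u$, not merely somewhere above, so that a first-order derivative suffices against the first-order zero of $e_I$).
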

\begin{proof}
Set $z' = z + \delta^{k,s}$, and set $I = \interval{s+1,k}_{(k)}$. We put
$\omega = \omega_0^{z'}$ and $\sigma = \omega (s \ s+1)_{(k)} \beta(I)$. An 
explicit calculation shows that
\begin{align*}
\sigma_{(k)}(i)
	&= 
	\begin{cases}
		i + k - s -1 & \mbox{ if } 1 \leq i < s;\\
		k & \mbox{ if } i = s; \\
		i - s & \mbox{ if } s < i < k; \\
		k - 1 & \mbox{ if } i = k; 
	\end{cases}
\end{align*}
while $\sigma_{(l)} = \omega_{(l)}$ for $l\neq k$, so $\sigma$ is a 
$z$-shuffle. By definition, $\sigma \alpha(I) = \omega (s \ s+1)$, thus 
$\D_{\omega,\sigma\alpha(I)} = \partial_{k,s} - \partial_{k,s+1}$. 

Since $z$ lies in the essential support of $M(-\tilde\rho)$, we know that 
$D_\sigma(z) \in M(-\tilde\rho)$. Using the formulas in Theorem 
\ref{T:gt-big-module}, we see that the coefficient of $D_\omega(z')$ in 
$E_{k,k+1} D_\sigma(z)$ is $(\partial_{k,s}- \partial_{k,s+1})(e_I)$ evaluated 
at $z$. Putting $\tilde e_I = \frac{e_I}{x_{k,s} - x_{k+1,k+1}}$ then 
\begin{align*}
\mathfrak D_{\omega, \sigma\alpha(I)}(e_I)
&=(x_{k,s} - x_{k+1,k+1}) \left(
	\partial_{k,s}-\partial_{k,s+1}
\right)(\tilde e_I) 
- \tilde e_I,
\end{align*}
which evaluates to $(-1)^{k-s+1}$ at $z$. By Proposition 
\ref{P:gt-weight-spaces}(\ref{i:generator}) $p_{z'}(E_{k,k+1}D_\sigma(z))$ 
generates $V(T(\mathbf 0))[z']$ and hence $z'$ lies in the essential support 
of $M(-\tilde\rho)$.
\end{proof}

\begin{Theorem}
\label{thm-esssup Verma}
Let $(k,i) \in \Sigma'$, and denote by $c^{k,i}$ the element in $\ZZ^\mu_0$ 
for which $c^{k,i}_{l,j}$ is $0$ if $l > k$ or if $l = k$ and $i < j$, and 
$-1$ otherwise. Then $\P(\Omega^+(\mathbf{0})) = \sum_{1 \leq i \leq k \leq 
n-1} \NN c^{k,i}$, and $\P(\Omega^+(\mathbf{0}))$ coincides with the essential 
support of $M(-\tilde\rho)$. 
\end{Theorem}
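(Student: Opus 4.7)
The first assertion specializes the proof of Theorem \ref{T:essupp} to $\vv = \mathbf 0$: that argument expresses $\P(\Omega^+(\vv))$ as the $\NN$-monoid generated by elements whose $-1$'s sit at the coordinates reachable from $[k,i]$ via directed paths in $\overrightarrow\Omega(\vv)$. For $\vv = \mathbf 0$ the reduced graph $\widetilde\Omega(\mathbf 0)$ has only the edges $[k,i] \to [k,i+1]$ for $i<k$ and $[k,k] \to [k-1,1]$ for $k \geq 2$, and directed paths from $[k,i]$ reach exactly $\{(k,j) : i \leq j \leq k\} \cup \{(l,j) : l < k\}$, matching the $c^{k,i}$ in the statement.

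For the inclusion $\P(\Omega^+(\mathbf 0)) \subseteq \ess(\mathbf 0)$, take $z \in \P(\Omega^+(\mathbf 0))$. Then $\Omega^+(\mathbf 0+z) = \Omega^+(\mathbf 0) = \Omega^+(\mathbf 0 + 0)$, and Lemma \ref{L:omega-contained}(\ref{i:omega}), applied with the lemma's $z$ equal to $0$ and its $y$ equal to ours, places $D_{\omega_0^z}(\mathbf 0+z)$ inside $U D_e(\mathbf 0) = M(-\tilde\rho)$. Proposition \ref{P:gt-weight-spaces}(\ref{i:generator}) then shows that this tableau generates the entire component $V(T(\mathbf 0))[\mathbf 0+z]$ under $\Gamma$, so $M(-\tilde\rho)[\mathbf 0+z] = V(T(\mathbf 0))[\mathbf 0+z]$ has dimension $|S_\pi^z|$, saturating the Futorny--Ovsienko bound.

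The reverse inclusion $\ess(\mathbf 0) \subseteq \P(\Omega^+(\mathbf 0))$ is the main obstacle. Assume $z \in \ess(\mathbf 0) \setminus \P(\Omega^+(\mathbf 0))$: then there exist $(l,i),(l-1,j) \in \Sigma$ with $z_{l,i} < z_{l-1,j}$, and Theorem \ref{thm-supp Verma} excludes $l = n$ and forces $i > j$ (otherwise $z$ would leave $\supp M(-\tilde\rho)$). I plan to derive a contradiction by exhibiting some $\sigma \in S_\pi^z$ with $D_\sigma(\mathbf 0+z) \notin M(-\tilde\rho)$. The strategy is to combine the explicit action of Theorem \ref{T:gt-big-module} with Lemma \ref{L:pre-essential}, using the latter to propagate essential-support membership across one-step moves inside any cone $\overline\P(\Omega)$ appearing in $\mathcal E_0$, and reducing by induction on the number of flipped edges to the case of a single violation. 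At that boundary one traces $D_{\omega_0^z}(\mathbf 0 + z)$ back along an application of $E_{l-1,l}$ or $E_{l,l-1}$ to some $D_\tau(\mathbf 0 + y)$ with $y \in \P(\Omega^+(\mathbf 0))$; the relevant Postnikov--Stanley coefficient $\D_{\omega,\sigma\alpha(I)}(e_I)$ or $\D_{\omega,\sigma\beta(I)}(f_I)$ is forced to vanish because the flipped edge raises the zero-order of $e_I$ or $f_I$ at $\mathbf 0 + y$ above the differential order of the operator, yielding the desired $\sigma$ outside $U D_e(\mathbf 0)$. Controlling these Postnikov--Stanley cancellations uniformly over the possible choices of violated edge is the hard combinatorial step.
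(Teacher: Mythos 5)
The description of $\P(\Omega^+(\mathbf 0))$ as an $\NN$-monoid and the forward inclusion $\P(\Omega^+(\mathbf 0)) \subseteq \ess(\mathbf 0)$ are correct and match the paper (the latter via Lemma~\ref{L:omega-contained}(\ref{i:omega}) with the lemma's $z=0$, noting $\omega_0^0 = e$ since $(S_\pi)_0 = S_\pi$, and then Proposition~\ref{P:gt-weight-spaces}(\ref{i:generator})).

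The reverse inclusion, however, is only a sketch and the proposed strategy is not the one the paper uses, and it is unlikely to work as described. You plan to exhibit a specific derivative tableau $D_\sigma(\mathbf 0 + z)$ lying outside $M(-\tilde\rho)$ by tracing back one application of $E_{l\pm 1,l}$ and showing a single Postnikov--Stanley coefficient such as $\D_{\omega,\sigma\alpha(I)}(e_I)$ vanishes. This is insufficient for two reasons. First, the vanishing of that one coefficient only shows that \emph{one particular} element of $U D_e(\mathbf 0)$ fails to hit $D_\omega(\mathbf 0+z)$; it does not rule out other elements of $M(-\tilde\rho)$ projecting to a cyclic vector of $V(T(\mathbf 0))[\mathbf 0+z]$. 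Second, and more seriously, the closing Remark of the paper shows that the Gelfand--Tsetlin components of $M(-\tilde\rho)$ are \emph{not} spanned by derivative tableaux (e.g.\ for $\gl_4$ and $z = -\delta^{1,1}-\delta^{2,2}-\delta^{3,3}$ one has $E_{4,3}E_{3,2}E_{2,1}D_e(\mathbf 0)$ equal to a combination of five derivative tableaux, yet $\dim M(-\tilde\rho)[z]\le 4$), so a tableau-by-tableau leading-coefficient argument cannot by itself control $\dim M(-\tilde\rho)[\mathbf 0+z]$.

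The paper's actual argument for $\ess(\mathbf 0)\subseteq \P(\Omega^+(\mathbf 0))$ is different: it takes a hypothetical $z\in\ess(\mathbf 0)\setminus\P(\Omega^+(\mathbf 0))$, picks the minimal $k$ for which a bundle of edges $\mathcal V(k,i)$ lies in $\Omega^-(z)$, uses Lemma~\ref{L:omega-contained} to pass to a clean representative, and then repeatedly applies Lemma~\ref{L:pre-essential} (which propagates essential-support membership, not vanishing) to push the offending configuration down to the base cases $k=1$ and $k=2$. For those base cases it compares the forced Gelfand--Tsetlin multiplicity $|S_\pi^z|$ ($2$ and $6$, respectively) against the dimension of the enclosing \emph{Cartan} weight space of $M(-\tilde\rho)$, which is small because $M(-\tilde\rho)$ is a free $U^-$-module ($1$ and $4$, respectively), yielding the contradiction. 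This global dimension count via the $U^-$-freeness of the Verma module is the essential idea that your outline is missing.
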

\begin{proof}
The description of $\P(\Omega^+(\mathbf{0}))$ given in the proof of 
Theorem \ref{T:essupp}, so we just have to show that it is equal to the 
essential support of $M(-\tilde\rho)$.

We note first that the edges of $\Omega = \Omega^+(\mathbf 0)$ are the 
edges of the form $[k+1,j] - [k,i]$ for $j \in \interval{k+1}$ and $i \in 
\interval{k}$. Take $z \in \DD(\mathbf 0)$. Then $z$ lies outside 
$\P(\Omega)$ if and only if $\Omega^-(z)$ has at least one edge, say 
$[k,i] - [k+1,j]$, or equivalently if and only if $z_{k,i} > z_{k+1,j}$. Since 
$z_{k,i} \leq 0$ for all $k \leq n-1$, we must have $k+1 < n$. Furthermore, 
since $z$ is in normal form we have $z_{k,l} \geq z_{k,i} > z_{k+1,k+1}$, so 
every edge in $\mathcal V(k,i) = \{[k,l] - [k+1,k+1] \mid l \leq i\}$ is an 
edge of $\Omega^-(z)$. By Lemma \ref{L:omega-contained} if $z$ is in the 
essential support of $M(-\tilde\rho)$ then so is any element $w$ such that the 
edges of $\Omega^-(w)$ are contained in $\mathcal V(k,i)$.

Assume there is an element $z \in \DD(\mathbf 0)$ which is in the essential 
support of $M(-\tilde\rho)$ but not in $\P(\Omega)$ and let $k$ be minimal 
such that some $\mathcal V(k,i)$ is contained in the edge set of $\Omega^-(z)$.
Suppose first that $k = 1$. Then $\mathcal V(1,1)$ is the set of edges of 
$\Omega^-(-\delta^{2,2})$, and hence $-\delta^{2,2}$ lies in the essential 
support of $M(-\tilde\rho)$ and the Gelfand-Tsetlin component of this weight 
has dimension $2$. The Cartan weight of any element in this component is 
$-\alpha - \tilde\rho$, where $\alpha$ is the Cartan weight of $E_{2,3}$, and 
since $M(-\tilde\rho)$ is a free $U^-$-module and a highest weight module with 
highest weight $-\tilde\rho$, it contains only one element with this weight, 
namely $E_{3,2} D_e(\mathbf 0)$, a contradiction. Hence $k \geq 2$.

Let $(k,s) \in \Sigma$ and denote by $z(k,s)$ the element given by
\begin{align*}
z(k,s)_l
	&= \begin{cases}
	 (0^{(l)}) & \mbox{ for } k+2 \leq l \leq n;\\
	 (0^{(k)}, -1) & \mbox{ for } l = k+1; \\
	 (0^{(s)}, -1^{(k-s)}) & \mbox{ for } l = k; \\
	 (-1^{(l)}) & \mbox { for } 1 \leq l \leq k-1; 
	\end{cases} 
\end{align*} 
Applying $k-s$ times Lemma \ref{L:pre-essential} to rows $k, k+1$
we see that $z(k,k-1)$ also lies in the essential support of $M(-\tilde\rho)$. 
Now if $k > 2$ then we can also apply Lemma \ref{L:pre-essential} to $z(k,k-1)$
in rows $k-1, k$ to obtain an element $z'$ in the essential support of 
$M(-\tilde\rho)$ such that $z'_{k-1,1} > z'_{k,k}$, contradicting the 
minimality of $k$. Thus we must have $k = 2$ and $z(2,1) = - \delta^{3,3} - 
\delta^{2,2} - \delta^{1,1}$ must lie in the essential support of $M(-\tilde
\rho)$. In particular this implies that the component of Gelfand-Tsetlin 
weight $z(2,1)$ of $M(-\tilde\rho)$ must be of dimension $6$.

On the other hand, any vector of Gelfand-Tsetlin weight $z(2,1)$ must have
Cartan weight $-\alpha_{1,4} -\tilde\rho$, where $\alpha_{1,4}$ is the Cartan 
weight of $E_{1,2}E_{2,3}E_{3,4}$. However the space of weight $-\alpha_{1,4}$ 
in $U^-$ is four dimensional, which implies that the dimension of the 
Gelfand-Tsetlin component of weight $z(2,1)$ of $M(-\tilde\rho)$ can be at 
most four, a contradiction.
\end{proof}

\begin{Example}
If $n = 4$ then the support of $M(-\tilde\rho)$ consists of all tableaux
\\

\begin{tikzpicture}
\node (41) at (-1,.5) {$0$};
\node (42) at (0,.5) {$0$};
\node (43) at (1,.5) {$0$};
\node (44) at (2,.5) {$0$};
\node (31) at (-.5,0) {$a$};
\node (32) at (0.5,0) {$b$};
\node (33) at (1.5,0) {$c$};
\node (22) at (0,-0.5) {$d$};
\node (21) at (1,-0.5) {$e$};
\node (11) at (0.5,-1) {$f$};

\node () at (3.5,-0.25) {such that};

\node () at (6,.5) {$0 \geq a \geq b \geq c$};
\node () at (6,0) {$d \geq e$};
\node () at (6,-.5) {$a \geq d \geq f$};
\node () at (6,-1) {$b \geq e$};
\end{tikzpicture}

On the other hand, the essential support of $M(-\tilde\rho)$ consists of those tableaux such that $0 
\geq a \geq b \geq c \geq d \geq e \geq f$.
\end{Example}

We conclude this paper with the observation that the simple Verma modules of 
$\mathfrak{gl}(n,\CC)$ ($n\geq 4$), considered as Gelfand-Tsetlin modules, do 
not have necessarily a basis of derivative tableaux.
\begin{Remark} Let $z = - \delta^{1,1} - \delta^{2,2} - \delta^{3,3}$. 
A consecutive application of the Gelfand-Tsetlin formulas shows that
\begin{align*}
E_{4,3} E_{3,2} E_{2,1} D_e(\mathbf 0) 
	&= -D_e(z) + D_{(32)_{(3)}}(z) - D_{(123)_{(3)}}(z) + \\
		& \qquad D_{(12)_{(2)}}(z)- D_{(12)_{(2)}(32)_{(3)}}(z).
\end{align*}
As mentioned above, the Gelfand-Tsetlin component of $M(-\tilde\rho)$ of 
weight $z$ has dimension at most $4$. Hence the Verma module $M(-\tilde\rho)$ 
does not have a basis formed by derivative tableaux. It is an interesting open 
question to find a basis of the Verma modules, even of $M(-\tilde\rho)$, 
inside $V(T(\vv))$.
\end{Remark}

\begin{bibdiv}
\begin{biblist}\bib{BB05}{book}{
 author={Bj{\"o}rner, Anders},
 author={Brenti, Francesco},
 title={Combinatorics of Coxeter groups},
 series={Graduate Texts in Mathematics},
 volume={231},
 publisher={Springer},
 place={New York},
 date={2005},
 pages={xiv+363},
}

\bib{EMV18}{article}{
 author={Early, Nick},
 author={Mazorchuk, Volodymir},
 author={Vyshniakova, Elizabetha},
 title={Canonical Gelfand-Zeitlin modules over orthogonal Gelfand-Zeitlin algebras},
 note={Preprint, available online at \url {https://arxiv.org/abs/1709.01553}},
}

\bib{FGR15}{article}{
 author={Futorny, Vyacheslav},
 author={Grantcharov, Dimitar},
 author={Ramirez, Luis Enrique},
 title={Irreducible Generic Gelfand-Tsetlin modules of ${\mathfrak {gl}}(n)$},
 journal={SIGMA},
 volume={18},
 date={2015},
 pages={},
}

\bib{FGR16}{article}{
 author={Futorny, Vyacheslav},
 author={Grantcharov, Dimitar},
 author={Ramirez, Luis Enrique},
 title={Singular Gelfand-Tsetlin modules of ${\mathfrak {gl}}(n)$},
 journal={Adv. Math.},
 volume={290},
 date={2016},
 pages={453--482},
}

\bib{FGR17a}{article}{
 author={Futorny, Vyacheslav},
 author={Grantcharov, Dimitar},
 author={Ramirez, Luis Enrique},
 title={New Singular Gelfand-Tsetlin modules of ${\mathfrak {gl}}(n)$ of index $2$},
 journal={Comm. Math. Phys.},
 volume={355},
 date={2017},
 number={3},
 pages={1209--1241},
}

\bib{FGR17b}{article}{
 author={Futorny, Vyacheslav},
 author={Grantcharov, Dimitar},
 author={Ramirez, Luis Enrique},
 title = {Drinfeld category and the classification of singular Gelfand-Tsetlin $\displaystyle \mathfrak{gl}_n$-modules},
journal = {Int. Math. Res. Not.},
volume = {},
date = {2017},
number = {},
pages = {rnx159},
URL = { + http://dx.doi.org/10.1093/imrn/rnx159},
eprint = {/oup/backfile/content_public/journal/imrn/pap/10.1093_imrn_rnx159/1/rnx159.pdf}
}

\bib{FGRZ18}{article}{
 author={Futorny, Vyacheslav},
 author={Grantcharov, Dimitar},
 author={Ramirez, Luis Enrique},
 author={Zadunaisky, Pablo},
 title={Gelfand-Tsetlin theory for rational Galois algebras},
 note={Preprint, available  online at \url {https://arxiv.org/abs/1801.09316}},
 year={2018},
}

\bib{FO14}{article}{
 author={Futorny, Vyacheslav},
 author={Ovsienko, Serge},
 title={Fibers of characters in Gelfand-Tsetlin categories},
 journal={Trans. Amer. Math. Soc.},
 volume={366},
 date={2014},
 number={8},
 pages={4173--4208},
}

\bib{FRZ16a}{article}{
 author={Futorny, Vyacheslav},
 author={Ramirez, Luis Enrique},
 author={Zhang, Jian},
 title={Combinatorial construction of Gelfand-Tsetlin modules for $\gl _n$},
 journal={Adv. Math.},
 volume={343},
 date={2019},
 pages={681--711},
}

\bib{Hartwig18}{article}{
 author={Hartwig, Johnas},
 title={Principal Galois Orders and Gelfand-Zeitlin modules},
 year={2017},
  note={Preprint, available online at \url {https://arxiv.org/abs/1710.04186}}
}

\bib{Humphreys08}{book}{
 author={Humphreys, James E.},
 title={Representations of semisimple Lie algebras in the BGG category
 $\scr O$},
 series={Graduate Studies in Mathematics},
 volume={94},
 publisher={American Mathematical Society, Providence, RI},
 date={2008},
 pages={xvi+289},
}

\bib{KTWWY18}{article}{ 
 author={Kamnitzer, Joel}, 
 author={Tingley, Peter}, 
 author={Webster, Ben}, 
 author={Weekes, Alex}, 
 author={Yacobi, Oded}, 
 title={On category $\mathcal O$ for affine Grassmannian slices and categorified tensor products},
 year={2018},
 note={Preprint, available online at \url {https://arxiv.org/abs/1806.07519}},
 }

\bib{MV18}{article}{
 author={Mazorchuk, Volodymir},
 author={Vyshniakova, Elizabetha},
 title={Harish-Chandra modules over invariant subalgebras in a skew-group ring},
 note={Preprint, available online at \url {https://arxiv.org/abs/1811.00332}},
}

\bib{RZ18}{article}{
 author={Ram\'\i rez, L. E.},
 author={Zadunaisky, P.},
 title={Gelfand-Tsetlin modules over $\mathfrak {gl}(n)$ with arbitrary characters},
 journal={J. Algebra},
 volume={502},
 date={2018},
 pages={328--346},
}

\bib{Vis17}{article}{
 author={Vishniakova, Elizaveta},
 title={Geometric approach to $p$-singular Gelfand-Tsetlin $\gl (n)$-modules},
 note={Preprint, available online at \url {https://arxiv.org/abs/1705.05793}},
}

\bib{Vishnyakova18}{article}{
 author={Vishnyakova, Elizaveta},
 title={A Geometric approach to $1$-singular Gelfand-Tsetlin $\mathfrak {gl}_n$-modules},
 journal={Differ. Geom. Appl.},
 volume={56},
 date={2018},
 pages={155--160},
}

\bib{Zad-1-sing}{article}{
 author={Zadunaisky, Pablo},
 title={A new way to construct 1-singular Gelfand-Tsetlin modules},
 journal={Algebra Discrete Math.},
 volume={23},
 date={2017},
 number={1},
 pages={180--193},
}

\end{biblist}
\end{bibdiv}

\end{document}